\documentclass{amsart}

%Packages
\usepackage{amssymb}
%\usepackage{tensor}
%\usepackage{graphicx}
%	\graphicspath{ {./images/} }
\usepackage{hyperref}
\hypersetup{
    %colorlinks,
    citecolor=blue,
    filecolor=blue,
    linkcolor=blue,
    urlcolor=blue
}
%\usepackage{showkeys}		%displays label names

%Theorem Style Environments
\newtheorem{thm}{Theorem}[section]
\newtheorem{prop}[thm]{Proposition}
\newtheorem{lem}[thm]{Lemma}
\newtheorem{cor}[thm]{Corollary}

\newtheorem{claim}[thm]{Claim}

\newtheorem{assumption}[thm]{Assumption}

%Definition Style Environments
\theoremstyle{definition}
\newtheorem{definition}[thm]{Definition}

%Remark Style Environments
\theoremstyle{remark}
\newtheorem{remark}[thm]{Remark}

%Number Equations within Sections
\numberwithin{equation}{section}

%New Commands
\newcommand{\R}{\mathbb{R}}  % The real numbers.
\newcommand{\N}{\mathbb{N}}  % The natural numbers.
\newcommand{\Sph}{\mathbb{S}}  % The sphere
\renewcommand{\phi}{\varphi}
 %put a tilde on it!
\newcommand{\ol}[1]{\overline{#1}} %put an overline on it!
 %put an underline on it!
\newcommand{\cl}[1]{\mathcal{#1}} %mathcal shortcut
\newcommand{\C}{\mathbf{C}}  % cone shortcut
\renewcommand{\subset}{\subseteq}		%change subset notation

%measure restriction symbol
\newcommand{\mres}{\mathbin{\vrule height 1.6ex depth 0pt width 0.13ex\vrule height 0.13ex depth 0pt width 1.3ex}}

\DeclareMathOperator{\dist}{dist}		%distance
\DeclareMathOperator{\spt}{spt}		%support
		%regular points
\DeclareMathOperator{\supp}{supp} 		%support

		%divergence
\DeclareMathOperator{\tr}{tr}		%trace
\DeclareMathOperator*{\esssup}{ess\,sup}	%essential supremum

%Title Information
\title[On Singularities of Mean Curvature Flows with $H$ Bounds]{On the Structure of Singularities of Weak Mean Curvature Flows with Mean Curvature Bounds}
\author{Maxwell Stolarski}
\address{Warwick Mathematics Institute, University of Warwick}
\email{Max.Stolarski@warwick.ac.uk}
\urladdr{https://homepages.warwick.ac.uk/~u2175999/}

%%%%%%%%%%%%%%%%%%%%%%%%%%%%%%%%%%%%%%%%%%%%%%%%%%%%%%%%%%
\begin{document}
%%%%%%%%%%%%%%%%%%%%%%%%%%%%%%%%%%%%%%%%%%%%%%%%%%%%%%%%%%

\maketitle 

\begin{abstract}
	This paper studies singularities of mean curvature flows with integral mean curvature bounds $H \in L^\infty L^p_{loc}$ for some $p \in ( n, \infty]$.
	For such flows, any tangent flow is given by the flow of a stationary cone $\mathbf{C}$.
	When $p = \infty$ and $\C$ is a regular cone, we prove that the tangent flow is unique.
	These results hold for general integral Brakke flows of arbitrary codimension in an open subset $U \subset \mathbb{R}^N$ with $H \in L^\infty L^p_{loc}$.
	For smooth, codimension one mean curvature flows with $H \in L^\infty L^\infty_{loc}$, we also show that, at points where a tangent flow is given by an area-minimizing Simons cone, there is an accompanying limit flow given by a smooth Hardt-Simon minimal surface.
\end{abstract}

\tableofcontents

%%%%%%%%%%%%%%%%%%%%%%%%%%%%%%%%%%%%%%%%%%%%%%%%%%%%%%%%%
\section{Introduction}
%%%%%%%%%%%%%%%%%%%%%%%%%%%%%%%%%%%%%%%%%%%%%%%%%%%%%%%%%

A time-dependent family of embeddings $F : M^n \times [0, T) \to \R^N$ is said to evolve by mean curvature flow if
	$$\partial_t F =  H	$$
where $H = \vec H_{M_t}$ denotes the mean curvature vector of the embedded submanifold $M_t = F( M \times \{ t \} ) \subset \R^N$.
Submanifolds evolving by mean curvature flow often develop singularities in finite time $T < \infty$.
Huisken \cite{Huisken84} showed that the second fundamental form $A$ of a compact hypersurface $M_t$ evolving by mean curvature flow always blows up at a finite-time singularity $T < \infty$, that is,
	$$\limsup_{t \nearrow T} \sup_{x \in M_t} |  A | = \infty.$$

Given Huisken's result \cite{Huisken84}, it is natural to ask if the trace of the second fundamental form, namely the mean curvature $ H = \tr  A$, must also blow up at finite-time singularities of the mean curvature flow.
\cite{Stolarski23} answered this question in the negative by showing the mean curvature flow solutions constructed by Vel{\'a}zquez \cite{V94} develop finite-time singularities even though $H$ remains uniformly bounded $\sup_{t \in [0, T)} \sup_{x \in M_t } | H| < \infty$.
In dimension $n=7$, \cite{ADS21} further showed how to extend these mean curvature flow solutions $M_t^7 \subset \R^8$ to weak mean curvature flows defined for later times $t \in [0, T + \epsilon)$ 
in such a way that $H$ remains uniformly bounded and the flow has an isolated singularity at $(\mathbf 0 , T) \in \R^{8} \times [0, T+\epsilon)$.

Given that there exist smooth mean curvature flows which develop singularities with bounded mean curvature \cite{V94, Stolarski23, ADS21},
the focus of this current article is to instead study the singularities of \emph{any} mean curvature flow $M^n_t \subset \R^N$ with uniform mean curvature bounds.
In this general setting, uniform mean curvature bounds along the flow allow us to incorporate the well-developed theory of varifolds with bounds on their first variation.
In particular, we leverage that theory to obtain the following result which holds more generally for weak mean curvature flows of arbitrary codimension with integral mean curvature bounds:

\begin{thm} \label{meta thm 1}
	Let $2 \le n < N$ and let $U \subset \R^N$ be open.
	Let $(\mu_t)_{t \in (a,b)}$ be an integral $n$-dimensional Brakke flow in $U \subset \R^N$ with locally uniformly bounded areas and generalized mean curvature $H \in L^\infty L^p_{loc}(U \times (a,b))$ for some $p \in (n, \infty]$.
	
	Except for a countable set of times $t \in (a,b)$, $(\mu_t)_{t \in (a,b)}$ equals the Brakke flow of a family of integer rectifiable $n$-varifolds $(V_t)_{t \in (a,b]}$ that extends to the final time-slice $t = b$.
	
	For any $(x_0, t_0) \in U \times (a, b]$, any tangent flow of $(\mu_t)_{t \in (a,b)}$ at $(x_0, t_0)$ is given by the static flow of a stationary cone $\C$.
	The stationary cones $\C$ that arise as tangent flows to $(\mu_t)_{t \in (a,b)}$ at $(x_0, t_0)$ are exactly the tangent cones to $V_{t_0}$ at $x_0$.
\end{thm}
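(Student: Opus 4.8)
The plan is to combine Huisken's monotonicity formula for Brakke flows with the elementary observation that parabolic rescaling renders the $L^p$ mean curvature bound subcritical when $p > n$, and then to transfer the resulting tangent-flow structure to the spatial structure of the extended family $(V_t)$. For the first two assertions: for a.e.\ $t \in (a,b)$ the slice $\mu_t$ has generalized mean curvature in $L^p_{loc}$ with $p > n$, so by Allard's theory --- first-variation bounds, the monotonicity formula, and $C^{1,\alpha}$ regularity where the Gaussian density is close to $1$ --- it is (represented by) a good integer rectifiable $n$-varifold with controlled density ratios. The Brakke inequality shows that $t \mapsto \mu_t(\phi)$ has locally bounded variation for each $\phi \in C^1_c(U;[0,\infty))$, hence is continuous off a countable set $E \subset (a,b)$; combined with Allard's compactness theorem for integral varifolds with first-variation bounds and the displacement estimate below, $t \mapsto \mu_t$ extends continuously in the varifold topology across every $t_0 \in (a,b]\setminus E$, and in particular to $t_0 = b$, defining $V_{t_0}$; at the remaining countably many times one sets $V_{t_0} := \lim_{s\nearrow t_0} V_s$, which exists by the bounded-variation structure.

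For the tangent-flow statement, fix $(x_0,t_0) \in U\times(a,b]$, let $D_\lambda$ be the dilation $x\mapsto\lambda(x-x_0)$, and let $\mu^\lambda$ be the parabolic rescalings of $(\mu_t)$ about $(x_0,t_0)$. Huisken's monotonicity formula (valid for integral Brakke flows with the stated mass bounds) makes the Gaussian density ratios $\Theta(x_0,t_0,r)$ monotone in $r$ and bounded, hence convergent as $r\to0$ to $\Theta(x_0,t_0)\ge1$, and it yields a local density-ratio bound; so the $\mu^\lambda$ have locally uniformly bounded mass, and by Brakke's compactness theorem a subsequence $\mu^{\lambda_i}$ with $\lambda_i\to\infty$ converges to a nontrivial integral Brakke flow $\mu^\infty$. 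Two facts pin it down. First, $\Theta(\mu^{\lambda_i},0,0,r)=\Theta(x_0,t_0,r/\lambda_i)\to\Theta(x_0,t_0)$ for every $r$, so the monotonicity quantity of $\mu^\infty$ is constant in $r$, and the rigidity case of Huisken's monotonicity (Ilmanen) forces $\mu^\infty$ to be backwardly self-similar: each $\mu^\infty_s$ with $s<0$ is $\sqrt{-s}$ times a fixed integral self-shrinker $V^\infty$, for which $\vec H$ is a multiple of $x^\perp$. Second, under rescaling the norm of $\vec H^\lambda$ in $L^\infty L^p$ over a fixed spacetime region scales by $\lambda^{-1+n/p}$, which tends to $0$ since $p>n$; by lower semicontinuity of the $L^p$ norm of generalized mean curvature under varifold convergence, $\vec H_{\mu^\infty_s}=0$ for a.e.\ $s$. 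Hence $x^\perp=0$ on $V^\infty$, so $V^\infty$ is a cone, and $\vec H=0$ makes it stationary; being a stationary cone it is dilation invariant, so $\mu^\infty_s = \C := V^\infty$ for all $s$. Thus every tangent flow at $(x_0,t_0)$ is the static flow of a stationary cone.

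It remains to match these cones with the tangent cones of $V_{t_0}$ at $x_0$. The displacement estimate asserts that over a time interval of length $\tau$ the flow moves by at most $C\tau$ in $L^p$ --- rigorously via the Allard regularity above, away from a bad set of controlled measure --- and a scaling computation using $p>n$ shows that after magnifying by $\lambda\sim\tau^{-1/2}$ this displacement is $\lesssim\tau^{(1-n/p)/2}\to0$. Taking $\tau=\lambda_i^{-2}$, it follows that the weighted push-forwards $(D_{\lambda_i})_\#\mu_{t_0-\lambda_i^{-2}}$ and $(D_{\lambda_i})_\#V_{t_0}$ have the same varifold limit; the former is the slice $\mu^{\lambda_i}_{-1}\to\C$, so $\C$ is a tangent cone to $V_{t_0}$ at $x_0$. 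Conversely, a tangent cone $\C'$ of $V_{t_0}$ at $x_0$ arises along some $\rho_j\to\infty$; passing to a subsequence along which $\mu^{\rho_j}$ converges and applying the previous paragraph produces a tangent flow whose slices equal the corresponding spatial limit, namely $\C'$. This gives the asserted bijection.

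The technical heart, and the step I expect to be the main obstacle, is the time-continuity and displacement estimate: controlling the distance between nearby time-slices by the $L^p$ mean curvature bound in precisely the scale-invariant form needed to identify the parabolic blow-up with the spatial blow-up of $V_{t_0}$, together with a clean treatment of the countably many jump times and of $t_0=b$ so that $V_{t_0}$ is unambiguous. Everything else --- Huisken monotonicity and its rigidity, Brakke's compactness theorem, Allard's regularity and compactness for varifolds with first-variation bounds, and the trivial scaling $\|\vec H^\lambda\|_{L^p}\sim\lambda^{-1+n/p}$ --- is a fairly standard package.
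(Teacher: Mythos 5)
Your first two steps track the paper closely: the construction of $(V_t)_{t\in(a,b]}$ by taking left limits and using compactness of integral varifolds with $L^p$ mean curvature bounds is essentially Lemma \ref{lem nice representative}, and the argument that every tangent flow is the static flow of a stationary cone (Huisken monotonicity plus self-shrinker rigidity, combined with the scaling $\|H\|_{L^p}\sim\lambda^{n/p-1}\to 0$) is essentially Lemma \ref{lem tangent flow}. One small repair there: "lower semicontinuity of the $L^p$ norm under varifold convergence" needs the rescaled \emph{slices} to converge as varifolds, which Brakke-flow convergence does not give at every time; the paper fixes a single time $t<0$ at which all rescaled slices are genuine varifolds with small $L^p$ curvature, identifies the limit slice as a stationary varifold via Lemmas \ref{Lem Compactness} and \ref{Lem Checking Convergence}, and then propagates to all $t<0$ by the shrinker scaling. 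That is routine.

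The genuine gap is exactly the step you flag as the technical heart: identifying the parabolic blow-up with the spatial blow-up of $V_{t_0}$ via a "displacement estimate." No such estimate is available under the stated hypotheses. An $L^\infty L^p$ bound on $H$ gives no pointwise control on how far slices move; for a non-smooth integral Brakke flow there is no notion of displacement of points at all; Brakke's inequality is one-sided, so mass can be shed instantaneously (possibly at countably many times accumulating at $t_0$); and Allard regularity cannot be invoked without density hypotheses (slices may have higher multiplicity everywhere), so "moves by at most $C\tau$ in $L^p$, away from a bad set" is neither defined nor provable here. The paper's Theorem \ref{thm blow-ups} replaces this with a two-sided comparison along a \emph{common} subsequence. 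One direction, $\mu_{\C}\le\mu_{\tilde\C}$, comes from Brakke's inequality applied to the rescaled flows between times $-1$ and $0^-$ (using $\mu_{V_0}=\lim_{t'\nearrow 0}\mu_{t'}$ from Lemma \ref{lem nice representative}), with the error term $\int_{-1}^{0}\int|H_i||\nabla f|\,d\mu^i_t\,dt$ killed by the scaling $\lambda_i^{n/p-1}$ \emph{together with} the area-ratio bound of Lemma \ref{lem area ratio bound}. The reverse direction is proved at the level of density ratios: apply the spatial monotonicity formula \eqref{simple mono forla eqn} to the slices at times $\tau\lambda_i^{-2}$, compare the ratio at scale $r\lambda_i^{-1}$ with a fixed scale $\rho$, let $i\to\infty$ using the left-continuity $V_{t'}\rightharpoonup V_{t_0}$, and then let $\rho\searrow 0$ to recover $\theta_{V_{t_0}}(x_0)$. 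Then $\mu_{\C}\le\mu_{\tilde\C}$ together with equality of the ball masses forces $\C=\tilde\C$. Without an argument of this type in place of your displacement estimate, the asserted bijection between tangent flows and tangent cones --- the main content of the final assertion --- remains unproved.
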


\noindent Note that the stationary cones $\C$ in Theorem \ref{meta thm 1} are generally integer rectifiable $n$-varifolds that are dilation invariant and have zero first variation (see Lemma \ref{lem nice representative} and Theorem \ref{thm blow-ups} for more precise statements).

Given a Brakke flow $(\mu_t)$, tangent flows at $(x_0,t_0)$ are, simply-speaking, subsequential limits of parabolic rescalings of $(\mu_t)$ based at $(x_0, t_0)$.
Analogously, tangent cones of a varifold $V_{t_0}$ at $x_0$ are subsequential limits of spatial rescalings of $V_{t_0}$ based at $x_0$.
Thanks to suitable monotonicity formulas and compactness theorems, tangent flows and tangent cones always exist.
However, the uniqueness of tangent cones and tangent flows, that is independence of subsequence, is generally an open problem.
This uniqueness question is fundamental for singularity analysis and regularity.

Theorem \ref{meta thm 1} in particular shows that, for flows with mean curvature $H \in L^\infty L^p_{loc}( U \times (a,b))$, 
uniqueness of the tangent \emph{flow} of $(\mu_t)$ at $(x_0, t_0)$ is equivalent to uniqueness of the tangent \emph{cone} of $V_{t_0}$ at $x_0$.
Because of this correspondence, we are able to prove the following uniqueness result:
\begin{thm} \label{meta thm 2}
	Let $2 \le n < N$ and let $U \subset \R^N$ be open.
	Let $(\mu_t)_{t \in (a,b)}$ be an integral $n$-dimensional Brakke flow in $U \subset \R^N$ with locally uniformly bounded areas and generalized mean curvature $H \in L^\infty L^\infty_{loc}(U \times (a,b))$.
	
	If a tangent flow to $(\mu_t)_{t \in (a,b)}$ at $(x_0, t_0) \in U \times (a,b]$ is given by the static flow of a regular cone $\C$ (with multiplicity one),
	then this is the unique tangent flow to $(\mu_t)$ at $(x_0, t_0)$.
\end{thm}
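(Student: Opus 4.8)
The strategy is to reduce uniqueness of the tangent \emph{flow} to uniqueness of a tangent \emph{cone} of the single varifold $V_{t_0}$ — which is exactly what Theorem~\ref{meta thm 1} makes possible — and then to invoke the elliptic uniqueness-of-tangent-cones theorem of Simon (in the integrable case, Allard--Almgren) at the regular cone point $\C$, keeping track of the extra term the bounded generalized mean curvature contributes. By Theorem~\ref{meta thm 1} and Lemma~\ref{lem nice representative}, $V_{t_0}$ is an integer rectifiable $n$-varifold whose generalized mean curvature is bounded by some $\Lambda < \infty$ in $L^\infty$ on a neighbourhood of $x_0$, and the tangent flows of $(\mu_t)$ at $(x_0, t_0)$ are precisely the static flows of the tangent cones of $V_{t_0}$ at $x_0$. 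In particular $\C$ is one such tangent cone, with multiplicity one, so it suffices to show $\C$ is the \emph{unique} tangent cone of $V_{t_0}$ at $x_0$; the theorem then follows from Theorem~\ref{meta thm 1}. Assume $x_0 = \mathbf 0$ and put $\Sigma := \C \cap \Sph^{N-1}$; since $\C$ is a regular cone with vanishing first variation and multiplicity one, $\Sigma$ is a smooth, closed, embedded minimal submanifold of $\Sph^{N-1}$.

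For the elliptic statement, first note that $|H| \le \Lambda$ near $\mathbf 0$ gives the monotonicity formula for varifolds with bounded first variation, up to an $O(r)$ correction, so the density ratios $\Theta(V_{t_0}, \mathbf 0, r)$ converge as $r \to 0^+$ to the vertex density of $\C$. Along a sequence of scales $r_i \to 0$ realising $\C$ as a tangent cone, the rescalings of $V_{t_0}$ about $\mathbf 0$ at scale $1/r_i$ converge as varifolds to $\C$ on a fixed annulus; near each point of $\C$ there — a smooth multiplicity-one surface — Allard's regularity theorem applies, and patching shows these rescalings are $C^{1,\alpha}$ graphs of small normal sections over $\C$ on that annulus, with norms $\to 0$. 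One then runs Simon's argument: on the scales $r$ where $V_{t_0}$ is $\varepsilon$-close to $\C$, write the rescaled cross sections $r^{-1}\big(V_{t_0}\cap\partial B_r(\mathbf 0)\big)$ as normal graphs of sections $u(\cdot, r)$ over $\Sigma$, $\|u(\cdot, r)\|_{C^{1,\alpha}(\Sigma)} < \varepsilon$; the first-variation identity for $V_{t_0}$, transcribed into the logarithmic variable $\tau = -\log r$, becomes Simon's Łojasiewicz system for the area functional $\mathcal A$ on submanifolds of $\Sph^{N-1}$ near $\Sigma$ (equivalently, the functional whose critical points are the links of stationary cones near $\C$) at its critical point $u \equiv 0$, perturbed by a forcing term of size $O(r) = O(e^{-\tau})$ coming from the mean curvature — here the uniform ($L^\infty$, not merely $L^p$) bound on $H$ is what controls the mean curvature of each rescaled varifold, by $r\Lambda$.

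Because $\mathcal A$ is real-analytic in normal-graph coordinates, the Łojasiewicz--Simon inequality $|\mathcal A(u) - \mathcal A(0)|^{1-\theta} \le C\,\|\operatorname{grad}\mathcal A(u)\|_{L^2(\Sigma)}$ holds near $u \equiv 0$ for some $\theta \in (0, \tfrac12]$. Feeding it into the differential inequalities as in Simon's work — the extra $O(e^{-\tau})$ term is harmless since it is integrable in $\tau$ — shows both that $V_{t_0}$ is $\varepsilon$-close to $\C$ on \emph{all} small enough scales and that $\int^{\infty}\|\partial_\tau u(\cdot, \tau)\|_{L^2(\Sigma)}\,d\tau < \infty$. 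Hence $u(\cdot, r)$ converges as $r \to 0^+$, in $L^2(\Sigma)$ and (interpolating with the uniform $C^{1,\alpha}$ bound) in $C^1(\Sigma)$, to some $u_\infty$, and the cone over $\operatorname{graph}_\Sigma u_\infty$ is the unique tangent cone of $V_{t_0}$ at $\mathbf 0$. Since it also equals the varifold limit along the $r_i$, where the sections tend to $0$, we get $u_\infty \equiv 0$. Thus $\C$ is the unique tangent cone of $V_{t_0}$ at $x_0$, and by Theorem~\ref{meta thm 1} the static flow of $\C$ is the unique tangent flow of $(\mu_t)$ at $(x_0, t_0)$.

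The main obstacle is making the Allard--Almgren/Simon uniqueness machinery work with the generalized mean curvature present: one must verify (i) that Allard's theorem produces $C^{1,\alpha}$ graphical representations of $V_{t_0}$ over the regular cone $\C$ — not merely over planes — on dyadic annuli, using multiplicity one and the smoothness of $\C$ off its vertex; (ii) that the first-variation identity in logarithmic scale is the minimal-cone equation plus a forcing term of order $r$, nonzero (unlike the pure minimal-surface case) but integrable in $\tau$, so that the summability of $\|\partial_\tau u\|$ is preserved; and (iii) that the Łojasiewicz--Simon inequality holds with constants uniform over the relevant scales. A secondary, bookkeeping point is to confirm — from the construction of $(V_t)$ in Lemma~\ref{lem nice representative} — that $V_{t_0}$ carries locally bounded generalized mean curvature near $x_0$, which is what licenses the monotonicity formula and Allard's theorem here.
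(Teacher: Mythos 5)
Your overall reduction is the same as the paper's: by Theorem \ref{meta thm 1} (via Corollary \ref{cor uniqueness equivalence}) it suffices to show that $\C$ is the unique tangent cone of the time-slice $V_{t_0}$ at $x_0$, and this is attacked with Simon's uniqueness-of-tangent-cones machinery, with Allard's theorem providing the $C^{1,\alpha}$ graphical representation over $\C$ on annuli. The gap is in how you treat the mean curvature term. You feed it into the {\L}ojasiewicz--Simon scheme purely as a forcing of size $O(r\Lambda)=O(e^{-\tau})$ in $L^\infty$ and declare it ``harmless since it is integrable in $\tau$.'' But the obstruction is not the size or decay of the forcing, it is its \emph{regularity}: Simon's theorem \cite[\S 7, Theorem 5]{Simon83} carries the structural hypothesis (7.23), which (as the paper spells out) demands that whenever the varifold is a small $C^{1,\alpha}$-graph over $\C$ on an annulus, its generalized mean curvature satisfies scaled interior $C^2$ bounds. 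An $L^\infty$ bound on $H_{V_{t_0}}$ alone gives, via elliptic theory, only $W^{2,p}$/$C^{1,\alpha}$ control of the graphing function and no derivative control of the forcing, which is not enough to run the interior-estimate and differentiated-equation steps of Simon's argument. Your sketch, as written, would prove the much stronger claim that \emph{any} integral varifold with bounded generalized mean curvature has a unique tangent cone at a point where one tangent cone is regular with multiplicity one -- a statement that does not follow from the cited machinery and is not established here.

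What closes this gap in the paper is precisely the parabolic structure that your argument never uses: the flow at times $t<t_0$. Lemma \ref{lem graphicality propogates} shows (by a blow-up/contradiction argument using Theorem \ref{thm blow-ups} and White's local regularity theorem) that $C^{1,\alpha}$-graphicality over $\C$ on an annulus at time $t_0$ propagates outward in space and backward in time; then Lemma \ref{lem interior H ests} applies interior estimates for smooth mean curvature flow together with the parabolic evolution equation $\partial_t H = \Delta H + \nabla A * H + A*\nabla H + A*A*H$ to obtain $|\nabla_{V_{t_0}} H| \le C\|H\|_{L^\infty L^\infty}/\sigma$ and $|\nabla^2_{V_{t_0}} H| \le C\|H\|_{L^\infty L^\infty}/\sigma^2$ on interior annuli. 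This is where the hypothesis $H\in L^\infty L^\infty_{loc}$ (a space-time bound, not merely a bound on the single slice $V_{t_0}$) is genuinely used, and it is exactly what verifies \cite[(7.23)]{Simon83} so that Simon's theorem applies. To repair your proposal you would need to add this step (or an equivalent substitute showing the {\L}ojasiewicz--Simon iteration closes with non-differentiable forcing, which is not something you can simply cite); the remaining parts -- the reduction via Theorem \ref{meta thm 1}, Allard graphicality, and the observation that the asymptotic section vanishes because the rescalings along the original sequence converge to $\C$ -- match the paper and are fine.
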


It is worth mentioning some related uniqueness results.
Huisken's monotonicity formula ensures tangent flows are always self-similarly shrinking $\Sigma_{t} = \sqrt{-t} \Sigma_{-1}$ for $t < 0$.
There are various uniqueness results depending on $\Sigma = \Sigma_{-1}$.
When $\Sigma$ is compact, \cite{Schulze14} proved that if $\sqrt{-t} \Sigma$ is a tangent flow of a mean curvature flow $(M_t)$ at $(x_0,t_0)$, then it's the unique tangent flow at $(x_0, t_0)$.
\cite{CM15} proved uniqueness of the tangent flow $\sqrt{-t}\Sigma$ when $\Sigma = \R^{n-k} \times \Sph^k$ is a generalized cylinder.
\cite{CS21} proved uniqueness of the tangent flow when $\Sigma$ is smooth and asymptotically conical.
\cite{Zhu20,LZ24} obtained additional generalizations of the uniqueness results in \cite{CM15, CS21}, respectively.

Importantly, the uniqueness results in \cite{CS21, LZ24} require $\Sigma$ to be smooth and do not apply when $\Sigma = \C$ is a minimal cone for example.
The first uniqueness result for tangent flows given by non-smooth $\Sigma$ came in \cite{LSS22} which showed that,
for 2-dimensional Lagrangian mean curvature flows $L^2_t \subset \mathbb{C}^2$, tangent flows given by a transverse pair of planes $\Sigma = P_1 \cup P_2 \subset \mathbb{C}^2$ are unique.
The uniqueness result Theorem \ref{meta thm 2} here applies to non-smooth minimal cones $\Sigma = \C$ which arise as tangent flows to general integral Brakke flows of any codimension in an open subset $U \subset \R^N$, albeit under the assumption of a uniform mean curvature bound $H \in L^\infty L^\infty_{loc}$.

Under additional hypotheses, we can also describe an accompanying limit flow that arises as a more general blow-up limit around a singularity.
\begin{thm}\label{meta thm 3}
	Let $\cl{M} = (M_t^n)_{t \in (a,b)}$ be a smooth, properly embedded mean curvature flow in an open subset $U \subset \R^{n+1}$ with mean curvature $H \in L^\infty L^\infty_{loc}( U \times (a,b) )$.
	
	If the tangent flow of $\cl{M}$ at $(x, b) \in U \times \{ b \}$ is given by the static flow of a generalized Simons cone $\C^n \subset \R^{n+1}$ (with multiplicity one) and $\C$ is area minimizing,
	then there exists a sequence $(x_i, t_i ) \in U \times (a,b)$ with $\lim_{i \to \infty} (x_i , t_i) = (x, b)$ and 
	a sequence $\lambda_i \searrow 0$ such that the sequence of rescaled mean curvature flows
	$\cl{M}_i = \cl{D}_{\lambda_i^{-1}} ( \cl{M} - (x_i , t_i) )$
	converges to the static flow of a smooth Hardt-Simon minimal surface.
\end{thm}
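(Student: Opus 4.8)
The plan is to combine the tangent-flow uniqueness already established with a blow-up of $\cl{M}$ at a sequence of spacetime points lying \emph{off} the central singular point, using the mean curvature bound to annihilate the mean curvature in the limit and the area-minimality of $\C$ to pin down the limit.

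First I would record that the smooth flow $\cl{M}$, regarded as an integral Brakke flow $(\mu_t)$, satisfies the hypotheses of Theorems \ref{meta thm 1} and \ref{meta thm 2}: a generalized Simons cone $\C^n \subset \R^{n+1}$ has smooth link, hence is a regular cone in the sense of Theorem \ref{meta thm 2}. Therefore the static flow of $\C$ is the \emph{unique} tangent flow of $\cl{M}$ at $(x,b)$, and by Theorem \ref{meta thm 1} $\C$ is the unique tangent cone of $V_b$ at $x$. Consequently, for every $\epsilon>0$ there are $r_\epsilon,\tau_\epsilon>0$ such that on the parabolic neighborhood $B_{r_\epsilon}(x)\times(b-\tau_\epsilon,b)$ the rescalings $\cl{D}_{r^{-1}}(\cl{M}-(x,b))$ are $\epsilon$-close to the static flow of $\C$ for every $0<r\le r_\epsilon$ (smoothly away from the spine, by Brakke and Allard regularity, since $\C$ occurs with multiplicity one).

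The second and crucial step is the choice of base points and scales. Fix a small $\epsilon_0>0$ depending only on $\C$. Since each $M_t$ with $t<b$ is a smooth properly embedded hypersurface, it cannot coincide near $x$ with the singular cone $\C$, so $M_t$ must resolve the cone point at some positive scale; let $\lambda(t)>0$ be (comparable to) the largest scale at which, after a suitable translation, $M_t$ fails to be $\epsilon_0$-close to $\C$. The closeness statement above forces $\lambda(t)\searrow 0$ as $t\nearrow b$. Choose $t_i\nearrow b$, set $\lambda_i:=\lambda(t_i)$, and choose $x_i\to x$ near $M_{t_i}$ realizing the deviation at scale $\lambda_i$, so that the parabolically rescaled flows $\cl{M}_i:=\cl{D}_{\lambda_i^{-1}}(\cl{M}-(x_i,t_i))$ are, at unit scale near the spacetime origin, bounded away from the static flow of $\C$ by a definite amount, while remaining $o(1)$-close to it on every fixed large ball in spacetime (because $\lambda_i\ll r_\epsilon$ for each fixed $\epsilon$). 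Under parabolic dilation by $\lambda_i^{-1}$ the generalized mean curvature scales by $\lambda_i$, so $\|H_{\cl{M}_i}\|_{L^\infty L^\infty(K)}\le \lambda_i\,\|H\|_{L^\infty L^\infty_{loc}}\to 0$ for each compact $K$. By Brakke compactness a subsequence of $\cl{M}_i$ converges to an integral Brakke flow $\cl{M}_\infty$ with vanishing generalized mean curvature; arguing as in Theorem \ref{meta thm 1} and Lemma \ref{lem nice representative} in the limit shows $\cl{M}_\infty$ is the static flow of a stationary integral $n$-varifold $V_\infty$ in $\R^{n+1}$. Huisken monotonicity, unit density, and the area-minimality of $\C$ yield $\Theta(V_\infty,\cdot)\le\Theta(\C,0)$ and density ratios of $V_\infty$ bounded above by $\Theta(\C,0)$; hence $V_\infty$ is a nonzero, multiplicity-one stationary varifold every tangent cone of which at infinity equals $\C$ (upper semicontinuity of the density together with the first step), and whose singular set is empty by the Hardt–Simon smooth-side foliation and area-minimizing regularity. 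By the Hardt–Simon classification of minimal hypersurfaces asymptotic to an area-minimizing cone with an isolated singularity, the only such smooth complete embedded hypersurfaces are $\C$ itself and the members of the two one-parameter dilation families of Hardt–Simon surfaces; since $V_\infty\ne\C$ by construction, $V_\infty$ is (a dilate of) a Hardt–Simon minimal surface and $\cl{M}_i$ converges to its static flow, as claimed.

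I expect the main obstacle to be the second step: making the resolution scale $\lambda(t)$ and the base point $x_i$ precise, and simultaneously establishing two competing facts — that at scale $\lambda_i$ the rescaled flow is bounded \emph{away} from $\C$ (so the limit is not the cone), yet has density ratios so close to $\Theta(\C,0)$ that the limit can neither degenerate to a hyperplane nor to a lower-density minimal surface and must instead be asymptotic to $\C$ at infinity. This is a quantitative neck-detection (``no-hole'') statement, morally a three-annulus lemma for the deviation of the flow from the static cone, and its proof leans essentially on the bound $H\in L^\infty L^\infty_{loc}$ — which keeps the flow parabolically close to the static cone on long scales — together with the rigidity supplied by one-sided area-minimization of $\C$.
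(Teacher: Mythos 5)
Your overall strategy is the same as the paper's (blow up at a carefully chosen space--time point and scale where the flow first deviates from the cone, use the $H$-bound to make the limit a stationary/static object, and use area-minimality plus Hardt--Simon theory to identify it), but the step you yourself flag as the main obstacle is a genuine gap, and it is exactly where the paper's work lies. Your selection ``$\lambda(t)=$ largest scale at which, after a suitable translation, $M_t$ fails to be $\epsilon_0$-close to $\C$'' is per time-slice and does not control rotations or nearby centers, so the assertion ``$V_\infty\ne\C$ by construction'' does not follow: the varifold limit of your rescalings could perfectly well be a rotated or translated copy of $\C$ (deviation from the cone centered at $x_i$ in a $C^{1,\alpha}$ sense does not survive weak convergence unless you already know the limit is smooth on the relevant annulus, and even then it only excludes that one cone). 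The paper handles this by defining $\epsilon$-closeness to the whole rotation family $\cl{C}$ on space--time annuli, introducing the maximal interval $[\lambda_*(X),\lambda^*(X)]$, proving via pseudolocality-based curvature bounds (Lemmas \ref{lem curv bounds}--\ref{lem almost ! minimizer}) that $\lambda_*(\cdot,-1)$ attains an interior minimum, and rescaling at the minimizing point $X_i$ by the minimal value $\lambda_*(X_i)$. That minimization over base points is what later kills the ``translated cone'' scenario: in Claim \ref{claim lambda_i' not 0} and the last paragraph of Theorem \ref{thm Hardt-Simon limit}, if the limit were ($\epsilon'$-close to) a cone centered at some $a$, White's regularity would upgrade the convergence to smooth convergence and propagate $\epsilon$-closeness to a nearby center and to scales just below $\lambda_*(X_i)$, contradicting minimality. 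Without this (or an equivalent quantitative mechanism), your limit cannot be shown to differ from a cone in $\cl{C}$.

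There is a second gap in your classification step. Your limit $V_\infty$ is only a stationary integral varifold (a limit of flows whose mean curvature tends to zero); it is not area-minimizing, so ``singular set empty by \dots area-minimizing regularity'' and the appeal to a Hardt--Simon classification of smooth hypersurfaces are not justified as stated. The paper instead uses the closeness of $\cl{D}_{\lambda^{-1}}\hat{\cl{M}}$ to $\cl{C}$ at \emph{all} scales $\lambda\ge 1$ (Lemma \ref{lem ancient limit}), the decay/rigidity theorem \cite[Theorem 3.1]{ES19} over the Hardt--Simon foliation (this is where one-sided minimality of $\C$ enters), the Solomon--White strong maximum principle, the constancy theorem, and unit-regularity \cite[Theorem 4.2]{SW20} to show the eternal limit is the \emph{static} flow of a smooth Hardt--Simon leaf; note that ``static'' is itself not automatic, since a Brakke flow with $H\equiv 0$ and stationary time-slices may still vanish suddenly. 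A smaller remark: you do not actually need the uniqueness Theorem \ref{meta thm 2} here --- the hypothesis already provides one sequence of rescalings converging to $\cl{M}_{\C}$, which is all the paper's argument uses.
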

We refer the reader to Section \ref{sect Pinching Hardt-Simon Minimal Surfs} for the definitions relevant to Theorem \ref{meta thm 3}.
For now, we simply note that the mean curvature flow solutions constructed in \cite{V94} provide examples of mean curvature flows satisfying Theorem \ref{meta thm 3}.
Theorem \ref{meta thm 3} states that general mean curvature flows with $H \in L^\infty L^\infty_{loc}$ in some sense mimic the dynamics of Vel{\'a}zquez's mean curvature flow solutions \cite{V94} near Simons cone singularities.

The paper is organized as follows: 
In Section \ref{sect Preliminaries}, we establish notation, specify definitions, and obtain some general results used throughout the paper. 
In particular, Theorem \ref{meta thm 1} is proven here.
Section \ref{sect Uniqueness of Tangent Flows} proves the uniqueness of tangent flows given by regular stationary cones, Theorem \ref{meta thm 2}.
In Section \ref{sect Pinching Hardt-Simon Minimal Surfs}, we obtain refined dynamics of mean curvature flows near regular stationary cones and prove Theorem \ref{meta thm 3}.
Finally, Appendix \ref{appendix Varifolds with H in L^p} reviews some well-known results about integral varifolds with generalized mean curvature $H \in L^p_{loc}$ that are cited throughout the paper.

\bigskip \noindent \textbf{Acknowledgements.} I would like to thank Professor Felix Schulze for many helpful conversations, particularly regarding the results in \cite{LSS22}.

The author is supported by a Leverhulme Trust Early Career Fellowship (ECF-2023-182).
%todo insert the following open access statement for any submissions
%For the purpose of open access, the author has applied a CC BY public copyright licence to any Author Accepted Manuscript version arising from this submission.

%%%%%%%%%%%%%%%%%%%%%%%%%%%%%%%%%%%%%%%%%%%%%%%%%%%%%%%%%
\section{Preliminaries}		\label{sect Preliminaries}
%%%%%%%%%%%%%%%%%%%%%%%%%%%%%%%%%%%%%%%%%%%%%%%%%%%%%%%%%

\subsection{Brakke Flows with $H$ Bounds}

\begin{definition} \label{defn varifold with H in L^p}
	Let $2 \le n < N$, $U \subset \R^N$ be open, and $p \in (1, \infty]$.
	Let $V$ be an integer rectifiable $n$-varifold in $U$ and $\mu_V$ be the associated measure on $U$.
	We say that $V$ has \emph{generalized mean curvature $H \in L^p_{loc}(U)$} if
	there exists a Borel function $H : U \to \R^N$ with $H \in L^p_{loc}( U, d\mu_V)$ such that the first variation $\delta V$ satisfies
	\begin{gather*}
		\delta V ( X )= - \int H \cdot X d \mu_{V} \qquad \forall X \in C^1_c ( U, \R^N) .
	\end{gather*}
\end{definition}

\begin{remark}
	Observe that, by the definition given in Definition \ref{defn varifold with H in L^p}, if $V$ has generalized mean curvature $H \in L^p_{loc}(U)$ then $V$ has no generalized boundary in $U$.
	This convention differs somewhat from the existing literature but we adopt it nonetheless to simplify the statements in the remainder of the paper.
\end{remark}

\begin{definition}
	Let $2 \le n < N$, $U \subset \R^N$ be open, and $p \in (1, \infty]$.
	Let $(\mu_t)_{t \in (a,b)}$ be an $n$-dimensional integral Brakke flow in $U$.
	Recall that, for a.e. $t \in (a,b)$, there exists an integer rectifiable $n$-varifold $V_t$ such that $\mu_{V_t} = \mu_t$.
	
	We say the Brakke flow $(\mu_t)_{t \in (a,b)}$ has \emph{generalized mean curvature $H \in L^\infty L^p_{loc}(U \times (a,b) )$}
	if 
	for a.e. $t \in (a,b)$ there exists an integer rectifiable $n$-varifold $V_t$ with generalized mean curvature $H_t \in L^p_{loc}(U)$
	such that $\mu_{V_t} = \mu_t$ and 
 	$$\| H \|_{L^\infty L^p ( K \times (a,b) ) } \doteqdot \esssup_{t \in (a, b)} \| H_t \|_{L^p( K, d\mu_t)} < \infty
	\qquad ( \forall K \Subset U)\footnotemark.$$
\end{definition}

\footnotetext{Throughout, ``$K \Subset U$" means $K \subseteq U$ and $K$ is compact.}

\begin{assumption} \label{Main Assumption-}
Let $2 \le n < N$, $U \subset \R^N$ be an open subset, $-\infty < a < b < \infty$.
Throughout, we assume $(\mu_t)_{t \in (a, b) }$ is an $n$-dimensional integral Brakke flow in $U$
such that $(\mu_t)$ has locally uniformly bounded areas, that is, 
		$$\sup_{t \in (a, b)} \mu_t( K) <\infty		\qquad \forall K \Subset U.$$
For simplicity, we will abbreviate this assumption as ``$(\mu_t)_{t \in (a, b)}$ is an integral $n$-Brakke flow in $U \subset \R^N$ with locally uniformly bounded areas" in the remainder of the paper. 		
		
\end{assumption}

We will also often assume that, for some $p \in (1, \infty]$,
\begin{equation*}
	(\mu_t)_{t \in (a,b)} \text{ has generalized mean curvature } H \in L^\infty L^p_{loc}( U \times ( a, b) ) ,
\end{equation*}
but this assumption will be indicated in each statement.

\begin{remark}
	The assumption that the flow has locally uniformly bounded areas is quite mild.
	For example, it holds for Brakke flows $(\mu_t)_{t \in [a,b)}$ starting from initial data $\mu_a$ with locally bounded areas, i.e. $\mu_a(K) < \infty$ for all $K \Subset U$.
	Indeed, this can be seen by using Brakke's inequality with suitably defined spherically shrinking test functions.

	On the other hand, the assumption that the flow has generalized mean curvature $H \in L^\infty L^p_{loc}( U \times (a,b))$ is much more restrictive.
	Nonetheless, \cite{Stolarski23} shows that even smooth mean curvature flows $(M_t^n \subset \R^{n+1})_{t \in (a,b)}$ with $H \in L^\infty L^\infty ( \R^{n+1} \times (a,b))$ can develop singularities at the final time $t = b$.
	Combined with the work of \cite{ADS21}, there are non-smooth Brakke flows with $H \in L^\infty L^\infty (\R^N \times (a,b))$ with mild singularities and small singular sets, informally speaking.
\end{remark}

The next lemma shows that Brakke flows with $H \in L^\infty L^p_{loc}$ can be changed at countably many times to get a Brakke flow which is a varifold with $H \in L^p_{loc}$ at \emph{every} time. 
Moreover, the flow naturally extends to the final time-slice.

\begin{lem} \label{lem nice representative}
	Let $(\mu_t)_{t \in (a,b)}$ be an integral $n$-Brakke flow in $U \subset \R^N$ with locally uniformly bounded areas and generalized mean curvature $H \in L^\infty L^p_{loc} (U \times (a,b) )$ for some $p \in (1, \infty]$.
	
	Then for all $t \in (a, b]$, there exists a unique integer rectifiable $n$-varifold $V_t$ with generalized mean curvature in $H_{V_t} \in L^p_{loc}(U)$ such that:
	\begin{enumerate}
		\item $\mu_t \le  \lim_{t' \nearrow t} \mu_t = \mu_{V_t}$ for all $t \in (a, b)$,
		\item $\mu_t = \mu_{V_t}$ for all but countably many $t \in (a, b)$,
		\item for all $t \in (a, b]$ and all $K \Subset U$
			$$\mu_{V_t} (K) \le \sup_{\tau \in (a, b)} \mu_\tau (K)
			\qquad \text{and} \qquad 
			\ \| H_{V_t} \|_{L^p (K ) } \le \| H \|_{L^\infty L^p ( K \times (a, b) )}$$
		\item for all $t \in (a, b]$
			$$\lim_{t' \nearrow t} V_{t'}(f) = V_t(f)		\qquad \forall f \in C^0_c ( G(n, U) )$$
		and	
		\item $(\mu_{V_t} )_{t \in (a, b] }$ is an $n$-dimensional integral Brakke flow in $U$.
	\end{enumerate}	
\end{lem}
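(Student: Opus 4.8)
\emph{Plan.} The idea is to define $V_t$ for every $t\in(a,b]$ as the variety underlying the left-limit of the weight measures $\mu_{t'}$ as $t'\nearrow t$, and then to verify the five properties. Four ingredients are needed: (i) Brakke's inequality, which forces the weight measures to be monotone-up-to-a-linear-correction and hence to have left-limits; (ii) Allard's compactness theorem for integral varifolds with $L^p$ first-variation bounds, $p>1$, to see that this left-limit is again an integral varifold with generalized mean curvature in $L^p_{loc}$ obeying the same bound; (iii) the structure theorem for integer rectifiable varifolds, which says a varifold is determined by its weight measure, giving uniqueness of $V_t$ and independence of the chosen sequence $t'\nearrow t$; and (iv) a limiting argument to re-derive Brakke's inequality for the enlarged family.

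\emph{Left-limits of the weight measures.} First I would fix $\phi\in C^2_c(U)$ with $\phi\ge 0$, recall the elementary bound $|\nabla\phi|^2\le 2\,\|D^2\phi\|_{C^0}\,\phi$, and combine it with Brakke's inequality and the locally uniform area bound to get $\frac{d}{dt}\mu_t(\phi)\le \int(-|H|^2\phi+H\cdot\nabla\phi)\,d\mu_t\le \int\frac{|\nabla\phi|^2}{4\phi}\,d\mu_t\le C(\phi)$, so that $t\mapsto \mu_t(\phi)-C(\phi)\,t$ is nonincreasing on $(a,b)$ (in integrated form). Hence $\lim_{t'\nearrow t}\mu_{t'}(\phi)$ exists for every $t\in(a,b]$ and is $\ge\mu_t(\phi)$ for $t<b$; using a countable family of such $\phi$ dense in $C^0_c(U)$ and the uniform area bound to control approximations, $\lim_{t'\nearrow t}\mu_{t'}(\phi)$ exists for all nonnegative $\phi\in C^0_c(U)$ and defines a Radon measure $\nu_t$ with $\nu_t(\phi)\le\sup_\tau\mu_\tau(\phi)$. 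The same monotonicity shows $t\mapsto\nu_t$ is left-continuous and that $\nu_t=\mu_t$ for all but the countably many $t$ at which some $\mu_\cdot(\phi_k)$ jumps. This will give (1) and (2) at the level of measures.

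\emph{Upgrading to integral varifolds and properties (3), (4).} For a.e.\ $t\in(a,b)$ the given varifold with $\mu_{V_t}=\mu_t$ is integral with $\|H_t\|_{L^p(K)}\le\|H\|_{L^\infty L^p(K\times(a,b))}$; picking $t_i\nearrow t$ in this full-measure set, the $V_{t_i}$ have locally uniform mass and locally uniform $L^p$ mean curvature bounds, so by Allard's integral compactness theorem (using $p>1$) a subsequence converges as varifolds to an integer rectifiable $n$-varifold $W$ with $H_W\in L^p_{loc}(U)$, $\|H_W\|_{L^p(K)}\le\|H\|_{L^\infty L^p(K\times(a,b))}$ by lower semicontinuity of the $L^p$-norm under varifold convergence, and $\mu_W=\lim_i\mu_{t_i}=\nu_t$. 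Because an integer rectifiable varifold is determined by its weight measure (its support carries the multiplicity $\Theta^n(\mu_W,\cdot)$), $W$ is independent of the subsequence and of $t_i$, so setting $V_t:=W$ is well-defined and yields uniqueness of $V_t$ given (1), the inequality $\mu_t=\mu_{V_t}$ at continuity times (hence the full statement of (2)), and the $L^p$-part of (3); the mass-part of (3) follows from $\mu_{V_t}(\phi)=\nu_t(\phi)\le\sup_\tau\mu_\tau(\phi)$. For (4), given any $s_j\nearrow t$ the $V_{s_j}$ again have uniform bounds, so subconverge to an integral varifold whose weight is $\lim_j\mu_{V_{s_j}}=\mu_{V_t}$ by left-continuity of $t\mapsto\nu_t$; the structure theorem identifies this limit as $V_t$, giving $V_{s_j}\to V_t$ as varifolds.

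\emph{Brakke flow property and the main obstacle.} Finally, for (5) I would run the integral form of Brakke's inequality for $(\mu_t)_{t\in(a,b)}$ on $[t_1',t_2']$ with $t_1'\nearrow t_1$, $t_2'\nearrow t_2$, and pass to the limit: the right-hand side $\int_{t_1}^{t_2}\!\int(-|H|^2\phi+H\cdot\nabla\phi)\,d\mu_{V_s}\,ds$ is unaffected by the countably many modified times and is continuous in its endpoints, while on the left $\nu_{t_2}=\lim\mu_{t_2'}$ and $\nu_{t_1}\ge\mu_{t_1'}$, so Brakke's inequality survives the enlargement at the bad times and the appended slice $t=b$; combined with integrality at every time, $L^p_{loc}\subseteq L^2_{loc}$ mean curvature for a.e.\ $t$, the uniform mass bound, and left-continuity, this shows $(\mu_{V_t})_{t\in(a,b]}$ is an integral $n$-Brakke flow. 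The main obstacle is exactly this last step: one has to be careful that passing to the left-limit enlarges the flow only at a measure-zero set of times and that, because Brakke's inequality is used in integrated form, placing the new slices at the \emph{top} of each left-jump keeps the one-sided inequality intact; the structure-theorem identification of $V_t$ from its weight measure is the other point that must be invoked cleanly.
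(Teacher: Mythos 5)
Your proposal is correct and follows essentially the same route as the paper: define $V_t$ as the varifold limit of time-slices $t'\nearrow t$ via compactness for integral varifolds with local mass and $L^p$ mean-curvature bounds, use the near-monotonicity of $\mu_t(\phi)-C(\phi)t$ (which the paper cites from Ilmanen [7.2] rather than rederiving) for the left limits, countability of jump times, and left-continuity in (4), invoke that an integer rectifiable varifold is determined by its weight measure for uniqueness, and recover Brakke's inequality by a limiting argument using (1)--(2). The only deviations are cosmetic (reproving the Ilmanen facts, and approaching both endpoints from the left in (5) where the paper fixes the lower endpoint and uses $\mu_{t_0}\le\mu_{V_{t_0}}$); note also that your parenthetical ``$\nu_{t_1}\ge\mu_{t_1'}$'' should really be the limit equality $\lim_{t_1'\nearrow t_1}\mu_{t_1'}=\nu_{t_1}=\mu_{V_{t_1}}$, which is what the argument actually uses.
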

\begin{proof}
	Let $t \in (a, b]$.
	Since $(\mu_t)$ has $H \in L^\infty L^p_{loc}(U \times (a, b))$, 
	there exists a sequence of times $t_j \nearrow t$ (with $t_j < t$) and integer rectifiable $n$-varifolds $\tilde V_j$ with generalized mean curvature $H_{\tilde V_j}$ in $L^p_{loc}(U)$ such that 
	$\mu_{t_j} = \mu_{\tilde V_j}$ and 
	$$\| H_{\tilde V_j} \|_{L^p(K, d\mu_{t_j} ) } \le \| H \|_{L^\infty L^p(K \times (a, b))} \doteqdot C_K < \infty		\qquad \forall K \Subset U.$$
	By compactness Lemma \ref{Lem Compactness},
	there exists a subsequence (still denoted $\tilde V_j$) and an integer rectifiable $n$-varifold $V_t$ 
	with locally bounded areas and generalized mean curvature $H_{V_t} \in L^p_{loc}(U)$
	such that 
	$\tilde V_j \rightharpoonup V_t$ as varifolds and 
		$$\mu_{V_t}(K) \le \sup_{\tau \in (a,b)} \mu_\tau(K) \qquad \text{and} \qquad \| H_{V_t} \|_{L^\infty(K, d\mu_{V_t} )} \le C_K		\qquad \forall K \Subset U.$$
	This defines the varifold $V_t$ for any $t \in (a, b]$ and proves it satisfies (3).
	
	By \cite[7.2(ii)]{Ilmanen93}, 
	for any $f \in C^0_c ( U, \R_{\ge 0} )$ and any $t \in (a, b)$
	\begin{equation} \label{inequality (1) proof}
		\mu_t ( f ) \le \lim_{s \nearrow t} \mu_s (f) = \lim_{j \to \infty} \mu_{\tilde V_j}(f) = \mu_{V_t}(f).
	\end{equation}
	This proves (1).
	For (2), simply note that \cite[7.2(iii)]{Ilmanen93} implies the first inequality in \eqref{inequality (1) proof} is an equality for all but countably many times $t \in (a, b)$.
	Note additionally that the equality $\lim_{s \nearrow t} \mu_s = \mu_{V_t} $ in (1) implies the varifold $V_t$ is unique.
	
	To prove (4), let $t \in (a, b]$ and $f \in C^\infty_c(U, \R_{\ge 0})$.
	Take a sequence $t_j \nearrow t$.
	By \cite[7.2(i)]{Ilmanen93}, there exists $C_f$ such that $\mu_t(f) - C_f t$ is decreasing in $t$.
	It follows that, for any $j$, 
	\begin{align*}
		\mu_{V_t} (f) 
		&= \lim_{s \nearrow t}( \mu_s (f) - C_f s ) + C_f t	&& (1)\\
		&\le \mu_{t_j} (f) - C_f t_j + C_f t	\\
		&\le \mu_{V_{t_j}} (f) + C_f ( t - t_j )		&& (1).
	\end{align*}
	Taking $j \to \infty$ gives $\mu_{V_t} (f) \le \liminf_j \mu_{V_{t_j}}(f)$.
	For the reverse inequality, let $\epsilon > 0$.
	Recall $\mu_{V_t} (f) = \lim_{s \nearrow t} \mu_t(f)$ and similarly for the the $V_{t_j}$.
	Thus, there exists $s = s(\epsilon) < t$ such that $| s - t | < \epsilon $ and $| \mu_{V_t}(f) - \mu_s (f) | < \epsilon.$
	There exists $J$ such that $s < t_j < t$ for all $j > J$.
	It follows that, for $j > J$, 
	\begin{align*}
		\mu_{V_{t_j}}(f) - \mu_{V_t}(f)
		&\le \mu_{V_{t_j}}(f) - \mu_{s}(f) + \epsilon \\
		&= \lim_{\sigma \nearrow t_j} ( \mu_\sigma (f) - C_f \sigma ) + C_f t_j - \mu_{s}(f) + \epsilon	\\
		&\le \mu_s(f) - C_f s + C_f t_j - \mu_{s} (f) + \epsilon	\\
		&= C_f ( t_j - s ) + \epsilon.
	\end{align*}
	Taking $j \to \infty$ then $\epsilon \to 0$ gives $\limsup_j \mu_{V_{t_j} }(f) \le \mu_{V_t}(f)$.
	Since the sequence $t_j \nearrow t$ and $f \in C^\infty_c(U, \R_{\ge 0} )$ were arbitrary, it follows that
		$$\lim_{t' \nearrow t} \mu_{V_{t'} }(f) = \mu_{V_t} (f)	\qquad \forall f \in C^\infty_c ( U, \R_{\ge 0} ).$$
	Convergence as varifolds then follows from Lemma \ref{Lem Checking Convergence} and completes the proof of (4).
	
	To prove (5), it suffices to check Brakke's inequality.
	Let $a < t_0 < t_1 \le b$ and $f \in C^1_c ( U \times [t_0, t_1] )$ with $f \ge 0$.
	Then
	\begin{align*}
		\mu_{V_{t_1} } (f) - \mu_{V_{t_0}} (f)
		&\le \lim_{s \nearrow t_1} \mu_s (f) - \mu_{t_0} (f)		&& (1)	\\
		&\le \lim_{s \nearrow t_1} \int_{t_0}^s \int \partial_t f + \nabla f \cdot H - |H|^2 f d\mu_t dt	\\
		&= \int_{t_0}^{t_1} \int\partial_t f + \nabla f \cdot H - |H|^2 f d\mu_{V_t} dt	&& (2).
	\end{align*}
\end{proof}

\subsection{Huisken's Monotonicity Formula and Gaussian Density} \label{subsect Huisken's mono formula}

Let
	$$\Phi_{x_0, t_0} (x, t ) \doteqdot \frac{1}{ \left( 4 \pi ( t_0 - t) \right)^{n/2} } e^{ - \frac{ | x - x_0|^2}{4 ( t_0 - t) }}
	\qquad ( t< t_0)$$
denote the backwards heat kernel based at $(x_0, t_0)$.
Let 
	$$\phi_{x_0, t_0; r} (x, t) \doteqdot \left( 1 - \frac{ | x - x_0|^2 - 2n ( t_0 - t) }{ r } \right)^3_+
	\qquad ( t \le t_0 )$$
denote the spherically shrinking localization function based at $(x_0, t_0)$ with scale $r > 0$.

Huisken's monotonicity formula and its localized analogue for Brakke flows states
\begin{multline} \label{Huisken's mono forla}
	\int \Phi_{x_0, t_0}( \cdot  , t) \phi_{x_0, t_0; r} ( \cdot , t) d \mu_t 
	- \int \Phi_{x_0, t_0}( \cdot  , s) \phi_{x_0, t_0; r} ( \cdot, s) d \mu_s \\
	\le - \int_s^t \int \left| H + \frac{ ( x - x_0 )^\perp}{2 ( t_0 - \tau ) } \right|^2 \Phi_{x_0, t_0} \phi_{x_0, t_0; r} d \mu_\tau d\tau \le 0
\end{multline}
for all $s < t < t_0$ and $r > 0$ such that $(\mu_t)$ is a Brakke flow on $B_{r + 2n (t_0 - s) } (x_0) \times [s, t]$.
Moreover, for Brakke flows $( \mu_t)_{t \in (a,b)}$ defined in $U \subset \R^N$ and points $x_0 \in B_{2 r_0} (x_0) \subset U$, the Gaussian density
\begin{equation} \label{Gaussian density defn}
	\Theta_\mu ( x_0, t_0 ) \doteqdot \lim_{t \nearrow t_0} \int \Phi_{x_0, t_0} ( \cdot, t) \phi_{x_0, t_0 ; r} ( \cdot , t) d \mu_t
\end{equation}
is well-defined for $t_0 \in (a, b]$ and independent of $r \in (0 , r_0)$.

For an $n$-dimensional Brakke flow $(\mu_t)$ and $\lambda > 0$, define the parabolically dilated Brakke flow $\cl{D}_{x_0, t_0; \lambda} \mu$  based at $(x_0, t_0)$ to be
	$$\left[ \cl{D}_{x_0, t_0; \lambda}  \mu \right]_t ( A) \doteqdot 
	\lambda^n \mu_{t_0 + t \lambda^{-2} } ( \lambda^{-1} A  + x_0).$$
Often, we omit the basepoint $x_0, t_0$ and simply write $\cl{D}_\lambda \mu$ when the basepoint is clear from context.

\subsection{Equivalent Notions of Density}

Recall the density of varifold $V$ at $x_0 \in \R^N$ is given by
	$$\theta_V(x_0) \doteqdot \lim_{\rho \searrow 0} \frac{ \mu_V( B_\rho(x_0))}{\omega_n \rho^n}$$
when the limit exists and where $\omega_n$ denotes the volume of the unit $n$-ball.
In the remainder of the article, we use $\eta_{x_0, \lambda}$ to denote the spatial translation and dilation
	$$\eta_{x_0, \lambda} : \R^N \to \R^N, \qquad \eta_{x_0, \lambda} (x) \doteqdot \lambda ( x - x_0 ).$$
$\eta_{x_0, \lambda}$ naturally induces a map of integer rectifiable $n$-varifolds which we denote by $(\eta_{x_0, \lambda})_\sharp$.
Specifically, if $V = (\Gamma, \theta)$, then $(\eta_{x_0, \lambda})_\sharp V = ( \eta_{x_0, \lambda} (\Gamma) , \theta \circ \eta_{x_0,\lambda}^{-1} )$.
When the basepoint $x_0$ is clear from context or $x_0 =  0$, we shall often simply write $\eta_{\lambda}$ for simplicity.

When $H \in L^\infty L^p_{loc}$ with $p > n$, the monotonicity formula \eqref{simple mono forla eqn} for varifolds gives a characterization of subsequential blow-ups of time-slices.

\begin{lem}[Existence of Tangent Cones of Time-Slices] \label{lem tangent cone}
	Let $(\mu_t)_{t \in (a,b)}$ be an integral $n$-Brakke flow in $U \subset \R^N$ with locally uniformly bounded areas (Assumption \ref{Main Assumption-}) and 
	generalized mean curvature $H  \in L^\infty L^p_{loc}(U \times (a,b))$ for some $p \in (n, \infty]$. 
	Let $(V_t)_{t \in (a, b] }$ be the family of varifolds as in Lemma \ref{lem nice representative}.
	
	For any $(x_0, t_0) \in U \times (a, b]$ and any sequence $\lambda_i \nearrow +\infty$,
	there exists a subsequence (still denoted $\lambda_i$) 
	and an integer rectifiable $n$-varifold $\C$ in $\R^N$ such that
		$$\left( \eta_{ x_0, \lambda_i} \right)_{\sharp} V_{t_0} 
		\rightharpoonup \C \qquad \text{as varifolds in } \R^N.$$
	Moreover, $\C$ is a stationary, dilation invariant varifold in $\R^N$ with
		$$\frac{ \mu_{\C} ( B_r ( 0 ) ) }{ \omega_n r^n } = \theta_{V_{t_0}} (x_0) 
		= \lim_{\rho \searrow 0} \frac{ \mu_{V_{t_0} } ( B_\rho ( x_0 ) ) }{ \omega_n \rho^n }
		\qquad \forall \,  0 < r < \infty.$$
		
	Any $\C$ that arises as such a limit is called a \emph{tangent cone} of $V_{t_0}$ at $x_0$.	
\end{lem}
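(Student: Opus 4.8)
The plan is to apply the compactness theorem for integral varifolds with generalized mean curvature in $L^p_{loc}$ (Lemma \ref{Lem Compactness}) to the rescaled family $V_i \doteqdot (\eta_{x_0,\lambda_i})_\sharp V_{t_0}$, and then to upgrade the limit to a stationary cone using the monotonicity formula \eqref{simple mono forla eqn}. First I would record how rescaling interacts with the generalized mean curvature: if $V$ has generalized mean curvature $H_V$, then $(\eta_{x_0,\lambda})_\sharp V$ has generalized mean curvature $\lambda^{-1} H_V \circ \eta_{x_0,\lambda}^{-1}$, and by the change of variables $x = \eta_{x_0,\lambda}(y)$ one computes, for $p \in (n,\infty)$,
\begin{equation*}
	\| H_{V_i} \|_{L^p(B_R(0), d\mu_{V_i})}^p = \lambda_i^{n-p} \| H_{V_{t_0}} \|_{L^p( B_{R/\lambda_i}(x_0), d\mu_{V_{t_0}})}^p \to 0
\end{equation*}
as $\lambda_i \to \infty$ (and similarly the $L^\infty$ bound scales by $\lambda_i^{-1}\to 0$ when $p=\infty$), since $p > n$ and $H_{V_{t_0}} \in L^p_{loc}$ by Lemma \ref{lem nice representative}(3). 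Meanwhile, by the density bound from the monotonicity formula \eqref{simple mono forla eqn}, the masses $\mu_{V_i}(B_R(0)) = \lambda_i^n \mu_{V_{t_0}}(B_{R/\lambda_i}(x_0))$ are uniformly bounded in $i$ for each fixed $R$ (this is exactly the local area ratio bound for a varifold with $H \in L^p_{loc}$, $p>n$).

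With these two uniform bounds in hand, Lemma \ref{Lem Compactness} yields a subsequence along which $V_i \rightharpoonup \C$ as varifolds in $\R^N$, with $\C$ integer rectifiable, of locally bounded area, and with generalized mean curvature $H_\C \in L^p_{loc}$ satisfying $\|H_\C\|_{L^p(B_R)} \le \liminf_i \|H_{V_i}\|_{L^p(B_R)} = 0$; hence $H_\C = 0$ $\mu_\C$-a.e. and $\C$ is stationary. To see $\C$ is a cone, I would use the monotonicity formula for stationary-up-to-$L^p$ varifolds: the density ratio $\rho \mapsto \mu_V(B_\rho(x_0))/(\omega_n \rho^n)$, suitably corrected by an $L^p$-controlled error term, is monotone, so the limit $\theta_{V_{t_0}}(x_0) = \lim_{\rho\searrow 0} \mu_{V_{t_0}}(B_\rho(x_0))/(\omega_n\rho^n)$ exists. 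Under the rescaling, $\mu_{V_i}(B_r(0))/(\omega_n r^n) = \mu_{V_{t_0}}(B_{r/\lambda_i}(x_0))/(\omega_n (r/\lambda_i)^n) \to \theta_{V_{t_0}}(x_0)$ for every fixed $r>0$; passing this to the varifold limit (using that the convergence is as varifolds, so masses of balls converge at all but countably many radii, then using monotonicity to fill in all radii) gives $\mu_\C(B_r(0))/(\omega_n r^n) = \theta_{V_{t_0}}(x_0)$ for all $0<r<\infty$. Thus the density ratio of $\C$ at the origin is constant; by the equality case of the monotonicity formula for a stationary varifold, $\C$ is dilation invariant, i.e. a cone.

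The main obstacle I anticipate is the careful handling of the convergence of mass ratios: varifold convergence $V_i \rightharpoonup \C$ gives $\mu_{V_i}(\overline{B_r}) \to \mu_\C(\overline{B_r})$ only for radii $r$ with $\mu_\C(\partial B_r) = 0$ (all but countably many $r$), so to conclude the ratio identity for \emph{every} $r \in (0,\infty)$ one must combine this with the monotonicity of $\rho \mapsto \mu_\C(B_\rho)/(\omega_n\rho^n)$ — which holds because $\C$ is stationary — together with left/right continuity of the two monotone quantities. A secondary technical point is confirming that the error terms in the monotonicity formula for $V_i$ (coming from $H_{V_i} \ne 0$) vanish in the limit, which again follows from the scaling computation above since $p>n$; this is what forces the strict inequality $p>n$ and is where the hypothesis is essential.
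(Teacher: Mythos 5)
Your proposal is correct and follows essentially the same route as the paper: scaling the $L^p$ bound on $H$ (with exponent $\lambda_i^{n/p-1} \to 0$ since $p>n$), using the monotonicity formula \eqref{simple mono forla eqn} for the density and uniform area-ratio bounds, applying Lemma \ref{Lem Compactness} to extract a stationary limit, and invoking constancy of the density ratio plus the stationary monotonicity formula to get dilation invariance. Your extra care about mass convergence only at radii with $\mu_\C(\partial B_r)=0$, filled in by monotonicity, is a fine (and slightly more explicit) treatment of a point the paper passes over without comment.
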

\begin{proof}
	By Lemma \ref{lem nice representative}, $V_{t_0}$ has generalized mean curvature $H = H_{V_{t_0}} \in L^p_{loc}( U)$.
	It follows that the dilations $V_i \doteqdot (\eta_{x_0, \lambda_i})_{\sharp} V_{t_0}$ have generalized mean curvature $ H_i \in L^p_{loc}( \lambda_i (U - x_0) )$ with
	\begin{equation} \label{H_i decay est}
		\| H_i \|_{L^p(K , d \mu_{V_i} ) } \le \lambda_i^{\frac{n}{p} - 1 } \| H_{V_{t_0}} \|_{L^p ( \ol{B}_R(x_0), d \mu_{V_{t_0} } ) }
	\end{equation}
	for all $K \subset \R^N$ compact and $i \gg 1$ sufficiently large so that $\frac{1}{\lambda_i} K + x_0 \subset \ol{B}_R(x_0) \Subset U$.
	In particular, $\lim_{i \to \infty} \| H_i \|_{L^p(K , d \mu_{V_i} ) } = 0$.
	
	Since $p > n$, there is the monotonicity formula \eqref{simple mono forla eqn} for integral $n$-varifolds with $H \in L^p_{loc}$ (Proposition \ref{prop simple mono forla}, see also \cite[Ch. 4, \S 4]{SimonGMT}).
	In particular, the density 
		$$\theta_{V_{t_0}} (x_0) \doteqdot \lim_{\rho \searrow 0} \frac{ \mu_{V_{t_0}} (B_\rho(x_0) )}{ \omega_n \rho^n}$$
	is well-defined
	and the blow-up sequence $V_i$ has uniform local area bounds.
	
	It now follows from compactness (Lemma \ref{Lem Compactness})
	that there is a subsequence (still denoted $V_i$) and an integer rectifiable $n$-varifold $\C$ in $\R^N$ such that $V_i \rightharpoonup \C$.
	By \eqref{H_i decay est} and Lemma \ref{Lem Compactness},
	$$\| H_\C \|_{L^p(K, d\mu_\C )} \le \limsup_{i } \| H_{V_i} \|_{L^p (K, d \mu_{V_i} ) } = 0,$$
	which implies $\C$ is stationary.
	Moreover, the convergence $V_i \rightharpoonup \C$ implies
		$$\frac{ \mu_\C ( B_r(0 ) )}{ \omega_n r^n } = \theta_{V_{t_0} } (x_0)	\qquad \forall 0 < r < \infty.$$
	The monotonicity formula \eqref{simple mono forla eqn} for stationary varifolds finally implies $\C$ is dilation invariant, that is
		$$( \eta_{0, \lambda} )_\sharp \C = \C 		\qquad \forall 0 < \lambda <\infty$$
	 (see e.g. \cite[Ch. 8, \S 5]{SimonGMT}).
\end{proof}

We will show in Lemma \ref{lem tangent flow} below that Huisken's monotonicity formula \eqref{Huisken's mono forla} similarly allows us to extract tangent \emph{flows} from \emph{space-time} blow-ups.
First, we show local uniform area \emph{ratio} bounds follow from the local uniform area bounds.

\begin{lem}[Area Ratio Bounds] \label{lem area ratio bound}
	Let $(\mu_t)_{t \in (a,b)}$ be an integral $n$-Brakke flow in $U \subset \R^N$ with locally uniformly bounded areas (Assumption \ref{Main Assumption-}) 
	and generalized mean curvature $H \in L^\infty L^p_{loc}(U \times(a,b))$ for some $p \in (n, \infty]$.
	
	For any $K \Subset U$, there exists $C$ such that
		$$\sup_{t \in ( a , b) } \sup_{B_r(x) \subset K } \frac{ \mu_t ( B_r(x)) }{r^n} \le C.$$
\end{lem}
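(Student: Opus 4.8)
The plan is to reduce the area-ratio bound to the known local uniform area bound from Assumption \ref{Main Assumption-} by applying the local monotonicity formula \eqref{Huisken's mono forla} with the spherically shrinking localization function $\phi_{x_0, t_0; r}$. The point is that, for a ball $B_r(x) \subset K$, we want to bound $\mu_t(B_r(x))/r^n$ using only $\sup_\tau \mu_\tau(K')$ for a slightly enlarged compact set $K' \Subset U$. Fix $K \Subset U$ and choose $r_0 > 0$ and a compact $K' \Subset U$ so that for every $x$ with $B_r(x) \subset K$ (automatically $r$ is bounded, say $r \le r_0$) we have $B_{r + 2n r_0^2}(x) \subset K'$, and so that for the basepoint $(x, t + r^2)$ the flow is a Brakke flow on $B_{r + 2n r^2}(x) \times [t - s, t]$ for appropriate $s$.

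First I would apply \eqref{Huisken's mono forla} based at the space-time point $(x_0, t_0) = (x, t + \sigma)$ for a suitable $\sigma \sim r^2$: comparing the weighted Gaussian mass at time $t$ to the weighted Gaussian mass at an earlier time $s$, the monotonicity inequality gives
\begin{equation*}
	\int \Phi_{x, t+\sigma}(\cdot, t)\, \phi_{x, t+\sigma; \rho}(\cdot, t)\, d\mu_t \le \int \Phi_{x, t+\sigma}(\cdot, s)\, \phi_{x, t+\sigma; \rho}(\cdot, s)\, d\mu_s
\end{equation*}
for an appropriate localization scale $\rho$. On the left, restricting the (nonnegative) integrand to $B_r(x)$ and using that on this ball at time $t$ one has $\Phi_{x, t+\sigma}(\cdot, t) \ge c\, \sigma^{-n/2} \ge c'\, r^{-n}$ and $\phi_{x, t+\sigma; \rho}(\cdot, t) \ge c'' > 0$ (for the right choice of $\sigma$ and $\rho$), the left side is bounded below by a constant multiple of $\mu_t(B_r(x))/r^n$. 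On the right, since $\Phi_{x, t+\sigma}(\cdot, s)$ is a bounded function on $K'$ (with bound depending only on $\sigma \sim r^2$, canceling against nothing problematic) and $\phi$ is bounded by $1$ and supported in $B_{\rho + 2n(\sigma - s)}(x) \subset K'$, the right side is bounded above by $C(n)\, r^{-n} \cdot \mu_s(B_{...}(x)) \cdot$ (a scale-invariant constant), hence by $C(n, K, K')\, \sup_{\tau \in (a,b)} \mu_\tau(K') \cdot r^{-n} \cdot r^n$ after the scaling is tracked carefully — the $r^{-n}$ from $\Phi$ at scale $\sigma \sim r^2$ combines with the $r^n$ hidden in the comparison to leave a clean bound. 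Combining the two estimates yields $\mu_t(B_r(x))/r^n \le C(n, K)\, \sup_\tau \mu_\tau(K') < \infty$, uniformly in $t$, $x$, and $r$.

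I should note that this argument needs $r$ to be bounded above (which is automatic since $B_r(x) \subset K$ forces $r \le \operatorname{diam}(K)$) and needs enough earlier time to apply monotonicity; if $t$ is too close to the initial time $a$ one instead uses the localization function together with the bound $\mu_t(B_r(x)) \le \mu_t(K) \le \sup_\tau \mu_\tau(K)$ directly, which already gives $\mu_t(B_r(x))/r^n \le r^{-n}\sup_\tau \mu_\tau(K)$ — but this blows up as $r \to 0$, so the monotonicity argument is genuinely needed for small $r$ and small elapsed time simultaneously. The cleanest fix is to note that for $t - a$ bounded below by a fixed multiple of $r^2$ the monotonicity argument applies, while for $t - a$ small compared to $r^2$ we are comparing over a short time interval and the Gaussian weight at the initial comparison time $s = a$ (or $s$ slightly above $a$) is still controlled; alternatively one extends the flow slightly backwards or simply restricts attention to $t \in (a + \delta, b)$ and handles the initial layer separately. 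Note also that the mean curvature bound $H \in L^\infty L^p_{loc}$ is not actually used in the proof of this lemma — it enters only via being part of the standing hypotheses; the monotonicity formula \eqref{Huisken's mono forla} for Brakke flows holds regardless. The main obstacle is bookkeeping the scaling: choosing $\sigma$, $\rho$, $s$ as the correct multiples of $r^2$ and verifying that all the constants that appear are genuinely independent of $x$, $t$, and $r$, depending only on $n$, $K$, and $\sup_\tau \mu_\tau(K')$.
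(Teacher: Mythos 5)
Your core idea (apply the localized Huisken monotonicity \eqref{Huisken's mono forla} based at $(x,t+r^2)$, bound the Gaussian integral at time $t$ from below by $c\,r^{-n}\mu_t(B_r(x))$, and bound the right-hand side at an earlier time $s$ using $\sup_\tau \mu_\tau(K')$) is genuinely different from the paper's proof, which never compares different times at all: the paper passes to the time-slice varifolds $V_t$ of Lemma \ref{lem nice representative} and applies the \emph{spatial} monotonicity formula \eqref{simple mono forla eqn} for varifolds with $H\in L^p_{loc}$, $p>n$, comparing the ratio at scale $r$ to the ratio at a fixed scale $\ge \epsilon$ where the crude area bound suffices. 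Your parabolic argument does work for times $t$ with $t-a$ bounded below by a fixed constant: there one can take $t_0-s$ of a fixed size $\delta^2$ depending only on $K$, so that $\Phi_{x,t_0}(\cdot,s)\le(4\pi\delta^2)^{-n/2}$ is a constant and the right-hand side is $\le C(\delta)\sup_\tau\mu_\tau(K')$.

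However, there is a genuine gap in the regime $t\to a^+$ with $r$ small, and the lemma requires the bound for \emph{all} $t\in(a,b)$. Since the flow is only given on the open interval $(a,b)$, the comparison time must satisfy $s>a$, so $t_0-s< (t-a)+r^2$; then either you use $\Phi_{x,t_0}(\cdot,s)\le (4\pi(t_0-s))^{-n/2}$, which blows up as $t\to a^+$, $r\to 0$ and destroys uniformity of $C$, or you bound $\Phi(\cdot,s)\lesssim r^{-n}$ and the support of the cutoff by a ball of radius $\sim r$, which only yields $\mu_t(B_r(x))/r^n\lesssim \mu_s(B_{cr}(x))/r^n$ --- another area ratio at a nearby time, i.e.\ a circular estimate. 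Your proposed fixes do not close this: there is no backward extension of the flow available, and ``handling the initial layer separately'' is exactly the missing step. In fact your remark that the hypothesis $H\in L^\infty L^p_{loc}$ is not used is the root of the problem: without it the statement is false near $t=a$ (one can start a Brakke flow from an integral varifold with locally bounded area but unbounded area ratios, e.g.\ concentric spheres of radii $2^{-k}$ with multiplicities $2^{k}$, and the ratios remain unbounded for $t$ just above $a$), so no argument avoiding the $H$ bound can prove the lemma. The paper's proof uses the $H$ bound essentially: for \emph{every} $t\in(a,b]$ the slice $V_t$ satisfies the Allard-type monotonicity \eqref{simple mono forla eqn}, which transfers the ratio at scale $r$ up to a fixed scale purely spatially, with no dependence on how close $t$ is to $a$. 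To repair your proof you would either need to invoke exactly this spatial monotonicity for the initial layer (at which point it handles all times and the parabolic step becomes superfluous), or find some other mechanism, which the stated hypotheses do not provide without the mean curvature bound.
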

\begin{proof}
	The proof will proceed by combining the monotonicity formula \eqref{simple mono forla eqn} with the local uniform area bounds.
	Given $K \Subset U$, there exists $\epsilon > 0$ such that
		$$K' \doteqdot \ol{B_\epsilon (K) }
		= \ol{ \bigcup_{x \in K } B_\epsilon (x) } \Subset U.$$
	Let
		$$R_0 \doteqdot \sup \{ r > 0 : B_r(x) \subset K \text{ for some } x \},
		\qquad R_1 \doteqdot \max \{ R_0, 1 \}.$$
	For any $B_r(x) \subset K$,
	there exists $R \in [\max \{ \epsilon, r \} , R_1]$ such that $B_r (x) \subset B_R(x) \subset K'$.
	Let $(V_t)_{t \in (a, b]}$ be the family of varifolds as in Lemma \ref{lem nice representative} and let $t \in (a,b)$.
	The monotonicity formula \eqref{simple mono forla eqn} implies
	$$ \frac{ \mu_{V_t} ( B_r(x) ) }{ r^n}
	\le \frac{ \mu_{V_t} ( B_r(x) ) }{ r^n} e^{ \frac{ \| H_{V_t} \| }{ 1 - n/ p } r } + e^{ \frac{ \| H_{V_t} \| }{ 1 - n/ p } r }
	\le \frac{ \mu_{V_t} ( B_R(x) ) }{ R^n} e^{ \frac{ \| H_{V_t} \| }{ 1 - n/ p } R } + e^{ \frac{ \| H_{V_t} \| }{ 1 - n/ p } R }$$
	where $\| H_{V_t} \| = \| H_{V_t} \|_{L^p ( B_R (x) , d\mu_{V_t} ) }$.
	It then follows that 
	\begin{align*}
		\frac{ \mu_t (B_r (x) ) }{ r^n}
		&\le \frac{ \mu_{V_t} (B_r (x) ) }{ r^n}	\\
		&\le \frac{ \mu_{V_t} ( B_R(x) ) }{ R^n} e^{ \frac{ \| H_{V_t} \|_{L^p ( K' )} }{ 1 - n/ p } R } + e^{ \frac{ \| H \|_{L^p(K') }}{ 1 - n/ p } R } \\
		&\le \frac{ \mu_{V_t} ( K' ) }{ \epsilon^n} e^{ \frac{ \| H_{V_t} \|_{L^p ( K' )} }{ 1 - n/ p } R_1 } + e^{ \frac{ \| H \|_{L^p(K') }}{ 1 - n/ p } R_1 } \\
		&\le  \frac{ \sup_{\tau \in (a,b)} \mu_{\tau} ( K' ) }{ \epsilon^n} e^{ \frac{ \| H \|_{L^\infty L^p ( K' \times ( a, b)  )} }{ 1 - n/ p } R_1 } + e^{ \frac{ \| H \|_{L^\infty L^p(K' \times ( a, b) ) }}{ 1 - n/ p } R_1 } < \infty.
	\end{align*}
	Taking the supremum over $t \in (a, b)$ and $B_r(x) \subset K$ completes the proof.
\end{proof}

\begin{lem}[Existence of Tangent Flows] \label{lem tangent flow}
	Let $(\mu_t)_{t \in (a, b)}$ be an integral $n$-Brakke flow in $U \subset \R^N$ with locally uniformly bounded areas and generalized mean curvature $H \in L^\infty L^p_{loc}(U \times (a,b))$ for some $p \in (n, \infty]$.

	For any $(x_0, t_0) \in U \times (a,b]$ and any sequence $\lambda_i \nearrow +\infty$,
	there exists a subsequence (still denoted $\lambda_i$) and 
	an integer rectifiable $n$-varifold $\C$ in $\R^N$ such that for any $t < 0$
		$$\left( \cl{D}_{x_0, t_0; \lambda_i} \mu \right)_t
			\rightharpoonup \mu_\C \qquad \text{as $i \to \infty$ (weakly as measures)}.$$

	Moreover, $\C$ is a stationary, dilation invariant varifold in $\R^N$ with 
		$$\Theta_\mu ( x_0, t_0 ) = \frac{1}{( 4 \pi)^{n/2}} \int e^{- \frac{|x|^2}{4} } d \mu_\C .$$	
		
	Any $\C$ that arises as such a limit is called a \emph{tangent flow} of $(\mu_t)_{t \in (a,b)}$ at $(x_0, t_0)$.	
\end{lem}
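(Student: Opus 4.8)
The plan is to run the standard extraction of a tangent flow via Huisken's monotonicity formula, but using the $H$-bound to ensure the limit is genuinely static. Write $\mu^i \doteqdot \cl{D}_{x_0, t_0; \lambda_i} \mu$ for the parabolically rescaled Brakke flows, defined on $\lambda_i(U - x_0) \times (\lambda_i^2(a - t_0), 0)$, which exhausts $\R^N \times (-\infty, 0)$ as $i \to \infty$. First I would record that each $\mu^i$ is again an integral $n$-Brakke flow, and that the area ratio bounds of Lemma \ref{lem area ratio bound} are scale-invariant, so $\mu^i_t(B_R(x))/R^n$ is bounded uniformly in $i$, in $t$ on compact time intervals, and in $B_R(x)$ in compact regions. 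This, together with the mean curvature bound, puts us in a position to apply the compactness theorem for Brakke flows (in Ilmanen's sense, e.g. \cite[Ch. 7]{Ilmanen93}): passing to a subsequence, there is an integral $n$-Brakke flow $(\nu_t)_{t \in (-\infty, 0)}$ in $\R^N$ with $\mu^i_t \rightharpoonup \nu_t$ for a.e.\ $t < 0$, and in fact for all $t<0$ after passing to the representative of Lemma \ref{lem nice representative}.

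Next I would identify $\nu$ as static. The key point is that the rescaled flows have mean curvature $H^i$ with $\|H^i\|_{L^p(K, d\mu^i_t)} \le \lambda_i^{\frac{n}{p}-1} \|H\|_{L^\infty L^p(\ol{B}_R(x_0)\times(a,b))} \to 0$ since $p > n$, exactly as in \eqref{H_i decay est} in the proof of Lemma \ref{lem tangent cone}. Hence for a.e.\ $t < 0$ the time-slice $\nu_t$ is an integral $n$-varifold with zero generalized mean curvature, i.e.\ stationary, by the compactness Lemma \ref{Lem Compactness} applied slice-wise. Moreover, Huisken's monotonicity formula \eqref{Huisken's mono forla}, applied to the original flow, shows the Gaussian density ratio $r \mapsto \int \Phi_{x_0,t_0}(\cdot, t_0 - r^2)\,\phi\, d\mu_{t_0 - r^2}$ converges to the constant $\Theta_\mu(x_0, t_0)$ as $r \searrow 0$, so after rescaling the limit $\nu$ has $\int \Phi_{0,0}(\cdot, t)\, d\nu_t = \Theta_\mu(x_0, t_0)$ for every $t < 0$; equivalently the error integral on the right of \eqref{Huisken's mono forla} vanishes for $\nu$. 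Thus $H_{\nu_t} + \frac{x^\perp}{2(-t)} = 0$ for a.e.\ $\mu_{\nu_t}$-a.e.\ point, and combined with $H_{\nu_t} = 0$ this forces $x^\perp = 0$ $\mu_{\nu_t}$-a.e., i.e.\ $\nu_t$ is a cone for each $t$. Stationarity plus the monotonicity formula \eqref{simple mono forla eqn} for varifolds then gives dilation invariance: setting $\C \doteqdot \nu_{-1}$, one has $\nu_t = \C$ for all $t < 0$ (the self-similar shrinking relation becomes trivial once the varifold is a stationary cone), proving the claimed static convergence $(\cl{D}_{x_0,t_0;\lambda_i}\mu)_t \rightharpoonup \mu_\C$. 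The density identity $\Theta_\mu(x_0, t_0) = (4\pi)^{-n/2}\int e^{-|x|^2/4}\, d\mu_\C$ is then just the statement that the Gaussian-weighted mass of the static cone equals the limiting Gaussian density, which follows by evaluating $\int \Phi_{0,0}(\cdot, -1)\, d\mu_\C$.

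The main obstacle I anticipate is not any single estimate but the bookkeeping of which convergences hold for \emph{all} $t < 0$ versus merely a.e.\ $t$, and the interplay between the Brakke-flow compactness (which is naturally stated up to countably many bad times) and the varifold compactness Lemma \ref{Lem Compactness} applied to individual slices. One has to be careful that the static cone extracted slice-wise is consistent across slices; the cleanest route is to first extract the Brakke limit $\nu$, then for a single good time $t_0' < 0$ use the $H$-decay to see $\nu_{t_0'}$ is stationary, apply Huisken monotonicity to upgrade it to a cone, and finally invoke the rigidity case of monotonicity to conclude $\nu$ is static and thus equals $\mu_\C$ for $\C = \nu_{t_0'}$ scaled appropriately — at which point all-time convergence follows from the all-time convergence already built into the representative of Lemma \ref{lem nice representative}. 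A secondary subtlety is ensuring the localization functions $\phi_{x_0,t_0;r}$ do not interfere in the limit; since we work on compact space-time regions and let $r \to \infty$ appropriately in the rescaled picture, they converge to $1$ and drop out.
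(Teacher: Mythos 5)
Your proposal is correct and follows essentially the same route as the paper: extract a shrinking Brakke limit via Huisken's monotonicity, the scale-invariant area ratio bounds of Lemma \ref{lem area ratio bound}, and Brakke-flow compactness; use the scaling decay $\|H^i\|_{L^p} \lesssim \lambda_i^{n/p-1} \to 0$ together with Lemmas \ref{Lem Compactness}--\ref{Lem Checking Convergence} to see a (good) time-slice of the limit is stationary; combine stationarity with the shrinker identity $H + \frac{x^\perp}{2(-t)} = 0$ to get $x^\perp = 0$, hence a dilation-invariant cone; and conclude staticity from the self-similar relation. The only cosmetic difference is your a.e.-$t$ bookkeeping, where the paper simply fixes a single good time $t<0$ at which all rescaled slices carry the $L^p$ bound, which suffices.
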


\begin{proof}
	Fix $(x_0, t_0) \in U \times ( a, b]$.
	By Lemma \ref{lem area ratio bound}, $(\mu_t)$ has local uniform area ratio bounds in a neighborhood of $(x_0, t_0)$.
	It follows from Huisken's monotonicity formula \eqref{Huisken's mono forla} and the compactness of Brakke flows with local uniform area bounds that there exists a Brakke flow $(\mu^\infty_t)_{t < 0}$ on $\R^N$ 
	and a subsequence (still denoted $\lambda_i$) such that
		$$\left( \cl{D}_{\lambda_i} \mu \right)_t 
		= \left( \cl{D}_{x_0, t_0; \lambda_i} \mu \right)_t 
		\rightharpoonup \mu^\infty_t
		\qquad \forall t < 0$$ 
	(see \cite[Lemma 8]{IlmanenSurfs} or \cite{White94}).
	Moreover, $\mu^\infty$ is a shrinker for $t < 0$ in the sense that
	\begin{equation} \label{shrinker eqn 1}
		\mu_t ( A) = (-t)^{n/2} \mu_{-1} ( A / \sqrt{ -t} )	\qquad  \forall t < 0
	\end{equation}
	and, for any $t < 0$, $\mu^\infty_{t}$ is an integer rectifiable $n$-varifold with
	\begin{equation} \label{shrinker eqn 2}
		H + \frac{ x^\perp}{2(-t)} = 0		\qquad \text{for } \mu^\infty_{t}\text{-a.e. } x \in \R^N.
	\end{equation}
	Additionally,
		$$\Theta_\mu( x_0, t_0) = \frac{1}{ ( 4 \pi (-t) )^{n/2} } \int e^{- \frac{|x|^2}{4 (-t)} } d \mu^\infty_t
		\qquad  \forall t < 0.$$
	
	Because $(\mu_t)$ is a Brakke flow with $H \in L^\infty L^p_{loc}$,
	there exists $t < 0$ such that, for all $i$,
	$( \cl{D}_{\lambda_i} \mu )_t$ is represented by an integer rectifiable $n$-varifold $V_i$ with $H_{V_i} \in L^p_{loc} ( \lambda_i U )$ and, for any $K \Subset \R^N$,
		$$\| H_{V_i} \|_{L^p( K , d \mu_{V_i} )}
		\le \lambda_i^{\frac{n}{p} - 1} \| H \|_{L^\infty L^p( B_{r_0} (x_0) \times (a,b) ) }$$
	for all $i \gg 1$ sufficiently large such that $\lambda_i^{-1} K + x_0 \subset B_{r_0} ( x_0)$ and $\ol{B_{r_0} (x_0) } \Subset U$. 
	Since $p > n$, 
		$$\lim_{i \to \infty} \| H_{V_i} \|_{L^p ( K , d \mu_{V_i} )} = 0
		\qquad \forall K \Subset \R^N.$$
	Since $\mu_{V_i} \rightharpoonup \mu^\infty_t$,
	the compactness of varifolds with mean curvature bounds Lemma \ref{Lem Compactness} and Lemma \ref{Lem Checking Convergence} imply that $\mu^\infty_t = \mu_\C$ for some integer rectifiable $n$-varifold $\C$ in $\R^N$ which is stationary ($H_\C = 0$).
	It then follows from \eqref{shrinker eqn 2} that $x^\perp = 0$ $\mu_\C$-a.e. and thus $\C$ is dilation invariant (see e.g. \cite[Ch. 8, \S 5]{SimonGMT}).
	Finally, \eqref{shrinker eqn 1} implies $\mu^\infty_t = \mu_\C$ for all $t < 0$.
	This completes the proof.
\end{proof}

A priori, the tangent \emph{cones} from Lemma \ref{lem tangent cone} could be entirely unrelated to the tangent \emph{flows} from Lemma \ref{lem tangent flow}.
Indeed, the two limiting objects arise from entirely different blow-up sequences.
The next theorem, however, proves that the tangent cones from Lemma \ref{lem tangent cone} exactly correspond to tangent flows from Lemma \ref{lem tangent flow}.

\begin{thm} \label{thm blow-ups}
	Let $(\mu_t)_{t \in (a, b)}$ be an integral $n$-Brakke flow in $U \subset \R^N$ with locally uniformly bounded areas and generalized mean curvature $H  \in L^\infty L^p_{loc}(U \times (a,b))$ for some $p \in (n, \infty]$.
	Let $(V_t)_{t \in (a, b] }$ be the family of varifolds as in Lemma \ref{lem nice representative}.
	
	For any $(x_0, t_0) \in U \times (a, b]$,
		$$\theta_{V_{t_0}} (x_0) = \Theta_\mu(x_0, t_0).$$
	For any sequence $\lambda_i \nearrow +\infty$, there exists a subsequence (still denoted $\lambda_i$) such that there are limiting integral $n$-varifolds $\C, \tilde \C$ as in Lemmas \ref{lem tangent cone}, \ref{lem tangent flow} respectively, and in fact $\C = \tilde \C$.
\end{thm}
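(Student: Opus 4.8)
The plan is to establish the equality $\theta_{V_{t_0}}(x_0) = \Theta_\mu(x_0,t_0)$ as two separate inequalities proved by quite different means, and then to leverage this equality, evaluated along a single common blow-up sequence, into the identification $\C = \tilde\C$. The two inequalities are: (i) $\Theta_\mu(x_0,t_0) \le \theta_{V_{t_0}}(x_0)$, obtained from the varifold monotonicity formula for the time-slices $V_t$, and (ii) $\Theta_\mu(x_0,t_0) \ge \theta_{V_{t_0}}(x_0)$, obtained from Brakke's inequality applied to the rescaled flows. I expect (i) to be the main obstacle: it is where the hypothesis $p>n$ is used in an essential way (not merely as a ``vanishing error'' in a blow-up), since it requires reconciling the parabolic Gaussian density of the flow with the purely spatial density of the final time-slice.

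\textbf{Step 1: $\Theta_\mu(x_0,t_0) \le \theta_{V_{t_0}}(x_0)$.} (Note $\theta_{V_{t_0}}(x_0)$ is well-defined by Lemma \ref{lem tangent cone}.) Fix $r>0$ small and $R>0$ so that $\ol{B_R(x_0)} \Subset U$ and $\spt \phi_{x_0,t_0;r}(\cdot,t) \subset B_R(x_0)$ for $t$ near $t_0$, and set $\Lambda \doteqdot \|H\|_{L^\infty L^p(B_R(x_0)\times(a,b))}$, so $\|H_{V_t}\|_{L^p(B_R(x_0),d\mu_{V_t})} \le \Lambda$ for all $t$ by Lemma \ref{lem nice representative}(3). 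Using $0 \le \phi_{x_0,t_0;r} \le 1$ and $\mu_t \le \mu_{V_t}$ (Lemma \ref{lem nice representative}(1)), the Huisken integrand at time $t$ is dominated by the spatial Gaussian density at scale $\sqrt{t_0-t}$ of $V_t$, namely $(4\pi(t_0-t))^{-n/2}\int_{B_R(x_0)} e^{-|x-x_0|^2/(4(t_0-t))}\, d\mu_{V_t}(x)$. Since $V_t$ has $H_{V_t}\in L^p_{loc}$ with $p>n$, the monotonicity formula \eqref{simple mono forla eqn} (Proposition \ref{prop simple mono forla}) bounds $\mu_{V_t}(B_s(x_0))/(\omega_n s^n) \le e^{C\Lambda\rho^{1-n/p}}\,\mu_{V_t}(B_\rho(x_0))/(\omega_n\rho^n)$ for $0<s\le\rho<R$, where the exponent $1-n/p>0$ is decisive; splitting the Gaussian integral at radius $\rho$ and bounding the outer part crudely via the area-ratio bound of Lemma \ref{lem area ratio bound} then gives that the Gaussian density above is at most $e^{C\Lambda\rho^{1-n/p}}\,\mu_{V_t}(B_\rho(x_0))/(\omega_n\rho^n)$ plus an error tending to $0$ as $\sqrt{t_0-t}/\rho \to 0$. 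By \eqref{Huisken's mono forla} the Huisken integrand is nonincreasing in $t$ with limit $\Theta_\mu(x_0,t_0)$, so $\Theta_\mu(x_0,t_0)$ is bounded by the above quantity for every $t<t_0$; letting $t\nearrow t_0$ (using $\mu_{V_t} \rightharpoonup \mu_{V_{t_0}}$ from Lemma \ref{lem nice representative}(4), for admissible $\rho$ with $\mu_{V_{t_0}}(\partial B_\rho(x_0))=0$) and then $\rho \searrow 0$ (so $e^{C\Lambda\rho^{1-n/p}}\to 1$ and $\mu_{V_{t_0}}(B_\rho(x_0))/(\omega_n\rho^n)\to\theta_{V_{t_0}}(x_0)$) yields the claim.

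\textbf{Step 2: common blow-up sequence and $\mu_\C \le \mu_{\tilde\C}$.} Given $\lambda_i \nearrow +\infty$, pass to a subsequence via Lemma \ref{lem tangent cone} so that $(\eta_{x_0,\lambda_i})_\sharp V_{t_0} \rightharpoonup \C$, and then to a further subsequence via Lemma \ref{lem tangent flow} so that also $(\cl D_{x_0,t_0;\lambda_i}\mu)_t \rightharpoonup \mu_{\tilde\C}$ for all $t<0$. Write $\cl M_i \doteqdot \cl D_{x_0,t_0;\lambda_i}\mu$. The key observation is that $(\eta_{x_0,\lambda_i})_\sharp V_{t_0}$ is precisely the final time-slice at $t=0$ furnished by Lemma \ref{lem nice representative} for the Brakke flow $\cl M_i$: indeed $(\cl M_i)_s$ is the spatial rescaling of $\mu_{t_0+s\lambda_i^{-2}}$, the good representatives of Lemma \ref{lem nice representative} are unique and compatible with spatial rescalings, and $\lim_{s\nearrow 0}\mu_{(\cl M_i)_s}$ is the rescaling of $\lim_{\tau\nearrow t_0}\mu_\tau = \mu_{V_{t_0}}$. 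Applying Brakke's inequality to $\cl M_i$ on intervals $[t,s]$ with $t<s<0$, with a time-independent $f\in C^1_c(\R^N)$, $f\ge 0$, and letting $s\nearrow 0$ gives $\int f\, d\mu_{(\eta_{x_0,\lambda_i})_\sharp V_{t_0}} \le \int f\, d(\cl M_i)_t + \int_t^0\!\!\int |\nabla f|\,|H_{\cl M_i}|\, d(\cl M_i)_\tau\, d\tau$; by the scaling of $\|H\|_{L^p}$ under parabolic rescaling (exactly as in the proof of Lemma \ref{lem tangent flow}) together with Lemma \ref{lem area ratio bound}, the error term is $O(\lambda_i^{n/p-1}) \to 0$. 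Letting $i\to\infty$ yields $\int f\, d\mu_\C \le \int f\, d\mu_{\tilde\C}$, hence $\mu_\C \le \mu_{\tilde\C}$ as Radon measures.

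\textbf{Step 3: conclusion.} By Lemma \ref{lem tangent cone}, $\mu_\C(B_r(0)) = \theta_{V_{t_0}}(x_0)\,\omega_n r^n$; by Lemma \ref{lem tangent flow} and the elementary identity $\tfrac{1}{(4\pi)^{n/2}}\int e^{-|x|^2/4}\, d\mu_{\tilde\C} = \mu_{\tilde\C}(B_1(0))/\omega_n$ valid for any dilation invariant varifold, $\mu_{\tilde\C}(B_r(0)) = \Theta_\mu(x_0,t_0)\,\omega_n r^n$. Thus $\mu_\C \le \mu_{\tilde\C}$ forces $\theta_{V_{t_0}}(x_0) \le \Theta_\mu(x_0,t_0)$, which together with Step 1 gives $\theta_{V_{t_0}}(x_0) = \Theta_\mu(x_0,t_0)$. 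Consequently $\mu_\C(B_r(0)) = \mu_{\tilde\C}(B_r(0))$ for all $r>0$; since also $\mu_\C \le \mu_{\tilde\C}$, the nonnegative measure $\mu_{\tilde\C} - \mu_\C$ assigns measure zero to every ball $B_r(0)$ and therefore vanishes. Hence $\mu_\C = \mu_{\tilde\C}$, and since both are integer rectifiable $n$-varifolds they coincide, $\C = \tilde\C$. (In the degenerate case $\theta_{V_{t_0}}(x_0)=0$ all of this holds trivially, with $\C=\tilde\C=0$.) The only genuinely delicate point is Step 1, where the exponent $p>n$ is used essentially.
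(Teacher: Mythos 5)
Your proposal is correct, and half of it coincides with the paper's argument: your Step 2 (Brakke's inequality along the common rescaled sequence, with the $H$-error estimated via H\"older, the parabolic scaling $\|H_i\|_{L^p} \lesssim \lambda_i^{n/p-1}$, and the area-ratio bounds of Lemma \ref{lem area ratio bound}) is exactly the paper's proof that $\mu_\C \le \mu_{\tilde \C}$, and your Step 3 endgame (equal ball masses plus $\mu_\C \le \mu_{\tilde\C}$ forces $\mu_\C = \mu_{\tilde\C}$, hence $\C = \tilde\C$) is the same as well. Where you genuinely diverge is the reverse inequality. The paper never proves $\Theta_\mu(x_0,t_0) \le \theta_{V_{t_0}}(x_0)$ directly; instead it compares ball ratios of the two limits by applying the spatial monotonicity formula \eqref{simple mono forla eqn} to the time-slices $\mu_{\tau\lambda_i^{-2}}$ at the pair of radii $r\lambda_i^{-1}$ and a fixed $\rho$, then uses $\mu_{\tau\lambda_i^{-2}} \rightharpoonup \mu_{V_{t_0}}$ (Lemma \ref{lem nice representative}) and sends $\rho \searrow 0$, obtaining $\mu_{\tilde\C}(B_r)/r^n \le \omega_n\theta_{V_{t_0}}(x_0) = \mu_\C(B_r)/r^n$; the density identity $\theta_{V_{t_0}}(x_0)=\Theta_\mu(x_0,t_0)$ then falls out at the end. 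You instead prove $\Theta_\mu \le \theta_{V_{t_0}}$ at the original scale by dominating Huisken's localized Gaussian integrand by the Gaussian integral of $\mu_{V_t}$ and comparing that with area ratios. This works and uses the same two essential tools (the $p>n$ monotonicity formula and the left-continuity $\mu_{V_t}\rightharpoonup\mu_{V_{t_0}}$), and it has the mild advantage of making the density identity an explicit intermediate step independent of the choice of blow-up subsequence; its cost is one extra (standard, but in your write-up only sketched) estimate, namely the layer-cake/co-area domination of $(4\pi(t_0-t))^{-n/2}\int_{B_\rho} e^{-|x-x_0|^2/4(t_0-t)}\,d\mu_{V_t}$ by $\sup_{0<\sigma\le\rho}\mu_{V_t}(B_\sigma)/(\omega_n\sigma^n)$ — ``splitting at radius $\rho$'' alone does not give this; you need the ratio bound at \emph{all} scales below $\rho$, which you do invoke, fed into the layer-cake identity. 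Two cosmetic imprecisions: the monotonicity formula of Proposition \ref{prop simple mono forla} carries the additive term $e^{\frac{\|H\|}{1-n/p}\rho^{1-n/p}}-1$, which you dropped (harmless, since it also vanishes as $\rho\searrow 0$), and in Step 2 the full claim that $(\eta_{x_0,\lambda_i})_\sharp V_{t_0}$ is ``the'' Lemma \ref{lem nice representative} representative of the rescaled flow is more than you need — the identity $\lim_{s\nearrow 0}\mu_{(\cl{M}_i)_s} = \mu_{(\eta_{x_0,\lambda_i})_\sharp V_{t_0}}$, which follows directly from Lemma \ref{lem nice representative}(1) and the definition of $\cl{D}_{x_0,t_0;\lambda_i}$, suffices.
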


\begin{proof}
	By translation, we can assume without loss of generality that $(x_0, t_0) = ( \mathbf 0 , 0) \in U \times (a,b]$.
	For any sequence $\lambda_i \nearrow +\infty$, denote the rescalings
		$$V^i_0 \doteqdot \left( \eta_{\lambda_i} \right)_{\sharp} V_{0},
		\qquad
		\mu^i_t \doteqdot (\cl{D}_{\lambda_i} \mu)_t.$$
	Lemmas \ref{lem tangent cone} and \ref{lem tangent flow} imply there exists a subsequence (still denoted by index $i$) such that 
		$$V^i_0 \rightharpoonup \C, \qquad \text{and} \qquad
		\mu^i_t \rightharpoonup \mu_{\tilde \C}	\quad (\forall t < 0),$$
	where $\C, \tilde \C$ are stationary, dilation invariant, integral $n$-varifolds.
	Additionally,
	\begin{gather*}
		\theta_{V_{0}} (\mathbf 0) = \frac{\mu_\C ( B_r(0))}{\omega_n r^n} \quad ( \forall 0 < r < \infty)
		\qquad \text{ and}	\\ 
		\Theta_\mu ( \mathbf 0, 0) = \frac{1}{ ( 4 \pi (-t) )^{n/2} } \int e^{- \frac{|x|^2}{4(-t)}} d \mu_{\tilde \C} 
		\quad ( \forall t < 0).
	\end{gather*}

	We first claim that $\mu_\C \le \mu_{\tilde \C}$.
	Let $f \in C^1_c( \R^N)$ with $f \ge 0$. Say $\supp f \subset B_R$.
	It follows that
	\begin{align*}
		\mu_\C (f)
		={}& \lim_{i \to \infty} \mu_{V^i_0} (f)	\\
		={}& \lim_{i \to \infty} \lim_{t' \nearrow 0} \mu^i_{t'} (f) && (\text{Lemma \ref{lem nice representative}})	\\
		\le{}& \lim_{i \to \infty} \lim_{t' \nearrow 0}\left( \mu^i_{-1} (f) + \int_{-1}^{t'} \int  H_i \cdot \nabla f - |H_i|^2 f \, d\mu^i_t d t \right)	\\
		\le{}& \lim_{i \to \infty} \mu^i_{-1} (f)
		+ \lim_{i \to \infty}  \int_{-1}^{0} \int  |H_i|  |\nabla f|  \, d\mu^i_t d t 	
			&& ( f \ge 0)	\\
		={}& \mu_{\tilde \C} (f) + \lim_{i \to \infty}  \int_{-1}^{0} \int  |H_i|  |\nabla f|  \, d\mu^i_t d t. 
	\end{align*}
	For any $0 < \delta \ll 1$, the integral term can be estimated as follows:
	\begin{align*}
		&\lim_{i \to \infty}  \int_{-1}^{0} \int  |H_i|  |\nabla f|  \, d\mu^i_t d t	\\
		\le{}& \lim_{i \to \infty} \|  f \|_{C^1} \int_{-1}^0 \int_{B_R} |H_i| d\mu^i_t dt	\\
		={}& \lim_{i \to \infty} \|  f \|_{C^1} \lambda_i^{n-1} \int_{-1}^0 \int_{B_{R\lambda_i^{-1}} } |H| d\mu_{t\lambda_i^{-2}} dt	\\
		={}&\lim_{i \to \infty} \|  f \|_{C^1} \lambda_i^{n+1} \int_{-\lambda_i^{-2}}^0 \int_{B_{R\lambda_i^{-1}} } |H| d\mu_{\tau } d \tau	
			&& ( \tau = t \lambda_i^{-2} )\\
		\le{}&\lim_{i \to \infty} \|  f \|_{C^1} \lambda_i^{n+1} 
		\int_{-\lambda_i^{-2}}^0 \left( \int_{B_{R\lambda_i^{-1}} } |H|^p d\mu_{\tau } \right)^{1/p} 
		\mu_\tau( B_{R \lambda_i^{-1} } )^{ \frac{p-1}{p} } d \tau 	\\
		\le{}& \| f \|_{C^1} \| H \|_{L^\infty L^p ( B_\delta \times (a, b) ) }
		\lim_{i \to \infty} \lambda_i^{n-1} 
		\left( \sup_{-\lambda_i^{-2} \le \tau \le 0 } \mu_\tau ( B_{R \lambda_i^{-1} } ) \right)^{\frac{p-1}{p}} \\
		={}& \| f \|_{C^1} \| H \|_{L^\infty L^p ( B_\delta \times (a, b) ) } R^{  \frac{ n(p-1)}{p} } 
		\lim_{i \to \infty} \lambda_i^{-1 + \frac{n}{p} } 
		\left( \sup_{-\lambda_i^{-2} \le \tau \le 0 } \frac{ \mu_\tau ( B_{R \lambda_i^{-1} } ) }{ R^n \lambda_i^{-n} }  \right)^{\frac{p-1}{p}}.
	\end{align*}
	By Lemma \ref{lem area ratio bound}, the $\frac{ \mu_\tau ( B_{R \lambda_i^{-1} } ) }{ R^n \lambda_i^{-n} }$ term is uniformly bounded above by say $C_0 < \infty$.
	Since $p >n$,
	$$\mu_\C(f) \le \mu_{\tilde \C} (f) + C_0 \| f \|_{C^1} \| H \|_{L^\infty L^p ( B_\delta \times (a, b) ) } R^{  \frac{ n(p-1)}{p} } C_0 \lim_{i \to \infty} \lambda_i^{-1 + \frac{n}{p} }
	= \mu_{\tilde \C}(f).$$
	Thus, $\mu_\C(f) \le \mu_{\tilde \C} (f)$ for all $f \in C^1_c (\R^N)$ with $f \ge 0$.
	It then follows from a limiting argument that $\mu_\C \le \mu_{\tilde \C}$. 
	
	Note that, since $\C$ is dilation invariant,
		$$\frac{ \mu_\C ( B_r( \mathbf 0 ) )}{ \omega_n r^n} = \frac{1}{ ( 4 \pi (-t) )^{n/2} } \int e^{- \frac{ |x|^2}{ 4 (-t)} } d \mu_\C 
		\qquad \forall 0 < -t, r < \infty$$
	and similarly for $\tilde \C$.
	$\mu_\C \le \mu_{\tilde \C}$ implies
		$$\frac{ \mu_\C ( B_r( \mathbf 0 ) )}{ \omega_n r^n } \le \frac{ \mu_{\tilde \C} ( B_r( \mathbf 0 ) )}{ \omega_n r^n }		\qquad \forall 0 < r < \infty.$$
		
	We claim that the reverse inequality
		$$\frac{ \mu_\C ( B_r( \mathbf 0 ) )}{ \omega_n r^n } \ge \frac{ \mu_{\tilde \C} ( B_r( \mathbf 0 ) )}{ \omega_n r^n }$$	
	also holds (for some or equivalently all $0 < r < \infty$).
	First, observe there exists a time $\tau < 0$ such that $\mu_{ \tau \lambda_i^{-2} }$ is an integral $n$-varifold with generalized mean curvature $ H_{ \tau \lambda_i^{-2}} \in L^p_{loc} (U)$ for all $i$ and
		$$\| H_{ \tau \lambda_i^{-2}} \|_{L^p(K)} \le \| H \|_{L^\infty L^p(K \times (a,b))} < \infty \qquad (\forall K \Subset U) ,$$
	where as usual $H$ denotes the mean curvature of the flow $(\mu_t)$.
	Fix $R> 0$ such that $\ol{B}_R = \ol{B}_R(\mathbf 0 ) \Subset U$.
	Let 
		$$F_i ( \rho ) \doteqdot e^{ \| H_{\tau \lambda_i^{-2} } \|_{L^p( B_R )} \frac{1}{ 1 - \frac n p} \rho^{1 - \frac np} } \qquad \text{and}
		\qquad \ol{F}(\rho) \doteqdot e^{ \| H \|_{L^\infty L^p(B_R \times( a,b))} \frac{1}{ 1 - \frac np} \rho^{1 - \frac np} }$$
	as in the monotonicity formula \eqref{simple mono forla eqn}.
	Note
		$$1 \le F_i (\rho ) \le \ol{F}( \rho )	\qquad \forall \rho > 0.$$
	It follows from the monotonicity formula \eqref{simple mono forla eqn} that, for any $0 < \rho < R$ and any $r > 0$,
	\begin{align*}
		r^{-n} \mu_{\tilde \C} ( B_r )
		&= \lim_{i \to \infty} r^{-n} \mu^i_{\tau}( B_r)	\\
		&= \lim_{i \to \infty} \frac{\mu_{ \tau \lambda_i^{-2}}( B_{r \lambda_i^{-1}} ) }{ r^n \lambda_i^{-n}}	\\
		&= \lim_{i \to \infty} \frac{1}{ F_i( r \lambda_i^{-1} ) } \left( F_i( r \lambda_i^{-1} ) \frac{\mu_{ \tau \lambda_i^{-2}}( B_{r \lambda_i^{-1}} ) }{ r^n \lambda_i^{-n}} + F_i( r \lambda_i^{-1} ) - F_i ( r \lambda_i^{-1} ) \right)  \\
		&\le \limsup_{i \to \infty} \frac{1}{ F_i( r \lambda_i^{-1} ) } 
		\left[ F_i ( \rho ) \frac{ \mu_{\tau  \lambda_i^{-2} } ( B_\rho ) }{ \rho^n} + F_i( \rho ) - F_i ( r \lambda_i^{-1} ) \right]	
		&& \eqref{simple mono forla eqn}	\\
		&\le \limsup_{i \to \infty} 
		\left[ \ol{F}(\rho ) \frac{ \mu_{\tau \lambda_i^{-2} } ( B_\rho ) }{ \rho^n} + \ol{F}( \rho ) - 1 \right]	\\
		&=  \ol{F}( \rho ) \frac{ \mu_{V_{0}} ( B_\rho ) }{ \rho^n} +  \ol{F}( \rho ) - 1 .
	\end{align*}
	Thus,
		$$\frac{\mu_{\tilde \C } ( B_r ) }{ r^n} \le \ol{F}( \rho ) \frac{ \mu_{V_{0}} ( B_\rho ) }{ \rho^n} +   \ol{F}( \rho ) - 1  .$$
	Taking $\rho \searrow 0$ reveals
		$$ \frac{ \mu_{\tilde \C} ( B_r ) }{ r^n} 
		\le \lim_{\rho \searrow 0} \ol{F}( \rho ) \frac{ \mu_{V_{0}} ( B_\rho ) }{ \rho^n} +   \ol{F}( \rho ) - 1
		= \lim_{\rho \searrow 0} \frac{ \mu_{V_{0}} ( B_\rho ) }{ \rho^n} 
		= \omega_n \theta_{V_0} ( \mathbf 0)
		= \frac{ \mu_\C ( B_r ) }{r^n}.$$
	
	In summary, we have shown that 
		$$\mu_\C \le \mu_{\tilde \C} \qquad \text{and} \qquad \frac{ \mu_\C( B_r) }{\omega_n r^n} \ge \frac{ \mu_{\tilde \C}( B_r) }{\omega_n r^n} \quad \forall 0 < r < \infty.$$
	In particular, $\mu_\C( B_r )= \mu_{\tilde \C} (B_r)$ for all $0 < r < \infty$.	
	It follows that $\C = \tilde \C$.
	Indeed, if not, then there exists a bounded subset $A \subset \R^N$ such that $\mu_\C(A) < \mu_{\tilde \C} (A)$.
	Taking $R > 0$ large enough so that $B_R \supset A$ would then imply
	$$\mu_\C (B_R) = \mu_\C ( B_R \setminus A ) + \mu_\C (A)
		< \mu_{\tilde \C} ( B_R \setminus A ) + \mu_{\tilde \C}(A)
		= \mu_{\tilde \C} (B_R) = \mu_\C (B_R)$$
	a contradiction.	
	
	Thus, $\C = \tilde \C$ and in particular
		$$\theta_{V_0} ( \mathbf 0) = \Theta_\mu ( \mathbf 0, 0).$$
	This completes the proof.
\end{proof}

As an immediate consequence of Theorem \ref{thm blow-ups}, we deduce that the uniqueness of tangent \emph{cones} of time-slices is equivalent to the uniqueness of tangent \emph{flows}. 

\begin{cor} \label{cor uniqueness equivalence}
	Let $(\mu_t)_{t \in (a, b)}$ be an integral $n$-Brakke flow in $U \subset \R^N$ with locally uniformly bounded areas and generalized mean curvature $H  \in L^\infty L^p_{loc}(U \times (a,b))$ for some $p \in (n, \infty]$.
	Let $(V_t)_{t \in (a, b] }$ be the family of varifolds as in Lemma \ref{lem nice representative}.
	
	Let $(x_0, t_0) \in U \times ( a, b]$.
	Then
	\begin{multline*}
		\{ \C : \C \text{ is a tangent cone of $V_{t_0}$ at $x_0$} \} \\
		= \{  \C : \mu_{ \C} \text{ is a tangent flow of $(\mu_t)$ at $(x_0, t_0)$} \}.
	\end{multline*}

	In particular, the tangent cone $V_{t_0}$ at $x_0$ is unique if and only if the tangent flow of $(\mu_t)$ at $(x_0, t_0)$ is unique.
	In other words,
		$$( \eta_{x_0, \lambda_i})_\sharp V_{t_0} \rightharpoonup \C$$
	for every sequence $\lambda_i \nearrow +\infty$ if and only if
		$$( \cl{D}_{x_0, t_0; \lambda_i} \mu )_t \rightharpoonup \mu_\C \qquad ( \forall t < 0)$$
	for every sequence $\lambda_i \nearrow +\infty$.
\end{cor}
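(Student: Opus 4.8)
The plan is to read everything off Theorem \ref{thm blow-ups} together with the elementary fact that a weakly convergent sequence and each of its subsequences have the same limit; no further analysis is required.

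First I would establish the inclusion $\{ \C : \C \text{ is a tangent cone of } V_{t_0} \text{ at } x_0 \} \subseteq \{ \C : \mu_\C \text{ is a tangent flow of } (\mu_t) \text{ at } (x_0, t_0) \}$. Given such a tangent cone $\C$, pick $\lambda_i \nearrow +\infty$ with $(\eta_{x_0, \lambda_i})_\sharp V_{t_0} \rightharpoonup \C$. Applying Theorem \ref{thm blow-ups} to this sequence and passing to a subsequence (not relabeled), one obtains integral $n$-varifolds $\C'$ and $\tilde{\C'}$ as in Lemmas \ref{lem tangent cone} and \ref{lem tangent flow}, with $(\eta_{x_0, \lambda_i})_\sharp V_{t_0} \rightharpoonup \C'$, with $(\cl{D}_{x_0, t_0; \lambda_i} \mu)_t \rightharpoonup \mu_{\tilde{\C'}}$ for every $t < 0$, and with $\C' = \tilde{\C'}$. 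Since the full sequence converges to $\C$, so does the subsequence, hence $\C' = \C$; therefore $(\cl{D}_{x_0, t_0; \lambda_i} \mu)_t \rightharpoonup \mu_\C$ for all $t < 0$, i.e. $\mu_\C$ is a tangent flow.

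The reverse inclusion is symmetric. Given a tangent flow $\mu_\C$, pick $\lambda_i \nearrow +\infty$ with $(\cl{D}_{x_0, t_0; \lambda_i} \mu)_t \rightharpoonup \mu_\C$ for all $t < 0$, and apply Theorem \ref{thm blow-ups} to extract a subsequence along which $(\eta_{x_0, \lambda_i})_\sharp V_{t_0} \rightharpoonup \C''$ and $(\cl{D}_{x_0, t_0; \lambda_i} \mu)_t \rightharpoonup \mu_{\C''}$, where $\C''$ is the common limit. Uniqueness of weak limits gives $\mu_{\C''} = \mu_\C$, and since an integer rectifiable $n$-varifold is determined by its weight measure (the fact invoked at the end of the proof of Theorem \ref{thm blow-ups}), we conclude $\C'' = \C$; hence $\C$ is a tangent cone of $V_{t_0}$ at $x_0$.

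The two inclusions give the stated set equality. The ``unique iff unique'' assertion then follows immediately, since uniqueness of the tangent cone (resp. tangent flow) means exactly that the left-hand (resp. right-hand) set is a singleton, and the equality $\theta_{V_{t_0}}(x_0) = \Theta_\mu(x_0, t_0)$ is already part of Theorem \ref{thm blow-ups}. I do not anticipate any real obstacle here: the only thing to be careful about is the routine subsequence bookkeeping, as all the substance sits inside Theorem \ref{thm blow-ups}.
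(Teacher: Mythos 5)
Your proposal is correct and matches the paper's intent exactly: the paper offers no separate argument, presenting the corollary as an immediate consequence of Theorem \ref{thm blow-ups}, and your write-up simply makes the routine subsequence bookkeeping explicit (including the fact, used implicitly at the end of the proof of Theorem \ref{thm blow-ups} and explicitly in Lemma \ref{Lem Checking Convergence}, that an integer rectifiable varifold is determined by its weight measure). No gaps.
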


\begin{remark}
	Let $(\mu_t)_{t \in (a,b)}$ be an integral Brakke flow with mean curvature bounds as in Theorem \ref{thm blow-ups}, and consider the associated flow of varifolds $(V_t)_{t \in (a,b]}$ as in Lemma \ref{lem nice representative}.
	By combining Theorem \ref{thm blow-ups} with White's local regularity theorem \cite{White05} (and its generalization to integral $n$-Brakke flows \cite{ST22}),
	it follows that
	if $t_0 \in (a,b]$ and $V_{t_0}$ also has \emph{unit density} (i.e. $\theta_{V_{t_0}}(x) = 1$ for $\mu_{V_{t_0}}$-a.e. $x \in U$),
	then the singular set of the flow $\cl{M} = ( V_t)_{t \in (a,t_0]}$ in the $t=t_0$ time-slice has $n$-dimensional Hausdorff measure 0, that is $\cl{H}^n(\text{sing}_{t_0} \cl{M} ) = 0$.
	This recovers Brakke's main regularity theorem \cite[6.12]{Brakke78}
	in the special case of flows with mean curvature bounds (see also \cite[Theorem 5.3]{Ecker04} and \cite[Theorem 3.2]{KT14}).
	We refer the interested reader to \cite{Brakke78, Ecker04, KT14} for precise definitions.
\end{remark}

%%%%%%%%%%%%%%%%%%%%%%%%%%%%%%%%%%%%%%%%%%%%%%%%%%%%%%%%%
\section{Uniqueness of Tangent Flows Given By Regular Cones} \label{sect Uniqueness of Tangent Flows}
%%%%%%%%%%%%%%%%%%%%%%%%%%%%%%%%%%%%%%%%%%%%%%%%%%%%%%%%%

\begin{definition}
	Define the \emph{link} $L(\C)$ of a dilation invariant set $\C \subset \R^N$ to be
		$$L( \C ) \doteqdot \C \cap \Sph^{N-1}.$$
	$\C$ is said to be a \emph{regular cone} if $L(\C)$ is a smooth, (properly) embedded submanifold of $\Sph^{N-1}$.
\end{definition}

If $\C \subset \R^N$ is a regular cone, then $\C \setminus \{ \mathbf 0 \}$ is a smooth submanifold of $\R^N$ and we write $\C^n$ when $\C \setminus \{ \mathbf 0 \}$ has dimension $n$.
Note that a regular cone $\C^n \subset \R^N$ naturally gives a dilation invariant integral $n$-varifold (of multiplicity one) with associated measure $\cl{H}^n \mres \C$ on $\R^N$.

The main result of this section is the following uniqueness result which restates Theorem \ref{meta thm 2}:

\begin{thm} \label{thm uniqueness regular cones}
	Let $(\mu_t)_{t \in (a,b)}$ be an integral $n$-Brakke flow in $U \subset \R^N$ with locally uniformly bounded areas and generalized mean curvature $H \in L^\infty L^\infty_{loc}(U \times (a,b))$.
	If $(\mu_t)$ has a tangent flow $\mu_\C$ at $(x_0,t_0) \in U \times (a, b]$ given by a regular cone $\C^n$ (with multiplicity one),
	then $\mu_\C$ is the unique tangent flow of $(\mu_t)$ at $(x_0, t_0)$.
\end{thm}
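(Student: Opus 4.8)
By Corollary \ref{cor uniqueness equivalence}, it suffices to show that the tangent \emph{cone} of $V_{t_0}$ at $x_0$ is unique, that is, that $V_{t_0}$ has a unique tangent cone equal to $\C$. This reduces the parabolic uniqueness problem to an elliptic one about integral $n$-varifolds with $H \in L^\infty_{loc}$ (indeed $H\in L^\infty$, since $p=\infty$). The plan is to invoke the uniqueness theory for tangent cones of such varifolds: when one tangent cone is a \emph{regular} (smooth link) cone with multiplicity one, Allard's regularity theorem applies at the relevant scale, so the varifold $V_{t_0}$ is, sufficiently close to $x_0$ and away from the vertex, a smooth graph over $\C$ decaying to zero. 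One then runs a Łojasiewicz–Simon argument à la Simon to conclude that the blow-ups of $V_{t_0}$ at $x_0$ converge (not merely subsequentially) to $\C$.

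In more detail, the steps I would carry out are: (i) Translate so $(x_0,t_0)=(\mathbf 0,0)$ and pass to the varifold $V_0$ with $H_{V_0}\in L^\infty_{loc}(U)$ from Lemma \ref{lem nice representative}; by Theorem \ref{thm blow-ups} and the hypothesis, $\C$ is a tangent cone of $V_0$ at $\mathbf 0$ with $\theta_{V_0}(\mathbf 0)=\Theta_\mu(\mathbf 0,0)=\mathcal H^n(L(\C))/(n\omega_n)\cdot\text{(normalization)}$, and $\C$ is a regular stationary cone of multiplicity one. (ii) Use Allard's theorem together with the dilation-invariance of $\C$ and the monotonicity formula \eqref{simple mono forla eqn} to show that, for some $r_0>0$, the annular pieces $V_0 \mres (B_{2\rho}\setminus B_\rho)$ for $\rho < r_0$ are smooth graphs of functions $u_\rho$ over $\C\cap(B_{2\rho}\setminus B_\rho)$ with $C^1$-norm (after rescaling to unit size) tending to $0$ as $\rho\searrow 0$; this is the statement that $V_0$ is ``$C^1$-close to $\C$ at all small scales,'' using that the $L^\infty$ bound on $H$ gives the right scaling $\|H_{V_i}\|_\infty\sim\lambda_i^{-1}\to 0$ along dilations, exactly as in \eqref{H_i decay est}. (iii) Apply Simon's Łojasiewicz–Simon inequality for the area functional on the link, in the form adapted to stationary-type varifolds with a small mean-curvature error term (the error $H_{V_0}$ enters the monotonicity/evolution of the rescaled graphs but with a favorable power of the scale, so it is absorbed), to deduce that the rescalings $(\eta_{0,\lambda})_\sharp V_0$ form a Cauchy family as $\lambda\to\infty$ and hence converge to a unique limit, which must be $\C$.

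The main obstacle is step (iii): making the Łojasiewicz–Simon machinery run in the presence of the inhomogeneous term $H_{V_0}$, i.e.\ for varifolds that are \emph{not} stationary but only have small mean curvature at small scales. One must track how the $L^\infty$ mean-curvature error propagates through the graphical reduction and the monotonicity-type identity for the normalized area, verify that its contribution decays fast enough (like a positive power of the scale) to be dominated by the Łojasiewicz gap, and handle the regularity of the link $L(\C)$ and the associated analytic dependence of the area functional. I expect this to follow the template of Simon's original argument and its later adaptations (e.g.\ to mean curvature flow self-shrinkers and to minimal-surface singularities), but the bookkeeping of the error term and the uniform graphical estimates near the vertex is where the real work lies. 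Steps (i) and (ii) are, by contrast, routine given Theorem \ref{thm blow-ups}, Allard's theorem, and the compactness results in the appendix.
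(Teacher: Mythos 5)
Your reduction via Corollary \ref{cor uniqueness equivalence} to uniqueness of the tangent cone of $V_{t_0}$ at $x_0$, and the plan to then invoke Simon's tangent-cone uniqueness theory after an Allard-type graphical estimate at small scales, is exactly the paper's framing: the paper also applies \cite[\S 7, Theorem 5]{Simon83} to the time-slice $V_{t_0}$. Steps (i) and (ii) of your outline are essentially the first part of the paper's proof.

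However, your step (iii) — which you yourself flag as the real work — contains a genuine gap, and the mechanism you propose for closing it is not the right one. Simon's theorem does not apply to an arbitrary integral varifold with an $L^\infty$ mean-curvature error that merely decays linearly under rescaling; it requires the regularity hypothesis \cite[equation (7.23)]{Simon83}, which demands that whenever the varifold is a small $C^{1,\alpha}$-graph over $\C$ in an annulus, the mean curvature satisfies scale-invariant interior $C^2$ estimates. For the time-slice $V_{t_0}$ viewed purely elliptically, $H_{V_{t_0}}$ is just a bounded measurable function satisfying no equation, so no amount of bookkeeping of the $L^\infty$ smallness will produce the derivative control on the error that the \L{}ojasiewicz--Simon iteration needs; treating the problem as one about ``varifolds with small bounded mean curvature'' loses exactly the information required. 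The paper's essential new input is parabolic: Lemma \ref{lem graphicality propogates} shows (via a blow-up contradiction using Theorem \ref{thm blow-ups} and White's local regularity) that $C^{1,\alpha}$-graphicality over $\C$ on an annulus at time $t_0$ propagates outward in space and backward in time, so that near the annulus the flow is a smooth mean curvature flow; then Lemma \ref{lem interior H ests} uses the evolution equation $\partial_t H = \Delta H + A*\nabla H + \nabla A * H + A*A*H$ together with interior parabolic estimates to get $|\nabla H_{V_{t_0}}| + \sigma |\nabla^2 H_{V_{t_0}}| \lesssim \sigma^{-1}\|H\|_{L^\infty L^\infty}$, which is precisely the verification of (7.23). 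This is also where the hypothesis $H \in L^\infty L^\infty_{loc}$ (rather than $L^\infty L^p_{loc}$, $p>n$) and the flow structure are used in an essential way; your proposal never uses the time direction beyond Corollary \ref{cor uniqueness equivalence}, so as written it cannot be completed.
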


The proof essentially follows from \cite[\S 7, Theorem 5]{Simon83} applied to the time slice $V_{t_0}$.
However, \cite[\S 7, Theorem 5]{Simon83} requires a regularity assumption \cite[equation (7.23)]{Simon83} which we must verify for $V_{t_0}$.
Informally, this regularity assumption states that if $V_{t_0}$ is a small $C^{1, \alpha}$-graph over the regular cone $\C$ in an annulus, then the mean curvature $H_{V_{t_0}}$ of $V_{t_0}$ has interior $C^2$-bounds.

Since $V_{t_0}$ comes from a Brakke flow, we can prove $V_{t_0}$ satisfies this regularity assumption \cite[equation (7.23)]{Simon83} as follows:
\begin{enumerate}
	\item show that $C^{1, \alpha}$-graphicality propagates outward in space and backward in time (Lemma \ref{lem graphicality propogates}), and
	
	\item apply interior estimates to improve the $C^{1, \alpha}$ bounds to $C^\infty$ estimates for $V_t$ and apply interior estimates to the evolution equation for $H = H_{V_t}$ to obtain $\| H \|_{C^2} \lesssim \| H \|_{C^0}$ (Lemma \ref{lem interior H ests}).
\end{enumerate}
The remainder of this section rigorously carries out this argument to prove Theorem \ref{thm uniqueness regular cones}.

\begin{definition}
	In what follows we use $A_{r, R}( x_0)$ to denote the open annulus $A_{r, R}(x_0) = B_R( x_0) \setminus \ol{ B}_r (x_0) $ and $A_{r, R} = A_{r, R} (\mathbf 0)$.
	
	For $\C^n \subset \R^N$ a regular cone, we slightly abuse notation and write $u : \C \cap A_{r, R} \to T^\perp \C$ to mean a function
	 $u : \C \cap A_{r, R} \to \R^N$ such that $u(x) \in T^\perp_x \C$ for all $x \in \C \cap A_{r,R}$.
	 For $u : \C \cap A_{r,R} \to T^\perp \C$, denote
		$$G_{r, R} (u) \doteqdot \left \{  \frac{ x + u(x) }{ \sqrt{ 1 + \frac{ |u(x)|^2 }{ |x|^2} } } : x \in \C \cap A_{r, R} \right\} \subset \R^N.$$
\end{definition}

Note that if $\C$ is a regular cone and $u : \C \cap A_{r, R} \to T^\perp \C$ is a $C^2$-function with $\frac{ u(x)}{|x|}$ sufficiently small in $C^1$, then $G_{r, R}(u)$ is a properly embedded $C^2$-submanifold.

\begin{definition}
	For $\C$ a regular cone, let $\| \cdot \|_{C^k_*}$ and $\| \cdot \|_{C^{k, \alpha}_*}$ ($0 < \alpha \le 1$) denote the standard $C^k$ and $C^{k, \alpha}$ norms respectively.
	For example, if $u : \C \cap \Omega \to \R^N$ then
		$$\| u \|_{C^{k, \alpha}_*( \C \cap \Omega) } = \sum_{j = 0}^k \sup_{x \in \C \cap \Omega} | \nabla^j_\C u |(x) + \sup_{x \ne y \in \C \cap \Omega} \frac{ | \nabla^k_\C u (x) - \nabla^k_\C u(y) |}{  |x - y|^\alpha}.$$
		
	For $0 < r < R < \infty$ and $u : \C \cap A_{r, R} \to \R^N$, define scale-invariant $C^k$ and $C^{k, \alpha}$ norms by
	\begin{align*}
		\| u \|_{C^k ( \C \cap A_{r, R} ) } &\doteqdot \sum_{j = 0}^k \sup_{x \in \C \cap A_{r, R} } |x|^{j - 1} \sup_{y \in B_{\frac{|x|}2}(x) \cap \C \cap A_{r, R}   }| \nabla^j_\C u |(y) \\
		&= \sum_{j = 0}^k \sup_{x \in \C \cap A_{r, R} } |x|^{j - 1} \| \nabla^j_\C u \|_{C^0 ( B_{\frac{|x|}2}(x) \cap \C \cap A_{r, R})} 
	\end{align*}
	\begin{align*}
		& \| u \|_{C^{k, \alpha}( \C \cap A_{r, R} ) } \\
		\doteqdot{}& 
			\| u \|_{C^k( \C \cap A_{r, R} ) } 
			+ \sup_{x \in \C \cap A_{r, R} } |x|^{k -1 + \alpha}
			\sup_{y \ne z \in B_{\frac{|x|}2} (x) \cap \C \cap A_{r, R} }
			\frac{ | \nabla^k_\C u (y) - \nabla^k_\C u(z) | }{ | y - z |^\alpha} \\
		={}& \| u \|_{C^k( \C \cap A_{r, R} ) } 
			+ \sup_{x \in \C \cap A_{r, R} } |x|^{k -1 + \alpha} [ \nabla^k u ]_{C^{ \alpha}_* ( B_{\frac{|x|}2} (x) \cap \C \cap A_{r, R}  ) }
	\end{align*}
	where $[ \, \cdot \, ]_{C^{ \alpha}_*}$ denotes the standard $C^{ \alpha}$ semi-norm.
	
	For functions $u : \C \cap A_{r, R} \times (a, b) \to \R^N$ that also depend on time $t \in (a,b)$, 
	denote backward parabolic neighborhoods as
		$$P_r (x, t) \doteqdot B_r(x) \times (t - r^2, t)$$
	and define scale-invariant $C^k$ and $C^{k, \alpha}$ norms by	
	\begin{align*}
		& \| u \|_{C^{k} ( \C \cap A_{r, R} \times (a,b) ) } \\
		\doteqdot{}& \sum_{2i + j \le k} \sup_{(x,t) \in \C \cap A_{r,R} \times (a,b)} |x|^{2i + j - 1} \| \partial_t^i \nabla_\C^j u \|_{C^0_* ( P_{\frac{|x|}2 } (x,t) \cap (\C \cap A_{r,R} \times (a,b) ) ) } \\
		&+ \sum_{0 < \frac{k - 2i - j} 2 < 1} \sup_{(x,t) \in \C \cap A_{r,R} \times (a,b)} |x|^{k-1+\alpha} 
			[\partial_t^i \nabla_\C^j u ]_{t, C^{\frac{k-2i-j}2}_* ( P_{\frac{|x|}2 } (x,t) \cap (\C \cap A_{r,R} \times (a,b) ) ) },
	\end{align*}
	
	\begin{align*}
		& \| u \|_{C^{k, \alpha} ( \C \cap A_{r, R} \times (a,b) ) } \\
		\doteqdot{}& \sum_{2i + j \le k} \sup_{(x,t) \in \C \cap A_{r,R} \times (a,b)} |x|^{2i + j - 1} \| \partial_t^i \nabla_\C^j u \|_{C^0_* ( P_{\frac{|x|}2 } (x,t) \cap (\C \cap A_{r,R} \times (a,b) )) } \\
		&+ \sum_{2i + j = k} \sup_{(x,t) \in \C \cap A_{r,R} \times (a,b)} |x|^{k - 1 + \alpha} [ \partial_t^i \nabla_\C^j u ]_{x,C^\alpha_{*}( P_{\frac{|x|}2 } (x,t) \cap (\C \cap A_{r,R} \times (a,b) ) )} \\
		&+ \sum_{0 < \frac{k + \alpha - 2i - j} 2 < 1} \sup_{(x,t) \in \C \cap A_{r,R} \times (a,b)} |x|^{k-1+\alpha} 
			[ \partial_t^i \nabla_\C^j u ]_{t,C^{\frac{k + \alpha - 2i - j} 2}_{*}( P_{\frac{|x|}2 } (x,t) \cap (\C \cap A_{r,R} \times (a,b) ) )} .
	\end{align*}
	Here, $[ \, \cdot \, ]_{x, C^\alpha_*} $ and $[ \, \cdot \, ]_{t, C^\alpha_*}$ denote the standard $C^\alpha$ semi-norms in the variables $x$ and $t$ respectively.
	Namely,
	\begin{align*}
		[u]_{x, C^\alpha_*(\C \times (a,b) \cap \Omega)} 
		&\doteqdot \sup_{ (x,t) \ne (x', t) \in \C \times (a,b) \cap \Omega } 
		\frac{| u(x,t) - u(x',t)|}{|x - x'|^\alpha}, \text{ and}
		\\
		[u]_{t, C^\alpha_*(\C \times (a,b) \cap \Omega)} 
		&\doteqdot \sup_{ (x,t) \ne (x, t') \in \C \times (a,b) \cap \Omega } 
		\frac{| u(x,t) - u(x,t')|}{|t - t'|^\alpha}.
	\end{align*}
\end{definition}

Since we use the $C^{1, \alpha}$ norm most often, we note explicitly that 
	\begin{align*}
		& \| u \|_{C^{1, \alpha}( \C \cap A_{r,R} \times (a,b)) } \\
		={}& \sup_{x,t} |x|^{-1} \sup_{(y,s) \in P_{\frac{|x|}2} (x,t)} |u(y,s) | 
		+ \sup_{x,t}  \sup_{(y,s) \in P_{\frac{|x|}2} (x,t)} |\nabla_\C u(y,s) |	 \\
		&+ \sup_{x,t} |x|^{\alpha} \sup_{(y,s)\ne (y',s) \in P_{\frac{|x|}2} (x,t)} \frac{|\nabla_\C u(y,s) - \nabla_\C u(y',s)|}{|y - y'|^\alpha} \\
		&+ \sup_{x,t} |x|^{\alpha} \sup_{(y,s)\ne (y,s') \in P_{\frac{|x|}2} (x,t)} \frac{| u(y,s) -  u(y,s')|}{|s - s'|^{ \frac{1+\alpha} 2} }
	\end{align*}
	where also the suprema above are restricted to points in $\C \cap A_{r,R} \times (a,b)$.

\begin{remark}
	While the $C^{k, \alpha}$ norms defined above are somewhat non-standard,
	they have been chosen so that they satisfy the following properties, the proofs which have been left as exercises to the reader.
	\begin{enumerate}
		\item (Parabolic Scaling Invariance) If $u : \C \cap A_{r,R} \times (a,b) \to \R^N$, $\lambda > 0$, and $\tilde u : \C \cap A_{\lambda r, \lambda R} \times ( \lambda^2 a, \lambda^2 b) \to \R^N$ is given by $\tilde u (x,t) = \lambda u ( x/ \lambda, t / \lambda^2 )$ then
			$$\| \tilde u \|_{C^{k, \alpha} ( \C \cap A_{\lambda r, \lambda R} \times (\lambda^2 a, \lambda^2 b) ) } = \| u \|_{C^{k , \alpha} ( \C \cap A_{r, R} \times ( a, b) )}.$$

		\item (Time Translation Invariance) If $u : \C \cap A_{r,R} \times (a,b) \to \R^N$, $t_0 \in \R $, and $\tilde u : \C \cap A_{ r,  R} \times (  a + t_0, b + t_0) \to \R^N$ is given by $\tilde u (x,t) =  u ( x, t - t_0 )$ then
			$$\| \tilde u \|_{C^{k, \alpha} ( \C \cap A_{ r,  R} \times ( a+t_0,  b+t_0) ) } = \| u \|_{C^{k , \alpha} ( \C \cap A_{r, R} \times ( a, b) )}.$$

		\item (Equivalent to Standard H{\"o}lder Norms) There exists $C = C(r,R, k, \alpha)$ such that  
			$$ C^{-1} \| u \|_{C^{k, \alpha}_* ( \C \cap A_{r, R} \times ( a, b) )} 
			\le \| u \|_{C^{k, \alpha} ( \C \cap A_{r, R} \times ( a, b) )} 
			\le C \| u \|_{C^{k, \alpha}_* ( \C \cap A_{r, R} \times ( a, b) )} .$$			
	\end{enumerate}
	Similar properties hold for the space-time $C^k$ norms and the spatial $C^k$ and $C^{k, \alpha}$ norms.
\end{remark}

\begin{remark}
	The choice of using radius $\frac{|x|}2$ balls in the above definitions was somewhat arbitrary.
	Indeed, it can be shown through a covering argument that, for any $L > 1$, replacing ``$\frac{|x|}2$" with ``$\frac{|x|}L$" in the above definitions gives an equivalent norm, that is, the norms differ by a factor of $C = C(k, \alpha , L)$.
\end{remark}

\begin{lem}[Graphicality propagates out and back] \label{lem graphicality propogates}
	Let $( \mu_t)_{t \in (a,b)}$ be an integral $n$-Brakke flow in $U \subset \R^N$ with locally uniformly bounded areas and generalized mean curvature $H \in L^\infty L^p_{loc}(U \times (a,b))$ for some $p \in (n, \infty]$.
	Let $(V_t)_{t \in (a, b]}$ be the  associated integral $n$-Brakke flow from Lemma \ref{lem nice representative}.
	
	Let $(x_0, t_0) \in U \times ( a, b]$ and let $\C^n \subset \R^N$ be a minimal regular cone.
	For any $\epsilon > 0$,
	there exists $r_0, \delta > 0$ such that the following holds for all $0 < \rho < r_0$:
	
	if 
	$$(V_{t_0} -x_0) \cap A_{\rho/2, \rho} = G_{\rho/2, \rho}(u)$$ 
	for some $u : \C \cap A_{\rho/2, \rho} \to T^\perp C$ with 
	$$\| u \|_{C^{1, \alpha} ( \C \cap A_{\rho/2, \rho} ) } \le \delta,$$
	then 
		$$(V_t - x_0) \cap A_{\rho/4, 2\rho} = G_{\rho/4, 2\rho} ( \tilde u ( \cdot , t) )
		\qquad ( \forall t \in [t_0 - 4\rho^2, t_0] )$$
	for some extension $\tilde u : (\C \cap A_{\rho/4, 2\rho}) \times [t_0 - 4 \rho^2, t_0] \to T^\perp \C$ of $u$ with 
	$$\| \tilde u \|_{C^{1, \alpha} ( \C \cap A_{\rho/4, 2\rho} \times [t_0 - 4 \rho^2, t_0 ] )} \le \epsilon .$$
\end{lem}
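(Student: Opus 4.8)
The plan is to argue by contradiction and compactness, leveraging White's local regularity theorem for integral Brakke flows. Suppose the conclusion fails for some fixed $\epsilon > 0$. Then there exist sequences $\rho_j \searrow 0$ (with $\rho_j < 1/j$, say), basepoints that we may assume are fixed at $(x_0, t_0)$ after translation, and functions $u_j : \C \cap A_{\rho_j/2, \rho_j} \to T^\perp \C$ with $(V_{t_0} - x_0) \cap A_{\rho_j/2, \rho_j} = G_{\rho_j/2, \rho_j}(u_j)$ and $\|u_j\|_{C^{1,\alpha}(\C \cap A_{\rho_j/2, \rho_j})} \le 1/j$, yet for which the flow $(V_t - x_0)$ either fails to be a $C^{1,\alpha}$-graph of some extension of $u_j$ over $\C \cap A_{\rho_j/4, 2\rho_j}$ for all $t \in [t_0 - 4\rho_j^2, t_0]$, or such a graphical extension exists but has scale-invariant $C^{1,\alpha}$-norm exceeding $\epsilon$. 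The first step is to rescale: set $\lambda_j = \rho_j^{-1}$ and consider the parabolically rescaled flows $\cl{M}_j = \cl{D}_{x_0, t_0; \lambda_j}(\mu)$, together with the rescaled time-slice varifolds $\lambda_j (V_{t_0} - x_0)$. On the annulus $A_{1/2, 1}$, the rescaled time-slice is the graph $G_{1/2,1}(\hat u_j)$ where $\hat u_j(x) = \lambda_j u_j(x/\lambda_j)$, and by the scaling invariance of the $C^{1,\alpha}$ norm (stated in the Remark after the norm definitions, applied in the time-independent case) we have $\|\hat u_j\|_{C^{1,\alpha}(\C \cap A_{1/2,1})} = \|u_j\|_{C^{1,\alpha}(\C \cap A_{\rho_j/2, \rho_j})} \le 1/j \to 0$. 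Meanwhile the rescaled mean curvature of the time-slice decays: $\|H_{\lambda_j(V_{t_0} - x_0)}\|_{L^\infty(A_{1/4, 4})} \le \lambda_j^{-1}\|H\|_{L^\infty L^\infty} \to 0$ by the scaling identity \eqref{H_i decay est}, and likewise the rescaled flows $\cl{M}_j$ have mean curvature tending to $0$ in $L^\infty L^\infty$ on compact subsets of the relevant space-time region.

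Next I would extract a limit. By the area-ratio bounds (Lemma \ref{lem area ratio bound}) and Huisken's monotonicity formula, a subsequence of the $\cl{M}_j$ converges to a limiting Brakke flow $\cl{M}_\infty$ on the space-time region around $A_{1/4,4} \times (-16, 0]$; since the mean curvatures tend to zero, the limit is a static flow of a stationary cone, and since on the annulus $A_{1/2,1}$ the time-slices converge to $\C \cap A_{1/2,1}$ with multiplicity one (the $\hat u_j \to 0$ in $C^{1,\alpha}$), the limiting stationary cone must be $\C$ itself (with multiplicity one) on that annulus — here one uses that $\C$ is a minimal \emph{regular} cone, so its link is smooth and embedded, and constancy of the Gaussian density together with White's local regularity theorem \cite{White05} (in the integral Brakke flow version \cite{ST22}) forces the convergence to be smooth and graphical near $A_{1/4,4}$, hence the limit flow is the static multiplicity-one flow of $\C$ on all of $A_{1/4, 4} \times (-16, 0]$. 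White's local regularity theorem is the crucial input: since the limit flow equals the static flow of the regular cone $\C$ on the closed annulus $\ol{A_{1/4,4}}$, which has bounded curvature there, smooth convergence $\cl{M}_j \to \cl{M}_\infty$ holds on $A_{1/3, 3} \times (-9, 0]$ in the $C^\infty_{loc}$ topology. In particular, for $j$ large, $(\cl{M}_j)_t \cap A_{1/4, 2}$ is a smooth graph $G_{1/4, 2}(\hat u_j(\cdot, t))$ over $\C$ for all $t \in [-4, 0]$, with $\|\hat u_j\|_{C^{1,\alpha}(\C \cap A_{1/4,2} \times (-4,0))} \to 0$; moreover $\hat u_j(\cdot, 0)$ restricted to $A_{1/2,1}$ agrees with the original $\hat u_j$ by construction, so rescaling back via the scaling invariance of the space-time $C^{1,\alpha}$ norm produces the desired extension $\tilde u_j$ over $\C \cap A_{\rho_j/4, 2\rho_j} \times [t_0 - 4\rho_j^2, t_0]$ with $\|\tilde u_j\|_{C^{1,\alpha}} < \epsilon$ once $j$ is large. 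This contradicts the choice of the sequence, proving the lemma.

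The main obstacle is the application of White's local regularity theorem to identify the limit flow as smooth and to upgrade weak convergence to $C^\infty_{loc}$ convergence on the annular region, and in particular to rule out the limit being a \emph{strictly smaller} stationary cone or $\C$ with higher multiplicity. Three points need care: first, that the Gaussian density ratios of the rescaled flows along the annulus converge to the density of the multiplicity-one plane-like model, which follows from the $C^{1,\alpha}$-graphicality on $A_{1/2,1}$ and the monotonicity formula; second, that the limiting varifold on the annulus is genuinely $\C$ with multiplicity one — this is where regularity of $\C$ (smooth embedded link) is used, since it gives an a priori curvature bound for the model on the closed annulus, so that White's theorem applies with no small-curvature hypothesis violated in the limit; and third, that the extension $\tilde u_j$ obtained from the smooth graphical convergence genuinely restricts to $u_j$ on $A_{\rho_j/2, \rho_j}$ — this is immediate since $\tilde u_j$ is just the graph function of the (unchanged) flow on the larger annulus, and the hypothesis $(V_{t_0} - x_0)\cap A_{\rho_j/2,\rho_j} = G_{\rho_j/2,\rho_j}(u_j)$ forces agreement there. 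Interior parabolic Schauder estimates (or equivalently the smooth convergence) then control the higher derivatives, and the scaling- and translation-invariance of the norms (from the Remark) convert the estimates on the unit-scale annulus back to scale $\rho_j$.
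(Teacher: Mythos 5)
Your overall architecture (argue by contradiction, rescale parabolically by $\rho_j^{-1}$, extract a limit by compactness, apply White's local regularity theorem to get smooth convergence on a larger annulus, then rescale back using the scale-invariance of the norms) is the same as the paper's. The genuine gap is at the step where you identify the limit of the rescaled \emph{flows} as the multiplicity-one static flow of $\C$ near the annulus. The hypothesis only gives graphicality of the single slice $V_{t_0}$, i.e.\ of the rescaled flows at time $t=0$; the Gaussian densities $\Theta_{\mu^j}(x,t)$ at the space-time points where White's theorem must be applied are computed from the flow at times \emph{strictly before} $t\le 0$, about which the $t=0$ graphicality says nothing. Moreover a Brakke flow can shed mass into its final time: even granting $\mu^\infty_0 = \cl{H}^n \mres \C$ on $A_{1/2,1}$, Brakke's inequality only yields $\mu_\C \le \mu_{\tilde\C}$ there, so the limit for $t<0$ could a priori be $\C$ with multiplicity two, or $\C$ together with extra sheets that vanish at $t=0$; in that case no backward-in-time graphicality holds and White's small-density hypothesis fails. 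Your proposed justification --- that the Gaussian density ratios converge to the multiplicity-one value ``by the $C^{1,\alpha}$-graphicality on $A_{1/2,1}$ and the monotonicity formula,'' after which White's theorem identifies the limit --- is circular: White's theorem needs exactly the density bound you are trying to produce, and Huisken's monotonicity bounds the density at a point from above by ratios at \emph{larger} scales, which are not controlled by the time-$0$ slice alone.

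The missing ingredient is Theorem \ref{thm blow-ups} (equivalently Corollary \ref{cor uniqueness equivalence}): for flows with $H \in L^\infty L^p_{loc}$, $p>n$, the subsequential limits of the rescaled time-slices $(\eta_{x_0,\lambda_j})_\sharp V_{t_0}$ and of the parabolically rescaled flows coincide. This is precisely where the mean curvature bound enters in both directions --- Brakke's inequality with the decaying $H$-term gives $\mu_\C \le \mu_{\tilde\C}$, while the time-slice monotonicity formula \eqref{simple mono forla eqn} applied at a fixed slightly earlier time gives the reverse density-ratio inequality --- and it is exactly what the paper's proof invokes (applied to the flow $(\mu_{V_t})$ from Lemma \ref{lem nice representative}) before appealing to upper semicontinuity of Gaussian density and White's theorem \cite{White05, ST22}. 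Once Theorem \ref{thm blow-ups} is inserted at that point, the remainder of your argument --- smooth convergence of $V^j_t$ to $\C$ on $A_{1/4,2}\times[-4,0]$, the extension agreeing with $u_j$ on the inner annulus because it is the graph function of the same flow, and the parabolic scaling/translation invariance of the $C^{1,\alpha}$ norms to undo the dilation --- goes through as in the paper.
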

\begin{proof}
	By translation, assume without loss of generality that $(x_0, t_0) = (\mathbf 0 , 0)$.
	Suppose the lemma were false for the sake of contradiction.
	Then we can take a sequence $r_i = \delta_i \searrow 0$ and obtain $\rho_i \in ( 0 , r_i )$ where the implication fails.
	That is,
	$V_0 \cap A_{\rho_i/2, \rho_i} = G_{\rho_i/2, \rho_i} (u_i)$ is a $C^{1, \alpha}$-graph over $\C \cap A_{\rho_i/2, \rho_i}$ with  $\| u_i \|_{C^{1, \alpha}(\C \cap A_{\rho_i/2, \rho_i} )} \le \delta_i$,
	but 
	$(V_t)$ is not a $C^{1, \alpha}$-graph over $\C \cap A_{\rho_i/4, 2 \rho_i} \times [t_0 - 4 \rho_i^2, t_0]$ with $C^{1, \alpha}$-norm bounded by $\epsilon$ in this region.
	
	Parabolically dilate $V_t$ by $\lambda_i \doteqdot \frac{1}{\rho_i} \to +\infty$
	to obtain $V^i_t \doteqdot (\eta_{\lambda_i})_\sharp V_{t \lambda_i^{-2}}$ and set $\mu^i_t \doteqdot \mu_{V^i_t}$.
	After passing to a subsequence, Theorem \ref{thm blow-ups} applied to the Brakke flow $(\mu_{V_t})$ implies there exists a stationary, dilation invariant varifold $\C'$ such that 
		$$V^i_0 \rightharpoonup \C'
		\qquad \text{ and } \qquad
		\mu^i_t  \rightharpoonup \mu_{\C'} 
		\quad ( \forall t < 0)$$
	as $i \to \infty$.
	Since $\C'$ is dilation invariant and 
	\begin{gather*}
	V_0^i \cap A_{1/2, 1} = G_{1/2, 1} \left( \lambda_i u_i ( \cdot/ \lambda_i ) \right) \\
	\text{with } \| \lambda_i u_i ( \cdot/ \lambda_i) \|_{C^{1, \alpha} ( \C \cap A_{1/2, 1} )}
	= \| u_i \|_{C^{1, \alpha} ( \C \cap A_{\rho_i/2, \rho_i} )} \le \delta_i \to 0,
	\end{gather*}
	it follows that in fact $\C' = \C$.
		
	Now the stationary flow $( \mu^\infty_t = \mu_C )_{t \le 0}$ given by $\C$ has Gaussian density 
		$$\Theta_{\mu^\infty} (x, t) = 1	\qquad  \forall (x, t) \in \ol{A_{1/6, 4}} \times [ -6, 0]$$ 
	since $\C$ has smooth link.
	The upper semi-continuity of Gaussian density then implies that for any $\sigma > 0$
		$$ \Theta_{\mu^i}(x,t) < 1 + \sigma		\qquad \forall (x, t) \in \ol{A_{1/6, 4}} \times [ -6, 0]$$
	for all $i \gg 1$ sufficiently large.
	
	By White's local regularity theorem \cite{White05} (and its generalization to integral Brakke flows \cite{ST22}),
	it follows that, for $i \gg 1$, 
	$V^i_t$ is a smooth mean curvature flow in $A_{1/6,4} \times [-6,0]$ with second fundamental form bounded by a dimensional constant $C = C(N) < \infty$ in $A_{1/5, 3} \times [-5, 0]$.
	Interior regularity for mean curvature flow then implies that the convergence $V^i_t \xrightarrow[i \to \infty]{} \C$ is smooth on $A_{1/4, 2} \times [ -4, 0]$.
	It follows that, for all $i \gg 1$,
	$$V^i_t \cap A_{1/4, 2} = G_{1/4,2} ( \tilde w_i ( \cdot, t) )		\qquad ( \forall t \in [-4,0])$$
	for some $\tilde w_i : \C \cap A_{1/4, 2} \times [-4,0] \to T^\perp C$ extending $\lambda_i u_i ( \cdot / \lambda_i )$ with
	$$\| \tilde w_i \|_{C^{3, \alpha} ( C \cap A_{1/4, 2} \times [-4, 0] ) } \xrightarrow[i \to \infty]{} 0.$$
	Since 
		$$\| \tilde w_i ( \cdot, 0) \|_{C^{1, \alpha} ( \C \cap A_{1/2, 1} ) } 
		= \| \lambda_i \tilde u_i ( \cdot / \lambda_i ) \|_{C^{1, \alpha} ( \C \cap A_{1/2, 1} )} \le \delta_i \to 0,$$
	and derivatives of $\tilde w_i$ converge to 0 on $A_{1/4, 2} \times [-4, 0]$, we have that in fact
		$$\| \tilde w_i \|_{C^{1, \alpha} ( \C \cap A_{1/4, 2} \times [ -4, 0] )} \le \epsilon$$
	for all $i \gg 1$.
	Undoing dilations gives that for $i \gg 1$
		$$V_t \cap A_{\rho_i/4, 2\rho_i} = G_{\rho_i/4, 2\rho_i} ( \tilde u_i(\cdot, t) )		\qquad (\forall  t \in [-4\rho_i^2, 0])$$
	for $\tilde u_i(x,t) = \frac{1}{\lambda_i} \tilde w_i ( x \lambda_i, t \lambda_i^2 ) : \C \cap A_{\rho_i/4, 2\rho_i} \to T^\perp \C$
	extending $u_i$ and
		$$\| \tilde u_i \|_{C^{1, \alpha} ( \C \cap A_{\rho_i/4, 2\rho_i}  \times [- 4\rho_i^2, 0] ) } \le \epsilon,$$
	which contradicts the choice of the $r_i, \delta_i, \rho_i$.
\end{proof}

\begin{lem} \label{lem interior H ests}
	Let $(\mu_t)_{t \in (a,b)}$ be an integral $n$-Brakke flow in $U \subset \R^N$ with locally uniformly bounded areas and generalized mean curvature $H \in L^\infty L^\infty_{loc} ( U \times (a, b))$.
	Let $(V_t)_{t \in (a, b]}$ be the associated family of varifolds as in Lemma \ref{lem nice representative}.
	Let $(x_0, t_0 ) \in U \times (a, b]$ and 
	let $\mathbf C$ be a regular cone.
	
	There exists $\beta, C, r_0 > 0$ such that for all $0 < \rho < r_0$ the following holds:
	
	if 
		$$(V_{t_0} - x_0) \cap A_{\rho/2,\rho} = G_{\rho/2, \rho} ( u ) $$
	for some $u : \mathbf C \cap A_{\rho/2, \rho} \to T^\perp \mathbf C$ with 
		$$\| u \|_{C^{1, \alpha} (\mathbf C \cap A_{\rho/2, \rho} )} \le \beta,$$
	then for any $0 < \sigma < \rho$
	\begin{multline*}
		| \nabla_{V_{t_0}} H_{V_{t_0}} | 
		\le \frac{ C \| H \|_{L^\infty L^\infty(A_{\rho/2, \rho} (x_0) \times (t_0 - \rho^2 , t_0))} }{ \sigma} \\
		\le \frac{ C \| H \|_{L^\infty L^\infty(A_{r_0/2, r_0} (x_0) \times (t_0 - r_0^2 , t_0))} }{ \sigma} <\infty
		\quad \text{and}
	\end{multline*}
	\begin{multline*}
		| \nabla^2_{V_{t_0}} H_{V_{t_0}} | 
		\le \frac{ C \| H \|_{L^\infty L^\infty(A_{\rho/2, \rho}(x_0) \times (t_0 - \rho^2,t_0)) } }{ \sigma^2} \\
		\le \frac{ C \| H \|_{L^\infty L^\infty(A_{r_0/2, r_0}(x_0) \times (t_0 - r_0^2,t_0)) } }{ \sigma^2} < \infty
	\end{multline*}
	on $ V_{t_0}  \cap A_{ \rho/2 + \sigma, \rho - \sigma }(x_0)$.
\end{lem}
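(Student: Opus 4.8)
The plan is to combine Lemma~\ref{lem graphicality propogates} with White's local regularity theorem and interior parabolic Schauder estimates, in three steps: (i) propagate the small $C^{1,\alpha}$-graphicality over $\mathbf C$ from the annulus $A_{\rho/2,\rho}$ at time $t_0$ to a full backward space-time neighbourhood; (ii) use White's theorem and interior estimates for mean curvature flow to upgrade this to smoothness with scale-invariant bounds on $A$ and all of its derivatives; (iii) feed these bounds as coefficients into the Jacobi-type evolution equation satisfied by $H$ and apply interior Schauder estimates on a small parabolic cylinder. By the parabolic translation- and scaling-invariance of the norms and of the asserted estimates we may assume $(x_0,t_0)=(\mathbf 0,0)$; it is convenient (for step (ii)) to pass freely to the rescaled picture $\rho=1$.

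For step (i), apply Lemma~\ref{lem graphicality propogates} with a small parameter $\epsilon_0>0$ to be fixed below, obtaining $r_0,\delta>0$, and set $\beta\doteqdot\delta$. Then for every $0<\rho<r_0$ the hypothesis $V_{0}\cap A_{\rho/2,\rho}=G_{\rho/2,\rho}(u)$ with $\|u\|_{C^{1,\alpha}}\le\beta$ yields $V_t\cap A_{\rho/4,2\rho}=G_{\rho/4,2\rho}(\tilde u(\cdot,t))$ for all $t\in[-4\rho^2,0]$ with $\|\tilde u\|_{C^{1,\alpha}}\le\epsilon_0$ (here $\mathbf C$ is in particular minimal, being a regular cone to which the flow is $C^{1,\alpha}$-close at small scales, so Lemma~\ref{lem graphicality propogates} applies). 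For step (ii), rescale to $\rho=1$: since $\mathbf C$ is a regular cone of multiplicity one, a $C^{1,\alpha}$-graph over $\mathbf C$ with norm $\le\epsilon_0$ has local area ratios within $o_{\epsilon_0}(1)$ of $\omega_n$ at every scale $\lesssim 1$ centred in $A_{1/4,2}$, uniformly in $t\in[-4,0]$, and Huisken's monotonicity formula then forces the Gaussian density ratios of $(V_t)$ to be $<1+\epsilon$ throughout a slightly smaller space-time region whenever $\epsilon_0=\epsilon_0(\epsilon)$ is small. Choosing $\epsilon$ below the threshold of White's local regularity theorem \cite{White05} (in the form valid for integral $n$-Brakke flows, \cite{ST22}), we conclude that $(V_t)$ is a smooth, properly embedded mean curvature flow with $\sup|A|\le C(N)$ on a space-time region containing $\overline{A_{1/2,1}}\times[-1,0]$ with a definite parabolic collar; interior estimates for mean curvature flow (as in the proof of Lemma~\ref{lem graphicality propogates}) then upgrade this to $\sup|\nabla^k_{V_t}A_{V_t}|\le C(k,N)$ on a slightly smaller region, for every $k$. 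Undoing the rescaling: on a backward parabolic neighbourhood of $\overline{A_{\rho/2,\rho}}\times\{0\}$ of size comparable to $\rho$, the flow $(V_t)$ is smooth with
\[
\bigl|\nabla^k_{V_t}A_{V_t}\bigr|\le C(k,\mathbf C,N)\,|x|^{-1-k}\qquad(k=0,1,2,\dots).
\]

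For step (iii), recall that on the smooth part the generalized mean curvature $H_{V_t}$ is the classical mean curvature vector and that, along a smooth mean curvature flow in any codimension, it satisfies a linear parabolic equation homogeneous in $H$ --- schematically $\partial_t H=\Delta H+|A|^2H$ in codimension one, and a closely analogous $H$-homogeneous linear system in higher codimension --- whose coefficients, in the natural graph coordinates over $\mathbf C$, are polynomial in the metric, $A$ and $\nabla A$, hence by step (ii) are bounded together with all their derivatives, the coefficient of the order-$j$ term scaling like $|x|^{-2+j}$. Fix now $x\in V_{0}\cap A_{\rho/2+\sigma,\rho-\sigma}$ (the assertion is vacuous unless $\sigma<\rho/4$, in which case $|x|\asymp\rho$) and note $P_{\sigma/2}(x,0)\subset A_{\rho/2,\rho}\times(-\rho^2,0)$, since $x$ lies at distance $>\sigma$ from $\partial A_{\rho/2,\rho}$ and $\sigma<\rho$; moreover $P_{\sigma/2}(x,0)$ sits inside the space-time neighbourhood from step (ii), so the coefficients are controlled there. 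Rescaling $P_{\sigma/2}(x,0)$ to a unit backward parabolic cylinder, uniform ellipticity together with the scaling $|x|^{-2+j}\asymp\rho^{-2+j}$ of the coefficients (and $\sigma\le\rho$) makes the rescaled coefficients bounded by $C(\mathbf C,N)$, so interior parabolic Schauder estimates for the homogeneous equation bound $\|H\|_{C^{2,\alpha}}$ on a smaller cylinder by $\|H\|_{C^0(P_{\sigma/2}(x,0))}$; scaling back,
\[
|\nabla_{V_0}H_{V_0}|(x)\le\frac{C\,\|H\|_{C^0(P_{\sigma/2}(x,0))}}{\sigma},\qquad |\nabla^2_{V_0}H_{V_0}|(x)\le\frac{C\,\|H\|_{C^0(P_{\sigma/2}(x,0))}}{\sigma^2},
\]
with $C=C(\mathbf C,N)$. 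Finally $B_{\sigma/2}(x)\subset A_{\rho/2,\rho}$ and $(-\sigma^2/4,0)\subset(-\rho^2,0)$, so $\|H\|_{C^0(P_{\sigma/2}(x,0))}\le\|H\|_{L^\infty L^\infty(A_{\rho/2,\rho}\times(-\rho^2,0))}<\infty$, which (after translating back to $(x_0,t_0)$) is the assertion.

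I expect the main obstacle to be step (ii): checking that small $C^{1,\alpha}$-graphicality over the \emph{multiplicity-one regular} cone $\mathbf C$ really does produce Gaussian density ratios below the threshold of White's local regularity theorem --- this is exactly where the regularity and unit multiplicity of $\mathbf C$ are used, and it is why the hypothesis excludes, e.g., higher-multiplicity planes. A secondary point requiring care is the two-scale bookkeeping: the smooth structure and the coefficient bounds live on the wide graphical region of size $\sim\rho$, whereas the final Schauder estimate must be localized to a cylinder of radius $\sim\sigma$ sitting inside $A_{\rho/2,\rho}\times(-\rho^2,0)$ so that the bound depends only on $H$ over that annulus, which is what produces the stated $\sigma^{-1}$ and $\sigma^{-2}$ weights.
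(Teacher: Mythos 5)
Your proposal is correct and follows essentially the same route as the paper's proof: propagate the small $C^{1,\alpha}$-graphicality backward in time and outward via Lemma \ref{lem graphicality propogates}, upgrade to a smooth mean curvature flow with scale-invariant bounds on $A$ and all its derivatives on the rescaled annulus, and then treat the evolution equation for $H$ (which the paper takes in the form $\partial_t H = \Delta H + \nabla A * H + A*\nabla H + A*A*H$) as a linear parabolic system with controlled coefficients and apply interior parabolic estimates, with the two-scale bookkeeping producing the $\sigma^{-1}$ and $\sigma^{-2}$ weights. The only cosmetic deviations are that you reach smoothness through Gaussian density ratios and White's/ST22's local regularity theorem (exactly the mechanism the paper uses inside Lemma \ref{lem graphicality propogates}, whereas in this lemma it simply cites interior estimates for graphical Brakke flows), and that you run the Schauder step on pointwise cylinders $P_{\sigma/2}(x,0)$ rather than on the rescaled annulus $A_{1/2+\sigma/\rho,\,1-\sigma/\rho}$; just note that the "density ratios close to $\omega_n$ at every scale $\lesssim 1$" claim should be restricted to scales small compared with the distance to the vertex, which is harmless since your final bound $|\nabla^k A|\le C|x|^{-1-k}$ already accounts for this.
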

\begin{proof}
	Throughout, we assume $0 < r_0 \ll 1$ is small enough so that $\ol{B}_{2 r_0} (x_0 ) \Subset U$ and $(t_0 - 4 r_0^2, t_0 ) \subset (a,b)$.
	By translation, assume without loss of generality that $(x_0, t_0 )= ( \mathbf 0, 0)$.
	Assume $0 < \rho < r_0$ and 
		$$V_{0}  \cap A_{\rho/2,\rho} = G_{\rho/2, \rho} ( u ) $$
	for some $u : \mathbf C \cap A_{\rho/2, \rho} \to T^\perp \mathbf C$ with 
		$$\| u \|_{C^{1, \alpha} (\mathbf C \cap A_{\rho/2, \rho} )} \le \beta.$$
	
	By Lemma \ref{lem graphicality propogates}, for any $\epsilon > 0$, we can assume $\beta, r_0 \ll 1$ are sufficiently small (depending on $\epsilon$) so that 
		$$V_{t}  \cap A_{\rho/4,2\rho} = G_{\rho/4, 2\rho} ( \tilde u( \cdot, t) )		\qquad \forall t \in [-4\rho^2, 0] $$
	for some extension $\tilde u : \mathbf C \cap A_{\rho/4, 2\rho} \times[-4 \rho^2, 0] \to T^\perp \mathbf C$ of $u$ with 
		$$\| \tilde u \|_{C^{1, \alpha} (\mathbf C \cap A_{\rho/4, 2\rho} \times [-4\rho^2, 0] )} \le \epsilon.$$
	Consider the parabolically rescaled flow $W_t \doteqdot ( \eta_{1/\rho} )_\sharp V_{t \rho^2}$ and note
		$$W_t \cap A_{1/4, 2} = G_{1/4, 2} ( \tilde w ( \cdot, t ) )	\qquad \forall t \in [-4, 0]$$
	where
		$$\tilde w(x, t) \doteqdot \frac{1}{\rho} \tilde u ( x \rho, t \rho^2 ),		\qquad
		\| \tilde w \|_{C^{1, \alpha} ( A_{1/4, 2} \times [ -4, 0] )}
		= \| \tilde u \|_{C^{1, \alpha} ( A_{\rho/4, 2 \rho} \times [ -4 \rho^2, 0] )} \le \epsilon.$$
	If $\epsilon = \epsilon ( N, \mathbf C)$ is sufficiently small (depending only on $N$ and $\mathbf C$),
	interior estimates (see e.g. \cite{ST22} or \cite{Tonegawa14})
	imply that $W_t$ is a smooth mean curvature flow on $A_{1/3, 3/2 } \times [-3, 0]$
	with derivative bounds on the second fundamental form $A = A_{W_t}$ of the form
	\begin{equation} \label{A Ests}
		\sup_{(x,t) \in A_{1/3, 3/2} \times [ -3,0] } | \nabla^k_{W_t} A_{W_t} | \le C_k = C_k ( N, \mathbf C)
		\qquad ( \forall k \in \N).
	\end{equation}
	
	In this region $A_{1/3, 3/2} \times [-3, 0]$ where $W_t$ is a smooth mean curvature flow, the mean curvature $H = H_{W_t}$ satisfies an evolution equation of the form
	\begin{equation} \label{H Evol Eqn}
		\partial_t H = \Delta H + \nabla A * H + A * \nabla H + A * A * H
	\end{equation}
	(see \cite[Corollary 3.8]{Smoczyk12}). 
	The bounds \eqref{A Ests} imply \eqref{H Evol Eqn} is a linear parabolic PDE system for $H_{W_t}$ in the domain $A_{1/3,3/2} \times [ -3, 0]$ with uniform $C^k$-bounds on the coefficients that depend only on $N, \mathbf C,$ and $k$.
	Interior estimates for parabolic systems (see e.g. \cite{LSU88})
	therefore imply that for some $C = C(N, \mathbf C)$
	\begin{equation} \label{interior H ests rescaled}
		\sup_{A_{1/2 + \sigma, 1 - \sigma}  } | \nabla^2_{W_0} H_{W_0} |
		\le \frac{C}{\sigma^2} \sup_{A_{1/2, 1} \times [-1, 0] }  |H_{W_t} |
		\qquad ( \forall 0 < \sigma < 1 ) .
	\end{equation}
	In terms of $V_t$, \eqref{interior H ests rescaled} becomes
	\begin{align*}
		\sup_{A_{\rho/2 + \sigma, \rho - \sigma}  } | \nabla^2_{V_0} H_{V_0} |
		={}& \frac{1}{\rho^3} \sup_{A_{1/2 + \sigma/\rho, 1 - \sigma/\rho} } | \nabla^2_{W_0} H_{W_0} |	\\
		\le{}& \frac{ C  }{(\sigma/\rho)^2 \rho^3 } \sup_{A_{1/2, 1} \times [-1, 0] }  |H_{W_t} | 
			&& \eqref{interior H ests rescaled}\\
		={}& \frac{ C  }{\sigma^2 \rho } \sup_{A_{1/2, 1} \times [-1, 0] }  |H_{W_t} | \\
		={}& \frac{ C  }{\sigma^2  } \sup_{A_{\rho/2, \rho} \times [-\rho^2, 0] }  |H_{V_t} | \\
		\le{}& \frac{ C  }{\sigma^2  } \| H \|_{L^\infty L^\infty (A_{\rho/2, \rho} \times (-\rho^2, 0)) } 
			&& (\text{Lemma } \ref{lem nice representative})
	\end{align*} 
	for all $0 < \sigma < \rho$.
	Note that in the last line $ H $ denotes the mean curvature of the Brakke flow $(\mu_t)_{t \in (a,b)}$.
	An analogous argument applies to estimate $| \nabla_{V_0} H_{V_0} |$.
\end{proof}

We can now prove Theorem \ref{thm uniqueness regular cones} by adapting the argument \cite[\S 7]{Simon83} used for tangent cones of stationary varifolds.

\begin{proof}[Proof of Theorem \ref{thm uniqueness regular cones}.]
	Throughout, we use $(V_t)_{t \in (a,b]}$ to denote the associated family of varifolds given by Lemma \ref{lem nice representative}.
	By translation, assume without loss of generality that $(x_0, t_0) = (\mathbf 0, 0)$.
	By Corollary \ref{cor uniqueness equivalence}, it suffices to show $\mathbf C$ is the unique tangent cone of $V_0$ at $\mathbf 0$.

	By Theorem \ref{thm blow-ups}, there exists some sequence $\lambda_k \nearrow +\infty$ such that
		$$V_k \doteqdot ( \eta_{\lambda_k} )_\sharp V_0 \rightharpoonup \mathbf C.$$
	and $\theta_{V_k} ( \mathbf 0 ) = \frac{ \mu_{\mathbf C} (B_1 ) }{\omega_n}$ for all $k$.	
	Note that the mean curvature $H_{V_k}$ of $V_k$ is bounded by
		$$\| H_{V_k } \|_{L^\infty ( B_2 ) } = \frac{1}{\lambda_k } \| H_{V_0} \|_{L^\infty ( B_{2/ \lambda_k} ) }
		\le \frac{1}{\lambda_k} \| H_{V_t} \|_{L^\infty L^\infty ( B_\delta \times (-\delta, 0]) } \xrightarrow[k \to \infty]{} 0$$
	so long as $\ol{B_\delta} \times (-\delta, 0) \subset U \times (a,b)$ and $k$ is large enough to ensure $2/\lambda_k < \delta$.
	Since $\mathbf C$ is smooth away from $\mathbf 0$,
	Allard's regularity theorem \cite{Allard72} (see also \cite[Ch. 5]{Simon83}) 
	implies that $V_k \cap A_{1/2,1}$ is smooth for all $k \gg 1$ sufficiently large, and the convergence $V_k \cap A_{1/2,1} \xrightarrow[k \to \infty]{} \mathbf C$ is smooth.
	In particular, for $k \gg 1$,
		$$V_k \cap A_{1/2, 1} = G_{1/2,1} ( u_k )$$
	for some $u_k : A_{1/2, 1} \to T^\perp \mathbf C$ with
		$$\| u_k \|_{C^{1, \alpha} ( A_{1/2,1}  ) } \xrightarrow[k \to \infty]{} 0.$$
	Lemma \ref{lem interior H ests} implies $V_k$ satisfies property \cite[(7.23)]{Simon83} 
	and therefore \cite[\S 7, Theorem 5]{Simon83} applies.
	That is, for $k \gg 1$,
		$$V_k \cap B_1 \setminus \{ \mathbf 0 \} = G_{0, 1} ( \tilde u_k )$$
	for some extension $\tilde u_k \in C^2 ( \mathbf C \cap B_1 \setminus \{ \mathbf 0 \} )$ of $u_k$ that satisfies
		$$\lim_{\rho \searrow 0} \frac{ \tilde u_k ( \rho \omega) }{ \rho } = \zeta_k ( \omega ) \qquad ( \omega \in L( \mathbf C ) )$$
	where $\zeta_k \in C^2 ( L ( \mathbf C ) )$ and where the convergence is in the $C^2 ( L ( \mathbf C ) )$ norm.
	Since the $V_k$ are all dilations of $V_0$, it follows that $\tilde u_k(x) = \frac{ \lambda_k}{\lambda_l} \tilde u_l \left( \frac{ \lambda_l}{ \lambda_k} x \right)$ for all $k,l \gg 1$, and thus $\zeta_k \equiv 0$ for all $k \gg 1$.
	Undoing the dilations reveals that, for a fixed $k$ suitably large,
	\begin{gather} \label{eqn global graphicality of time-slice}
		V_0 \cap B_{1/ \lambda_k} \setminus \{ \mathbf 0 \} = G_{0, 1/\lambda_k} \left( \frac{1}{\lambda_k} \tilde u_k ( \lambda_k x ) \right) \quad \text{and}	\quad
		\frac{ \frac{ 1}{\lambda_k} \tilde u_k ( \lambda_k \rho \omega) }{ \rho} 
		\xrightarrow[\rho \searrow 0]{C^2( L (\mathbf C ) )} 0.
	\end{gather}
	The uniqueness of the tangent cone now follows.
\end{proof}

As a corollary, we note that Theorem \ref{thm uniqueness regular cones} implies the flow may be written as a graph over the cone in certain space-time regions near the singularity.

\begin{cor}
	Let $(\mu_t)_{t \in (a,b)}$ be an integral $n$-Brakke flow in $U \subset \R^N$ with locally uniformly bounded areas and generalized mean curvature $H \in L^\infty L^\infty_{loc}(U \times (a,b))$.
	Let $(V_t)_{t \in (a,b]}$ be the associated integral $n$-Brakke flow from Lemma \ref{lem nice representative}.
	Assume $(\mu_t)$ has a tangent flow $\mu_\C$ at $(x_0,t_0) \in U \times (a, b]$ given by a regular cone $\C^n$.

	For any $\epsilon, C > 0$, there exists $r > 0$ such that
		$$(V_t - x_0) \cap A_{ \frac {\sqrt{t_0 - t}}C , r } = G_{ \frac {\sqrt{t_0 - t}}C , r } (u( \cdot , t)) 
		\qquad \forall t \in [ t_0 - r^2 , t_0 ]$$
	for some $u : \Omega \doteqdot \left \{ (x, t ) \in \C \times [ t_0 - r^2, t_0 ] \, | \, x \in A_{ \frac {\sqrt{t_0 - t}}C , r } \right \} \to T^\perp \C$ with $\| u \|_{C^{1, \alpha} (\Omega)  } \le \epsilon$.
\end{cor}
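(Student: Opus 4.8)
The plan is to argue by contradiction, re-running the blow-up scheme from the proof of Lemma~\ref{lem graphicality propogates} but now rescaling about the \emph{worst point} of a potential failure rather than about a fixed spatial scale. The one new ingredient compared to Lemma~\ref{lem graphicality propogates} is Theorem~\ref{thm uniqueness regular cones}: because the tangent flow of $(\mu_t)$ at $(x_0,t_0)$ is already known to be the static flow $\mu_\C$, every parabolic rescaling $\cl{D}_{x_0, t_0; \lambda}\mu$ converges to $\mu_\C$ as $\lambda \to \infty$, so no contradiction argument is needed to identify the blow-up limit. (One should note that combining Lemma~\ref{lem graphicality propogates} with the time-slice asymptotics from the proof of Theorem~\ref{thm uniqueness regular cones} and a dyadic covering in $\rho$ already yields the statement for \emph{some universal} constant $C$; the point of the argument below is that the region $A_{\sqrt{t_0-t}/C, r}$ for large $C$ probes spatial scales $\ll \sqrt{t_0 - t}$, which the fixed-scale Lemma~\ref{lem graphicality propogates} cannot reach on its own.)

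After a translation assume $(x_0,t_0) = (\mathbf{0},0)$. Suppose the corollary fails for some $\epsilon, C > 0$: then there is a sequence $r_i \searrow 0$ and times $t_i \in [-r_i^2, 0]$ so that $V_{t_i} \cap A_{\sqrt{-t_i}/C, r_i}$ is not a $C^{1,\alpha}$-graph over $\C$ with norm $\le \epsilon$. First I would localize this failure: using the equivalence between the scale-invariant norm on an annulus and the local $C^{1,\alpha}$-norms on balls $B_{|x|/L}(x)$ centered at points $x \in \C$ near the annulus (with $L > 1$ an auxiliary radius parameter, available by the norm-equivalence remarks after the norm definitions), together with smoothness of $\C$ away from $\mathbf{0}$, one obtains points $x_i \in \C$ with $\rho_i := |x_i| \in [\sqrt{-t_i}/C,\, r_i]$ for which $V_{t_i}$ is not a $C^{1,\alpha}$-graph over $\C$ with norm $\le \epsilon / C_0$ in $B_{\rho_i/L}(x_i)$, where $C_0$ is the fixed constant in this norm equivalence. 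The crucial observation is that $\rho_i \le r_i \to 0$, while the lower bound $\rho_i \ge \sqrt{-t_i}/C$ forces $-t_i \le C^2 \rho_i^2$, so that rescaling at scale $\rho_i$ will place $t_i$ at bounded rescaled time.

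Next I would rescale parabolically by $\lambda_i := \rho_i^{-1} \to +\infty$ about $(\mathbf{0}, 0)$ and set $W^i := \cl{D}_{\mathbf{0}, 0; \lambda_i}\mu$. By Theorems~\ref{thm blow-ups} and~\ref{thm uniqueness regular cones}, $[W^i]_t \rightharpoonup \mu_\C$ for every $t < 0$ and $(\eta_{\lambda_i})_\sharp V_0 \rightharpoonup \C$. The rescaled points $y_i := \lambda_i x_i \in \C$ have $|y_i| = 1$, and the rescaled times $s_i := \lambda_i^2 t_i = -(-t_i)/\rho_i^2 \in [-C^2, 0]$; passing to a subsequence, $(y_i, s_i) \to (y_\infty, s_\infty)$ with $y_\infty \in L(\C)$ (so $y_\infty \ne \mathbf{0}$) and $s_\infty \in [-C^2, 0]$. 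Since $\C$ is a regular cone of multiplicity one, $\mu_\C$ near $(y_\infty, s_\infty)$ is a smooth, static, unit-density minimal-surface flow, so $\Theta_{\mu_\C} \equiv 1$ there and its Gaussian density ratios at small scales near $(y_\infty, s_\infty)$ are $< 1 + \epsilon_0$, where $\epsilon_0 = \epsilon_0(n,N)$ is the constant in White's local regularity theorem. Upper semicontinuity of Gaussian density along $W^i \rightharpoonup \mu_\C$ (with Huisken's monotonicity formula) then gives the same density-ratio bound for $W^i$ on a fixed backward parabolic neighborhood of $(y_\infty, s_\infty)$ once $i \gg 1$; White's local regularity theorem~\cite{White05} and its extension to integral Brakke flows~\cite{ST22} make $W^i$ a smooth mean curvature flow there with uniform curvature bounds, and interior estimates plus the measure convergence $W^i \rightharpoonup \mu_\C$ upgrade this to smooth convergence $W^i \to \C$ on that neighborhood. (In the borderline case $s_\infty = 0$ one uses $(\eta_{\lambda_i})_\sharp V_0 \rightharpoonup \C$ and Allard's regularity theorem~\cite{Allard72} for the final time-slice instead.) Hence for $i \gg 1$, $[W^i]_{s_i}$ is a $C^\infty$-graph over $\C$ in $B_{1/L}(y_i)$ whose graph-function norm tends to $0$, in particular is $< \epsilon/C_0$; undoing the rescaling, $V_{t_i}$ is a $C^{1,\alpha}$-graph over $\C$ with norm $< \epsilon / C_0$ in $B_{\rho_i/L}(x_i)$, contradicting the choice of $x_i$. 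This proves the corollary, the graph $u(\cdot, t)$ on $\Omega$ being assembled from the local graphs, with scale-invariant $C^{1,\alpha}$-norm $\le \epsilon$ controlled by the local norms via the same norm equivalence.

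The main obstacle, and the reason Lemma~\ref{lem graphicality propogates} does not directly suffice, is that $\Omega$ pinches down to $(x_0, t_0)$: the region $A_{\sqrt{t_0 - t}/C, r}$ touches the parabolic ``spine'' $\{x_0\} \times \{t_0\}$ in the limit, so no single fixed parabolic rescaling of $\mu$ can be combined with White's regularity there. The fix is to rescale at the scale $\rho_i = |x_i - x_0|$ of the worst point; the hypothesis $|x_i - x_0| \ge \sqrt{t_0 - t_i}/C$ is exactly what keeps the rescaled time bounded and the worst point at unit distance from the spine, so that a subsequential limit survives and White's theorem applies in a genuine neighborhood. The remaining steps — the localization of an annulus-scale failure to a ball-scale failure (and the associated choice of the auxiliary radius $|x|/L$), the reassembly of the local graphs on $\Omega$, and the handling of the borderline case $s_\infty = 0$ — I expect to be routine.
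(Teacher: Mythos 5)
Your proposal is essentially correct, but it takes a genuinely different route from the paper. The paper's proof is short: it quotes the conclusion \eqref{eqn global graphicality of time-slice} from the proof of Theorem \ref{thm uniqueness regular cones} to get that $(V_{t_0}-x_0)\cap B_r\setminus\{\mathbf 0\}$ is a $\delta$-small graph over $\C$, then invokes Lemma \ref{lem graphicality propogates} with $\delta=\delta(\epsilon,C)$ to propagate graphicality backward in time on each annulus $A_{\rho/2,\rho}\times[t_0-C^2\rho^2,t_0]$, and finally takes a union over $\rho\in(0,r)$. Your observation that the \emph{literal} statement of Lemma \ref{lem graphicality propogates} only reaches back $4\rho^2$ in time is fair, but the paper's phrase ``$\delta=\delta(\epsilon,C)$'' is exactly the signal that one either iterates the lemma backward in time at fixed spatial scale (finitely many times, roughly $C^2/4$, with a chain $\epsilon_K=\epsilon$, $\epsilon_{k-1}=\delta(\epsilon_k)$) or reruns its compactness proof verbatim with the time interval $[t_0-C^2\rho^2,t_0]$; so large $C$ is not actually out of reach of the paper's scheme. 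Your alternative replaces this by a single worst-point contradiction argument: localize a failure to a point $x_i$ at spatial scale $\rho_i=|x_i|\ge\sqrt{-t_i}/C$, rescale about $(x_0,t_0)$ by $\rho_i^{-1}$, use Theorem \ref{thm uniqueness regular cones} (via Corollary \ref{cor uniqueness equivalence}) so that the \emph{whole} rescaled sequence converges to $\mu_\C$, and apply upper semicontinuity of Gaussian density plus White's regularity near the unit-distance point $(y_\infty,s_\infty)$, $s_\infty\in[-C^2,0]$. This handles all $C$ at once without iterating the lemma, at the cost of redoing the blow-up machinery that Lemma \ref{lem graphicality propogates} already packages; both arguments ultimately rest on the same two inputs (the final-slice asymptotics from Theorem \ref{thm uniqueness regular cones} and White's $\epsilon$-regularity).

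Three points in your write-up need tightening, though none is fatal. First, in the borderline case $s_\infty=0$ the bad times $s_i$ may still be strictly negative, so Allard applied to the final time-slice alone says nothing about $[W^i]_{s_i}$; the clean fix is to apply Lemma \ref{lem graphicality propogates} at scale $\rho_i$ (its hypothesis at time $t_0$ holds by \eqref{eqn global graphicality of time-slice} or by Allard), or to note that White's theorem applies up to the final time since $\Theta$ of the static cone flow at $(y_\infty,0)$ equals $1$. Second, the norm in the corollary is the parabolic $C^{1,\alpha}$ norm including the time-H\"older seminorm, so the localization of a failure should be done on parabolic cylinders $P_{|x|/L}(x,t)$ rather than on time-slices; the smooth space-time convergence you obtain handles this, but the contradiction must be set up accordingly. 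Third, ``reassembly'' requires ruling out support of $V_{t_i}$ in the annulus far from $\C$ (otherwise the asserted equality $(V_t-x_0)\cap A=G(u(\cdot,t))$ fails even if local graphicality over $\C$ holds everywhere); this follows from $\Theta_\mu(y,t)=\theta_{V_t}(y)\ge 1$ at support points (Theorem \ref{thm blow-ups}) together with upper semicontinuity of Gaussian density against the static cone flow, whose density vanishes off $\C$ -- include this failure mode explicitly in your dichotomy.
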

\begin{proof}
	For any $\delta > 0$, the proof of Theorem \ref{thm uniqueness regular cones}, namely \eqref{eqn global graphicality of time-slice}, implies there exists $r > 0$ such that $(V_{t_0} - x_0) \cap B_{r}  \setminus \{ \mathbf 0 \}$ can be written as a graph over $\C$ with $C^{1, \alpha}$-norm bounded by $\delta$.
	Let $\epsilon, C > 0$ be given.
	If $\delta = \delta( \epsilon , C ) \ll 1$ is sufficiently small and $r$ is possibly made smaller, then it follows from Lemma \ref{lem graphicality propogates} that,
	for any $0 < \rho < r$, $V_{t} - x_0$ is a graph over $\C$ on the region $A_{\rho/2 , \rho} \times [ t_0 - C^2 \rho^2, t_0]$ with $C^{1,\alpha}$-norm bounded by $\epsilon > 0$.
	The statement now follows by taking a union over $\rho \in (0, r)$.
\end{proof}

%%%%%%%%%%%%%%%%%%%%%%%%%%%%%%%%%%%%%%%%%%%%%%%%%%%%%%%%%
\section{Pinching Hardt-Simon Minimal Surfaces} \label{sect Pinching Hardt-Simon Minimal Surfs}
%%%%%%%%%%%%%%%%%%%%%%%%%%%%%%%%%%%%%%%%%%%%%%%%%%%%%%%%%

Throughout this section, we restrict to the case where $(M_t^n)_{t \in [-T, 0)}$ is a smooth mean curvature flow of properly embedded hypersurfaces in an open subset $U \subset \R^{n+1}$.
$\cl{M} = \bigcup_{t \in [-T, 0)} M_t \times \{ t \} \subset \R^{n+1} \times \R$ denotes its space-time track.
Fix a regular cone $\C^n_0 \subset \R^{n+1}$ and let $\cl{C} = \{ A \cdot \C_0 : A \in O(n+1) \}$ denote all rotations of the cone.
We generally use $\C$ to denote a rotation of the cone $\C_0$, that is $\C \in \cl{C}$.

The goal of this section is to prove Theorem \ref{meta thm 3}, 
which says that if $\cl{M}$ has bounded $H$ and develops a singularity with tangent flow given by an area-minimizing quadratic cone, then there is a type II blow-up limit given by a Hardt-Simon minimal surface (see Subsection \ref{subsect finding Hardt-Simon surfs} for the relevant definitions). 

\subsection{Flows Near a Regular Cone}

We begin with general results for mean curvature flows locally close to a regular cone $\C \in \cl{C}$.
The approach here was inspired by \cite[Section 8]{LSS22}.

\begin{definition}
	We say $\cl{M}$ is \emph{$\epsilon$-close to $\C \in \cl{C}$}  if 
	$\cl{M}$ is a $C^{1, \alpha}$-graph on $\C \cap A_{\epsilon, \epsilon^{-1} } \times [ - \epsilon^{-2}, - \epsilon^2 ]$ with $C^{1, \alpha}$ norm at most $\epsilon$.
	In other words,
	$$M_{t}   \cap A_{\epsilon, \epsilon^{-1} } = G_{\epsilon, \epsilon^{-1} } ( u( \cdot, t) ) \qquad \forall t \in [-\epsilon^{-2} , - \epsilon^2 ]$$
	for some $u : \C \cap A_{\epsilon, \epsilon^{-1} } \times [ - \epsilon^{-2}, - \epsilon^2 ] \to T^\perp \C$ with
	$$\| u \|_{C^{1, \alpha}  (\C \cap A_{\epsilon, \epsilon^{-1} } \times [ - \epsilon^{-2}, - \epsilon^2 ]) } \le \epsilon.$$
	
	We say $\cl{M}$ is \emph{$\epsilon$-close to $\C \in \cl{C}$ at $X = (x, t)$} if $\cl{M}- X$ is $\epsilon$-close to $\C$.
\end{definition}

Throughout the remainder of this section, $\epsilon$ is always assumed to be less than some small constant $\epsilon_0 = \epsilon_0 ( n, \C_0)$ that depends only on the regular cone $\C_0$ and implicitly its dimension $n$.

\begin{definition}
	Suppose $\cl{M}$ is $\epsilon$-close to $\C \in \cl{C}$ at $X$ for some $\epsilon \le \epsilon_0$.
	Define
		$$1 \in [ \lambda_* ( X) , \lambda^*(X) ] \subset [ 0, \infty ]$$
	to be the largest interval such that,
	for all $\lambda \in [ \lambda_*(X), \lambda^*(X) ]$,
	$\cl{D}_{\lambda^{-1}} ( \cl{M} - X )$ is $\epsilon$-close to some $\C' = \C'_\lambda \in \cl{C}$ at $( \mathbf 0, 0)$.
\end{definition}

Occasionally, we may write $\lambda_*(X; \cl{M}, \epsilon)$ or $\lambda^*(X; \cl{M}, \epsilon)$ to emphasize the dependence on $\cl{M}$ and $\epsilon$.
Observe that $\lambda_*(X)$ and $\lambda^*(X)$ are continuous in the basepoint $X$.

We note the following consequence of pseudolocality for mean curvature flow in our setting.

\begin{lem} \label{lem curv bounds}
	Define constants
		$$0 < c_{\C_0} \doteqdot \frac{1}{2} \sup_{x \in L ( \C_0 ) }  | A_{\C_0}|(x)
		< C_{\C_0} \doteqdot 2 \sup_{x \in L ( \C_0 ) }  | A_{\C_0}|(x) < \infty. $$
	There exists $C = C(n, \C_0) \gg 1$ and $\epsilon_0 = \epsilon_0(n, \C_0) \ll 1$ such that  
	if $\cl{M}$ is $\epsilon$-close to $\C \in \cl{C}$ at $X= (x,t)$ for some $\epsilon \le \epsilon_0$,
	then
	\begin{equation} \label{eqn curv bounds}
		\frac{c_{\C_0}}{\rho} \le \sup_{|y-x| = \rho} |A_{M_t}|(y) \le \frac{C_{\C_0}}{\rho}
		\qquad \forall \, (\epsilon + C) \lambda_* (X) \le \rho \le  (\epsilon^{-1} - C )  \lambda^* (X) .
	\end{equation}
\end{lem}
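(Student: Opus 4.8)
The plan is to deduce the curvature bound \eqref{eqn curv bounds} from the definition of $\epsilon$-closeness together with interior estimates for mean curvature flow and pseudolocality. The key point is that on the annular region where $\cl{M}$ is a small $C^{1,\alpha}$-graph over a (rescaled) rotation $\C'$ of $\C_0$, the flow is a smooth mean curvature flow with uniformly controlled geometry, and there the second fundamental form is comparable to that of the exact cone, which scales like $1/\rho$.

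First I would reduce to the base scale via parabolic rescaling. By definition of $\lambda_*(X),\lambda^*(X)$, for each $\lambda\in[\lambda_*(X),\lambda^*(X)]$ the flow $\cl{D}_{\lambda^{-1}}(\cl{M}-X)$ is $\epsilon$-close to some $\C'_\lambda\in\cl{C}$ at $(\mathbf 0,0)$, i.e.\ it is a $C^{1,\alpha}$-graph over $\C'_\lambda\cap A_{\epsilon,\epsilon^{-1}}\times[-\epsilon^{-2},-\epsilon^2]$ with $C^{1,\alpha}$-norm at most $\epsilon$. For $\epsilon\le\epsilon_0(n,\C_0)$ small, White's local regularity theorem (or the interior estimates of \cite{ST22,Tonegawa14} cited in Lemma \ref{lem interior H ests}) upgrades this to smooth convergence: on the slightly smaller region $\C'_\lambda\cap A_{2\epsilon,(2\epsilon)^{-1}}\times\{-1\}$ the graph function has small $C^2$-norm, hence $|A|$ is within, say, a factor $\tfrac{3}{2}$ of $|A_{\C'_\lambda}|$ at each point. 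Since $\C'_\lambda$ is a rotation of $\C_0$, at a point at distance $r$ from the origin one has $|A_{\C'_\lambda}|(r\omega)=\tfrac{1}{r}|A_{\C_0}|(\omega)$ for $\omega\in L(\C_0)$, so $\sup_{|y|=r}|A|$ at the rescaled time $-1$ lies in $[\tfrac{1}{r}\cdot\tfrac{1}{2}\sup_L|A_{\C_0}|,\ \tfrac{1}{r}\cdot 2\sup_L|A_{\C_0}|]=[\tfrac{c_{\C_0}}{r},\tfrac{C_{\C_0}}{r}]$ for $r$ in a fixed closed subinterval of $(2\epsilon,(2\epsilon)^{-1})$, say $[3\epsilon,(3\epsilon)^{-1}]$. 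Undoing the rescaling $\cl{D}_{\lambda^{-1}}$ and using that $|A|$ scales inversely to length, this gives \eqref{eqn curv bounds} at the point $x$ and time $t$ for radii $\rho$ of the form $3\epsilon\lambda$ up to $(3\epsilon)^{-1}\lambda$ as $\lambda$ ranges over $[\lambda_*(X),\lambda^*(X)]$; taking the union over all such $\lambda$ covers the radius range $[3\epsilon\,\lambda_*(X),\ \tfrac{1}{3\epsilon}\lambda^*(X)]$, which contains $[(\epsilon+C)\lambda_*(X),\ (\epsilon^{-1}-C)\lambda^*(X)]$ once $C=C(n,\C_0)$ is chosen large and $\epsilon_0$ small.

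Two points require a little care. The first is the interface between successive scales $\lambda$: a single value of $\rho$ with $3\epsilon\lambda_*\le\rho\le\tfrac{1}{3\epsilon}\lambda^*$ is witnessed by \emph{some} admissible $\lambda$ (namely any $\lambda\in[\lambda_*,\lambda^*]$ with $3\epsilon\le\rho/\lambda\le(3\epsilon)^{-1}$, and such $\lambda$ exists precisely because the scales $[3\epsilon\lambda,(3\epsilon)^{-1}\lambda]$ for consecutive admissible $\lambda$ overlap — this uses $3\epsilon<(3\epsilon)^{-1}$, i.e.\ $\epsilon_0<1/3$), so the bound at that $\rho$ follows from the estimate at scale $\lambda$; there is no loss from passing between scales. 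The second is that I am evaluating $|A_{M_t}|$ at the \emph{original} time $t$, not at a rescaled time: after applying $\cl{D}_{\lambda^{-1}}$ the relevant time-slice corresponds to $-\lambda^{-2}(t_0-t)$-type shifts, but since $X=(x,t)$ is itself the basepoint, $\cl{D}_{\lambda^{-1}}(\cl{M}-X)$ has its basepoint at $(\mathbf 0,0)$ and the slice through the spatial origin at the base time is exactly the rescaling of $M_t-x$. So the time bookkeeping is automatic.

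The main obstacle I anticipate is making the passage from $C^{1,\alpha}$-closeness to $C^2$-smallness of the graph (hence the two-sided comparison of $|A|$ with $|A_{\C'}|$) genuinely uniform in the basepoint $X$ and in $\lambda$, with constants depending only on $n$ and $\C_0$: this is where White-type local regularity / interior estimates for the mean curvature flow enter, and one must check that the hypotheses of those estimates (a smallness threshold on the local Gaussian density ratio, or equivalently on the $C^{1,\alpha}$-norm and the area ratio, both of which are controlled once $\epsilon\le\epsilon_0$) hold on every annular piece $A_{2\epsilon,(2\epsilon)^{-1}}$ simultaneously. Given Lemma \ref{lem graphicality propogates} and the interior-estimates machinery already invoked in Lemma \ref{lem interior H ests}, this is routine but is the step that carries the real content; the rest is bookkeeping with the scaling of $|A|$ and the radius ranges.
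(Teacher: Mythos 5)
There is a genuine gap, and it is exactly the point you wave away in your ``second point requiring care''. The hypothesis of $\epsilon$-closeness only gives graphicality of $\cl{D}_{\lambda^{-1}}(\cl{M}-X)$ over $\C'_\lambda$ on the rescaled time interval $[-\epsilon^{-2},-\epsilon^2]$, which ends strictly \emph{before} the rescaled time $0$. Your argument extracts $C^2$-smallness of the graph, and hence the two-sided comparison of $|A|$ with $|A_{\C'_\lambda}|$, at rescaled time $-1$; undoing the dilation, that is a curvature bound on $M_{t-\lambda^2}$, not on $M_t$. The lemma asserts the bound for $|A_{M_t}|$ at the basepoint time $t$ itself (rescaled time $0$), and nothing in your argument controls the flow on the annulus during the rescaled interval $(-\epsilon^2,0]$: a priori the surface could lose graphicality there, develop large curvature, or have other sheets move into the annulus, and in the intended application the flow is in fact singular at the space-time origin at rescaled time $0$, so uniform closeness up to time $0$ cannot simply be assumed. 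The claim that ``the time bookkeeping is automatic'' because the basepoint sits at $(\mathbf 0,0)$ is precisely backwards: the basepoint time is the one time slice about which the hypothesis says nothing directly.

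The paper closes this gap with pseudolocality, which you mention in your opening sentence but never actually use. Concretely, \cite[Theorem 1.5]{INS19} is applied to propagate Lipschitz graphicality over $\C'$ forward in time from the closeness window to rescaled time $0$, at the cost of shrinking the annulus to $A_{\epsilon+C,\epsilon^{-1}-C}$ --- this loss of a large constant $C(n,\C_0)$ at both ends is exactly where the radius range $(\epsilon+C)\lambda_*(X)\le\rho\le(\epsilon^{-1}-C)\lambda^*(X)$ in \eqref{eqn curv bounds} comes from, whereas in your write-up $C$ appears only as a bookkeeping device for shrinking a range you have not actually established. Then Ecker--Huisken interior estimates \cite{EckerHuisken91} upgrade the Lipschitz bound to $C^2$-smallness of the graph at time $0$ on a slightly smaller annulus, and the two-sided bound $c_{\C_0}/\rho\le\sup_{|y-x|=\rho}|A_{M_t}|\le C_{\C_0}/\rho$ follows by comparison with the cone and by undoing the dilation, with $\lambda$ ranging over $[\lambda_*(X),\lambda^*(X)]$ as in your covering argument (that part of your proposal is fine). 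So the scaling and covering bookkeeping in your proposal is correct, but the step that carries the real content --- getting from the closeness window to the time slice $t$ --- is missing, and it cannot be dismissed as automatic.
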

	\begin{proof}
		Let $\lambda \in [ \lambda_* (X) , \lambda^*(X) ]$ and consider $\cl{M}' \doteqdot \cl{D}_{\lambda^{-1}} ( \cl{M} - X )$.
		By definition $\cl{M}'$ can be written as a $C^{1,\alpha}$-graph over some $\C' \in \cl{C}$ on $A_{\epsilon, \epsilon^{-1}} \times [- \epsilon^{-2}, - \epsilon^2 ]$, and the $C^{1, \alpha}$-norm is bounded by $\epsilon \le \epsilon_0$.
		
		Let $\delta = \delta(n, \C_0) \ll 1$ denote some small constant to be determined which depends only on $n$ and $\C_0$.
		If $\epsilon_0 \ll 1$ is sufficiently small (depending also on $\delta$) and $C \gg 1$ is sufficiently large (depending also on $\delta$),
		then pseudolocality for mean curvature flow \cite[Theorem 1.5]{INS19} implies that $\cl{M}'$ can be written as a Lipschitz graph over $\C'$ on $A_{\epsilon + C, \epsilon^{-1} - C} \times [ - \epsilon^{-2}, 0]$.
		Namely, if $M'_t$ denotes the time $t$ time-slice of $\cl{M}'$, then
			$$M'_t \cap A_{\epsilon + C, \epsilon^{-1} - C}
			= G_{\epsilon+C, \epsilon^{-1} - C} (u ( \cdot , t) )
			\qquad \forall t \in [-\epsilon^{-2},0]$$
		for some $u : \C' \cap A_{\epsilon + C, \epsilon^{-1} - C} \times [-\epsilon^{-2}, 0] \to T^\perp \C'$ with
		\begin{multline*}
		 \| u(\cdot, t) \|_{C^{0,1}( \C' \cap A_{\epsilon+C, \epsilon^{-1} - C} ) } \\
		\le C' \left( 
		\sup_{x \in \C' \cap A_{\epsilon + C, \epsilon^{-1} - C} } \frac{|u(x,t)|}{|x|} + \sup_{x \ne y \in \C' \cap A_{\epsilon + C, \epsilon^{-1} - C} } \frac{ | u(x, t) - u(y,t)|}{|x-y|} \right)  \le C' \delta \\ \forall t \in [-\epsilon^{-2}, 0], \text{ (where $C'$ is a universal constant).}
		\end{multline*}
		Interior estimates for mean curvature flow \cite{EckerHuisken91} then imply $u$ satisfies $C^2$-estimates on $\C' \cap A_{\epsilon + C + 1, \epsilon^{-1} - C - 1} \times [ - \epsilon^{-2} + 1, 0]$ of the form
		$$\| u \|_{C^2 ( \C' \cap A_{\epsilon + C + 1, \epsilon^{-1} - C - 1} \times [ - \epsilon^{-2} + 1, 0] )} \le C'' \delta $$
		for some constant $C'' = C'' ( n, \C_0 )$.
	
		If $\delta \ll 1$ is sufficiently small depending on $n$ and $\C_0$, then this $C^2$-closeness of $M_0'$ to the cone $\C'$ implies curvature estimates of the form
			$$\frac{c_\C}{ \rho} \le \sup_{|y| = \rho }| A_{M'_0}|(y) \le \frac{C_\C}{\rho}
			\qquad \forall   \epsilon+ C + 1\le  \rho \le \epsilon^{-1} - C -1.$$
		The estimate \eqref{eqn curv bounds} now follows from undoing the dilation $\cl{D}_{\lambda^{-1}}$ and translation and letting $\lambda$ vary in $[ \lambda_*(X), \lambda^*(X)]$.
		Note that, since $\delta = \delta(n, \C_0)$, the dependence of $\epsilon_0$ and $C$ on $\delta$ can instead be regarded as a dependence on $n$ and $\C_0$.
	\end{proof}

\begin{lem} \label{lem lambda > 0}
	There exists $\epsilon_0 = \epsilon_0(n, \C_0)$ such that the following holds for all $\epsilon \le \epsilon_0$:
	if $(M_t^n)_{t \in [-T, 0)}$ is smooth in $U \subset \R^N$ and its space-time track $\cl{M}$ is $\epsilon$-close to $\C \in \cl{C}$ at $X = (x,t) \in U \times (-\infty, 0)$, then $\lambda_*(X) > 0$.
\end{lem}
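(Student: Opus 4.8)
The plan is to argue by contradiction. Suppose $\lambda_*(X) = 0$. Since $\cl{D}_{1^{-1}}(\cl{M}-X) = \cl{M}-X$ is $\epsilon$-close to $\C$ at $(\mathbf 0,0)$ by hypothesis, the interval $[\lambda_*(X),\lambda^*(X)]$ contains $1$, and in particular $\lambda^*(X)\ge 1>0$. The entire argument then rests on the curvature estimate Lemma \ref{lem curv bounds}, which I would apply directly to $\cl{M}$ at the basepoint $X=(x,t)$: after further shrinking $\epsilon_0 = \epsilon_0(n,\C_0)$ (using the constant $C$ from that lemma) so that $\epsilon^{-1}-C>0$, the admissible range $(\epsilon+C)\lambda_*(X)\le\rho\le(\epsilon^{-1}-C)\lambda^*(X)$ degenerates, because $\lambda_*(X)=0$, to $0<\rho\le(\epsilon^{-1}-C)\lambda^*(X)$. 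Hence Lemma \ref{lem curv bounds} yields $\frac{c_{\C_0}}{\rho}\le\sup_{|y-x|=\rho}|A_{M_t}|(y)\le\frac{C_{\C_0}}{\rho}$ for \emph{every} sufficiently small $\rho>0$.

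The remaining steps are bookkeeping. First I would note that the hypothesis that $\cl{M}$ is $\epsilon$-close to $\C$ at $X$ forces the time-interval $[t-\epsilon^{-2},t-\epsilon^2]$, on which $\cl{M}-X$ is graphical over $\C$, to lie in $[-T,0)$, so $t\in(-T,0)$ and the time-slice $M_t$ is a genuine smooth, properly embedded hypersurface of $U$, with $x\in U$. Next, the strictly positive lower bound $\frac{c_{\C_0}}{\rho}>0$ just obtained forces $M_t\cap\{\,|y-x|=\rho\,\}\ne\emptyset$ for all small $\rho>0$ — otherwise the supremum would be taken over the empty set and could not exceed a positive number. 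Letting $\rho\searrow 0$, the point $x$ is therefore a limit point of $M_t$; since $M_t$ is properly embedded it is closed in $U$, so $x\in M_t$. But then $|A_{M_t}|$ is bounded, say by $K<\infty$, on $M_t\cap B_{\rho_0}(x)$ for some $\rho_0>0$, which contradicts $\sup_{|y-x|=\rho}|A_{M_t}|(y)\ge c_{\C_0}/\rho\to\infty$ as $\rho\searrow 0$. This contradiction shows $\lambda_*(X)\ne 0$, hence $\lambda_*(X)>0$, with $\epsilon_0$ inherited from Lemma \ref{lem curv bounds} (shrunk so $\epsilon^{-1}-C>0$).

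Given Lemma \ref{lem curv bounds}, there is no real analytic obstacle here: the heart of the matter — that a flow staying $\epsilon$-close to cones all the way down to scale $0$ must have curvature blowing up like $c_{\C_0}/\rho$ at the basepoint — is already packaged in that lemma via pseudolocality and interior estimates. The only points that need a little care are verifying that $\epsilon$-closeness at $X$ makes $M_t$ a bona fide smooth properly embedded hypersurface (so that ``smoothness near $x$'' is meaningful), and the elementary observation that a positive lower bound on $\sup_{|y-x|=\rho}|A_{M_t}|$ already records that $M_t$ meets every small sphere about $x$, which is exactly what lets us place $x$ on $M_t$ and derive the contradiction from smoothness.
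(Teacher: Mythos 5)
Your argument is correct and is essentially the paper's proof: assuming $\lambda_*(X)=0$, Lemma \ref{lem curv bounds} gives $\sup_{|y-x|=\rho}|A_{M_t}|(y)\ge c_{\C_0}/\rho$ for all small $\rho>0$, which contradicts smoothness of the flow at $(x,t)$. The only difference is that you spell out the step the paper leaves implicit — that the positive lower bound forces $M_t$ to meet every small sphere about $x$, so $x\in M_t$ by properness and the local curvature bound from smoothness yields the contradiction — which is a welcome but not essentially different elaboration.
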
	
\begin{proof}
	If not, \eqref{eqn curv bounds} implies
		$$|A_{M_t}|(x) = \lim_{\rho \searrow 0} \sup_{| y - x|= \rho} |A_{M_t}|(y) = +\infty,$$
	which contradicts the smoothness of the flow at $(x,t)$.	
\end{proof}

Next, we show that $\lambda_*( x , t)$ satisfies a strict inequality in an annulus.

\begin{lem} \label{lem almost ! minimizer}
	If $\epsilon_0 = \epsilon_0(n, \C_0) \ll 1$ is sufficiently small (depending on $n, \C_0$), then 
	there exists a constant $C = C(n, \C_0)$ such that
	$$\lambda_*( y, t) > \lambda_* ( x, t ) 	\qquad 
	\forall y \in A_{ \lambda_*(x,t) [ \epsilon + C ] , \lambda^*(x, t) [ \epsilon^{-1} - C] }(x).$$
\end{lem}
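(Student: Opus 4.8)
The plan is to argue by contradiction using the curvature scaling established in Lemma \ref{lem curv bounds}. Suppose there is a point $y \in A_{\lambda_*(x,t)[\epsilon+C], \lambda^*(x,t)[\epsilon^{-1}-C]}(x)$ with $\lambda_*(y,t) \le \lambda_*(x,t)$. Set $\rho_0 \doteqdot |y - x|$, so by hypothesis $\lambda_*(x,t)[\epsilon+C] \le \rho_0 \le \lambda^*(x,t)[\epsilon^{-1}-C]$. The idea is that, at scale $\rho_0$ around $x$, $\cl{M}$ genuinely looks like the smooth cone $\C$, so the curvature at $y$ is comparable to $1/\rho_0$; but if $\lambda_*(y,t)$ were as small as $\lambda_*(x,t) \le \rho_0/(\epsilon+C)$, then Lemma \ref{lem curv bounds} applied at the basepoint $(y,t)$ would force the curvature near $y$ at scales comparable to $\rho_0$ to be comparable to $1/(\text{much smaller scale})$, hence much larger than $1/\rho_0$ — a contradiction once the constants are chosen to separate the two regimes.

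More precisely, first I would apply Lemma \ref{lem curv bounds} at the basepoint $X = (x,t)$: since $\rho_0$ lies in the admissible range $(\epsilon+C)\lambda_*(x,t) \le \rho_0 \le (\epsilon^{-1}-C)\lambda^*(x,t)$, we get
$$\sup_{|z - x| = \rho_0} |A_{M_t}|(z) \le \frac{C_{\C_0}}{\rho_0},$$
and in particular $|A_{M_t}|(y) \le C_{\C_0}/\rho_0$ (and also a lower bound $c_{\C_0}/\rho_0 \le \sup_{|z-x|=\rho_0}|A_{M_t}|$, which combined with the smooth convergence on the annulus gives $|A_{M_t}|(y) \ge c_{\C_0}/(2\rho_0)$ after a Harnack-type/gradient comparison — this will need $\epsilon_0$ small so the graph is $C^2$-close to the cone on the relevant scale, cf. the interior estimates in the proof of Lemma \ref{lem curv bounds}). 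Next, suppose for contradiction $\lambda_*(y,t) \le \lambda_*(x,t)$. Pick a radius $\rho_1$ with $(\epsilon + C)\lambda_*(y,t) \le \rho_1$ and $\rho_1 \ll \rho_0$; this is possible provided $\lambda_*(y,t)$ is small relative to $\rho_0$, which holds because $\lambda_*(y,t) \le \lambda_*(x,t) \le \rho_0/(\epsilon + C)$, so we may take e.g. $\rho_1 = (\epsilon+C)\lambda_*(y,t) \le \rho_0$. We also need $\rho_1$ in the upper range for basepoint $(y,t)$, i.e. $\rho_1 \le (\epsilon^{-1}-C)\lambda^*(y,t)$, which is immediate since $\lambda^*(y,t) \ge 1 \ge \lambda_*(y,t)$ up to the interval normalization and $\rho_1$ is comparable to $\lambda_*(y,t)$. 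Lemma \ref{lem curv bounds} at $(y,t)$ then gives $\sup_{|z-y|=\rho_1}|A_{M_t}|(z) \ge c_{\C_0}/\rho_1$. To derive a contradiction I would compare this with the first bound: choosing $z$ on the sphere $|z-y| = \rho_1$ realizing (or nearly realizing) the supremum, $z$ lies within distance $\rho_1$ of $y$, hence within distance $\rho_0 + \rho_1$ of $x$; using the curvature bound from basepoint $x$ on the annulus containing $z$ (valid since $z$ stays in the good range as long as $\rho_1 \ll \rho_0$ and the constants are tuned), $|A_{M_t}|(z) \le 2C_{\C_0}/\rho_0$. Combining, $c_{\C_0}/\rho_1 \le 2C_{\C_0}/\rho_0$, i.e. $\rho_0 \le (2C_{\C_0}/c_{\C_0})\rho_1 = (2C_{\C_0}/c_{\C_0})(\epsilon+C)\lambda_*(y,t) \le (2C_{\C_0}/c_{\C_0})(\epsilon+C)\lambda_*(x,t)$. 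But we assumed $\rho_0 \ge (\epsilon+C)\lambda_*(x,t)$, which is consistent; the contradiction must instead come from pushing $\rho_1$ strictly smaller than the minimal admissible radius. The clean way: the definition of $\lambda_*(y,t)$ as the \emph{largest} interval endpoint means that just below $\lambda_*(y,t)$ the flow fails to be $\epsilon$-close to any rotated cone at $(y,t)$, which by the contrapositive of the curvature dichotomy forces curvature blow-up faster than $1/\rho$ as $\rho \searrow (\epsilon+C)\lambda_*(y,t)$ — contradicting the \emph{uniform} bound $|A_{M_t}| \le C_{\C_0}/\rho_0$ inherited from basepoint $x$ on the whole ball $B_{\rho_0/2}(y) \subset A_{(\epsilon+C)\lambda_*(x,t)/2,\, (\epsilon^{-1}-C)\lambda^*(x,t)}(x)$ once $C$ is enlarged.

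I expect the main obstacle to be the bookkeeping of constants: one must choose $C = C(n,\C_0)$ large enough (and $\epsilon_0$ small enough) so that the annulus $A_{\lambda_*(x,t)[\epsilon+C], \lambda^*(x,t)[\epsilon^{-1}-C]}(x)$ around $x$ sits strictly inside the region where Lemma \ref{lem curv bounds}'s two-sided bound from basepoint $x$ holds, with enough room that a full ball $B_{c\rho_0}(y)$ around any such $y$ (with $c = c(n,\C_0)$) also stays in that good region. Then the point is that on this ball the second fundamental form of $M_t$ is bounded above by a \emph{fixed} multiple of $1/\rho_0$, so $M_t$ near $y$ cannot look like a cone at any scale $\gg \rho_0$ that is already pinching — more to the point, by the $C^2$-closeness argument of Lemma \ref{lem curv bounds} (interior estimates plus pseudolocality), $\cl{M}$ is in fact $\epsilon$-close to a rotated cone $\C' \in \cl{C}$ at $(y,t)$ down to scales comparable to $\rho_0$ (not just comparable to $\lambda_*(x,t)$), which forces $\lambda_*(y,t) \lesssim \rho_0 / (\epsilon+C) $ from above but, crucially, also forces $\lambda_*(y,t)$ to be \emph{strictly larger} than $\lambda_*(x,t)$ because near $x$ the flow is $\epsilon$-close to $\C$ only down to scale $(\epsilon+C)\lambda_*(x,t)$ whereas near $y$ — being a full $\rho_0 \gg \lambda_*(x,t)$ away from the more singular region resolved at scale $\lambda_*(x,t)$ — the graphical representation persists to correspondingly smaller scales. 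Making this last comparison quantitative, via the continuity of $\lambda_*$ (already noted in the text) together with the scaling built into the curvature bounds, is the crux; everything else is a routine iteration of the interior/pseudolocality estimates already invoked in the proof of Lemma \ref{lem curv bounds}.
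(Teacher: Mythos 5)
Your overall strategy, playing the lower curvature bound of Lemma \ref{lem curv bounds} at basepoint $(y,t)$ against the upper bound at basepoint $(x,t)$, is exactly the paper's, but your execution has a genuine gap, and you notice it yourself: with $\rho_1 = (\epsilon+C)\lambda_*(y,t)$ your comparison only yields $\rho_0 \le (2C_{\C_0}/c_{\C_0})(\epsilon+C)\lambda_*(x,t)$, which is compatible with $\rho_0 \ge (\epsilon+C)\lambda_*(x,t)$, so no contradiction results. The missing idea is that the constant $C$ in the statement of Lemma \ref{lem almost ! minimizer} need not be (and is not) the constant of Lemma \ref{lem curv bounds}: the paper restricts to $y$ with $|y-x| > r_1 \doteqdot (\epsilon + 100C)\lambda_*(x,t)$ (and $|y-x| < (\epsilon^{-1}-100C)\lambda^*(x,t)$), where $C$ is the constant from Lemma \ref{lem curv bounds}, and compares on the single sphere $\partial B_{r_0}(y)$ with $r_0 \doteqdot (\epsilon+C)\lambda_*(x,t)$. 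Since $\lambda_*(y,t)\le\lambda_*(x,t)$ puts $r_0$ in the admissible range for basepoint $y$, Lemma \ref{lem curv bounds} gives $\sup_{\partial B_{r_0}(y)}|A_{M_t}| \ge c_{\C_0}/r_0$; on the other hand every point of $\partial B_{r_0}(y)$ has distance from $x$ strictly greater than $r_1 - r_0 = 99C\lambda_*(x,t) > 4r_0$ and lies in the good annulus about $x$, so the basepoint-$x$ bound gives $|A_{M_t}| < C_{\C_0}/(4r_0) = c_{\C_0}/r_0$ there, a contradiction; relabelling $100C$ as $C$ finishes. So the fix is precisely the bookkeeping of constants you flagged as the obstacle, and it is the whole content of the lemma; you did not carry it out.

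The fallback arguments you substitute do not close the gap. The claim that failure of $\epsilon$-closeness just below scale $\lambda_*(y,t)$ ``forces curvature blow-up faster than $1/\rho$'' is the converse of Lemma \ref{lem curv bounds}, not its contrapositive, and is established nowhere: a flow can fail to be a small graph over every rotated cone centered at $y$ while having modest curvature (for instance if it is nearly planar near $y$). The final paragraph likewise conflates being a graph over the fixed cone $\C$ with vertex at $x$ near the point $y$ with being $\epsilon$-close to some element of $\cl{C}$ with vertex at $y$; the former does persist to small scales around $y$, but exactly because the flow then has curvature $\lesssim C_{\C_0}/\rho_0$ on a ball $B_{c\rho_0}(y)$ it \emph{cannot} be $\epsilon$-close at $(y,t)$ to any rotated cone at scales much smaller than $\rho_0$ (closeness would force curvature of order $1/\rho$ on the corresponding sphere), and making that quantitative is again the curvature comparison above, not a separate continuity argument. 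A minor further point: your intermediate claim $|A_{M_t}|(y) \ge c_{\C_0}/(2\rho_0)$ is not justified, since Lemma \ref{lem curv bounds} bounds from below only the supremum of $|A_{M_t}|$ over a sphere, not its value at a prescribed point; the correct argument never needs a pointwise lower bound.
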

\begin{proof}
	Fix $C = C(n, \C_0) \ge 1$ as in Lemma \ref{lem curv bounds}. 
	Assume $\epsilon_0 = \epsilon_0 ( n, \C_0) \ll 1$ is small enough so that Lemma \ref{lem curv bounds} holds.
	Let $\epsilon \le \epsilon_0$.
	To simplify notation, denote
	\begin{align*}
		r_0 &\doteqdot ( \epsilon + C) \lambda_*(x, t), 	&
		R_0 &\doteqdot ( \epsilon^{-1} - C) \lambda^*(x, t), 	\\
		r_1 &\doteqdot ( \epsilon + 100 C) \lambda_*(x, t), 	&
		R_1 &\doteqdot ( \epsilon^{-1} - 100C) \lambda^*(x, t). 	
	\end{align*}
	Let $y \in A_{r_1, R_1}(x)$.
	Suppose for the sake of contradiction that $\lambda_*(y, t) \le \lambda_*(x, t)$.
	Then
		$$( \epsilon + C) \lambda_*(y, t) \le r_0 \le ( \epsilon^{-1} - C) \lambda^* (y, t)$$
	if $\epsilon_0(n, \C_0) \ll 1$.
	Lemma \ref{lem curv bounds} therefore gives curvature estimates of the form
	\begin{equation} \label{eqn lower curv bound}
		\frac{c_{\C_0}}{r_0} \le \sup_{|y'-y| = r_0} |A_{M_t}|(y').	
	\end{equation}

	Observe also that the sphere $\partial B( y, r_0)$ is contained in the annulus $A_{r_0, R_0}(x)$ based at $x$.
	Indeed, $|y' -y| = r_0$ implies
	\begin{align*}
		|y' - x|
		&\ge | y - x| - |y' - y|	\\
		&> r_1 - r_0	\\
		&\ge ( 1 + 98 C )\lambda_*(x, t) \\
		&> ( \epsilon + C) \lambda_*(x, t) \\
		&= r_0,
	\end{align*}
	and an analogous argument applies to show $|y' - y|= r_0$ implies $|y' - x| < R_0$.
	Therefore, Lemma \ref{lem curv bounds} based at $x$ also applies and gives
	\begin{align*}
		\sup_{|y' - y| = r_0} |A_{M_t}| (y') 
		&\le \sup_{|y' - y| = r_0} \frac{ C_{\C_0}}{ | y' - x|} \\
		&\le \sup_{|y' - y| = r_0} \frac{ C_{\C_0}}{  | y - x| - | y' - y| }	\\
		&< \frac{ C_{\C_0}}{ r_1 - r_0} \\
		&< \frac{ C_{\C_0}}{4 r_0}	\\
		&= \frac{ c_{\C_0}}{r_0} ,
	\end{align*}
	which contradicts estimate \eqref{eqn lower curv bound}.
	This completes the proof after relabelling $100 C$ to $C$.	
\end{proof}

\subsection{Finding Hardt-Simon Minimal Surfaces} \label{subsect finding Hardt-Simon surfs}

Assume additionally throughout this subsection that $\cl{M}$ has $H \in L^\infty L^\infty_{loc}(U \times [-T, 0) ) $ and that $\mathbf 0 \in U$.
Take a sequence $\lambda_i \searrow 0$ and suppose
	$$\cl{M}_i \doteqdot \cl{D}_{\lambda_i^{-1} } \cl{M} \rightharpoonup \cl{M}_{\C_0}$$
where $\cl{M}_{\C_0} = \C_0 \times (-\infty, 0)$ is the flow of the stationary cone  $\C_0$.

%note: in the notation of LSS22, we've fixed t = -1 throughout
Since $\cl{M}_i \rightharpoonup \cl{M}_{\C_0}$, $\cl{M}_i$ is $\epsilon$-close to $\C_0$ at $(\mathbf 0, -1)$ for $i \gg 1$ and
$\lim_{i \to \infty} \lambda_*( \mathbf 0, -1; \cl{M}_i )= 0$.
Let $R_0 = C(n, \C_0)$ be the constant from Lemma \ref{lem almost ! minimizer}
and obtain $x_i \in \ol{B}_{2 R_0} ( \mathbf 0 )$ such that 
	$$\lambda_* (x_i, -1 ; \cl{M}_i ) = \min_{x \in \ol{B}_{2 R_0} ( \mathbf 0 )} \lambda_*(x, -1; \cl{M}_i ).$$
Denote $X_i = (x_i, -1)$ and observe $\lim_{i \to \infty} \lambda_*(X_i; \cl{M}_i)= \lim_{i \to \infty} \lambda_*(\mathbf 0 , -1; \cl{M}_i) = 0$.
Moreover, Lemma \ref{lem almost ! minimizer} implies that the minimizer $x_i$ must lie in $\ol{B}_{\lambda_*(\mathbf 0, -1; \cl{M}_i) [ \epsilon + C ] }( \mathbf 0)$, which implies $\lim_{i \to \infty} x_i = \mathbf 0$.

Define
	$$\cl{M}'_{i} \doteqdot \cl{D}_{\frac{1}{ \lambda_* ( X_i; \cl{M}_i ) } } ( \cl{M}_i - X_i ).$$
By construction, for all $i$, $\cl{M}_{i}'$ is $\epsilon$-close to some $\C'_{i} \in \cl{C}$ at $(\mathbf 0 , 0)$.

\begin{remark}
	In what follows, we deviate from the notation of subsection \ref{subsect Huisken's mono formula} and write Huisken's monotonic quantity as
	\begin{align*}
		\Theta( (x,t), \cl{M}, r ) &= \int \Phi_{x,t}( \cdot, t-r^2) d \mu_{t - r^2},  \\
		\Theta( (x,t), \cl{M} ) &= \lim_{r \searrow 0}  \int \Phi_{x,t}( \cdot, t-r^2) d \mu_{t - r^2}
	\end{align*}
	where $\cl{M} = ( \mu_t)_{t \in (a,b)}$ is a Brakke flow in $\R^{n+1}$ and $r>0$.
\end{remark}

\begin{lem} \label{lem ancient limit}
	Along some subsequence $i \to \infty$,
	$$\cl{M}_{i}' \rightharpoonup \hat{\cl{M}} \doteqdot ( \hat \mu_t )_{t \in \R} $$
	where $\hat{\cl{M}}$ is an eternal integral $n$-Brakke flow in $\R^{n+1}$ 
	such that 
	\begin{enumerate}
		\item \label{item ancient entropy bound} (entropy bound) for all $r > 0$ and $(x,t) \in \R^{n+1} \times \R$
			$$\Theta((x,t), \hat{\cl{M}}, r ) \le \Theta ( (\mathbf 0, 0), \cl{M}_{\C_0} ),$$
		\item \label{item ancient timeslices} for all $t \in \R$, $\hat \mu_t = \mu_{\hat V_t}$ for some stationary integral $n$-varifold $\hat V_t$,
		\item \label{item ancient H bound} $\| H \|_{L^\infty L^\infty ( \R^{n+1} \times \R ) } = 0$, and
		\item \label{item ancient close to cone} for all $\lambda \in [1, \infty)$, $\cl{D}_{\lambda^{-1}} \hat{\cl{M}}$ is $\epsilon$-close to some $\hat{\mathbf C}_{\lambda} \in \cl{C}$.
	\end{enumerate}
\end{lem}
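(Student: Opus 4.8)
The plan is to establish a uniform Gaussian density (entropy) bound for the rescaled flows $\cl{M}'_i = \cl{D}_{1/s_i}(\cl{M}_i - X_i)$, $s_i \doteqdot \lambda_*(X_i; \cl{M}_i) \searrow 0$, use it to extract a subsequential limit, and then verify the four listed properties in turn. First I would record the two relevant scaling identities: the parabolic change of variables gives $\Theta((x,t), \cl{M}'_i, r) = \Theta((x_i + s_i x,\, -1 + s_i^2 t),\, \cl{M}_i,\, s_i r)$ for all $(x,t)$ and all $r > 0$, while unwinding both dilations gives $|H_{\cl{M}'_i}| = s_i \lambda_i\, |H_{\cl{M}}|$ under the natural identification of points. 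Since $\cl{M}_i \rightharpoonup \cl{M}_{\C_0}$ and $x_i \to \mathbf 0$, the base points $(x_i + s_i x, -1 + s_i^2 t)$ converge to $(\mathbf 0, -1)$ while the scales $s_i r$ shrink to $0$; upper semicontinuity of Gaussian density (with Huisken monotonicity in the scale variable) therefore gives $\limsup_{i} \Theta((x,t), \cl{M}'_i, r) \le \Theta((\mathbf 0, -1), \cl{M}_{\C_0}) = \Theta((\mathbf 0, 0), \cl{M}_{\C_0})$, the final equality because $\cl{M}_{\C_0}$ is the static flow of a dilation-invariant cone. Consequently, for $i \gg 1$ the $\cl{M}'_i$ have Gaussian density ratios bounded by $\Theta((\mathbf 0, 0), \cl{M}_{\C_0}) + 1$, hence locally uniform area ratio bounds by Huisken's monotonicity formula (cf. Lemma \ref{lem area ratio bound}); moreover $s_i \lambda_i \to 0$ and the region of $\cl{M}$ corresponding under these rescalings to a fixed $K \times I$ is, for $i \gg 1$, contained in a fixed $\ol{B}_\rho(\mathbf 0) \times [-T, 0)$ on which $H_{\cl{M}}$ is bounded, so $\|H_{\cl{M}'_i}\|_{L^\infty L^\infty(K \times I)} \to 0$; and the time-domains of the $\cl{M}'_i$ exhaust $\R$.

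With these bounds in hand, standard compactness of integral $n$-Brakke flows with locally uniform area bounds lets me pass to a subsequence along which $\cl{M}'_i \rightharpoonup \hat{\cl{M}} = (\hat\mu_t)_{t \in \R}$, an integral $n$-Brakke flow on $\R^{n+1}$ which is eternal since the time-domains exhaust $\R$. Property \ref{item ancient entropy bound} is then immediate: for fixed $(x,t)$ and $r > 0$, weak convergence of the time-slices together with the uniform area ratio bounds (controlling the Gaussian tails) give $\Theta((x,t), \hat{\cl{M}}, r) = \lim_i \Theta((x,t), \cl{M}'_i, r) \le \Theta((\mathbf 0, 0), \cl{M}_{\C_0})$. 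For \ref{item ancient timeslices} and \ref{item ancient H bound}: since $\|H_{\cl{M}'_i}\|_{L^\infty L^\infty_{loc}} \to 0$, the compactness theorem for integral varifolds with generalized mean curvature bounds (Lemma \ref{Lem Compactness}), applied slice-by-slice, shows $H_{\hat{\cl{M}}} \equiv 0$, which is \ref{item ancient H bound}; applying Lemma \ref{lem nice representative} to $\hat{\cl{M}}$ then produces, for every $t$, a stationary integral $n$-varifold $\hat V_t$ with $\hat\mu_t \le \mu_{\hat V_t}$ and with equality off a countable set. To obtain equality for \emph{all} $t$, I would note that $\hat{\cl{M}}$ is in fact static: the $\cl{M}'_i$ are smooth mean curvature flows, so Brakke's inequality is an equality for them, and $\|H_{\cl{M}'_i}\|_{L^\infty L^\infty(K \times I)} \to 0$ together with the uniform area bounds gives, for every $f \in C^1_c(\R^{n+1})$ and a.e. $t_0 < t_1$, $|\hat\mu_{t_1}(f) - \hat\mu_{t_0}(f)| = \lim_i |\mu^i_{t_1}(f) - \mu^i_{t_0}(f)| = 0$, where $\mu^i_t$ denotes the time-$t$ slice of $\cl{M}'_i$; hence $t \mapsto \hat\mu_t$ is constant, and since a.e. time-slice is the measure of a stationary integral varifold, every $\hat\mu_t$ is the measure of one and the same stationary integral varifold $\hat V$.

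It remains to prove \ref{item ancient close to cone}, for which I would show $\lambda^*(X_i; \cl{M}_i)/s_i \to \infty$. For each fixed $L \ge 1$, since $\cl{M}_i \rightharpoonup \cl{M}_{\C_0}$ and $x_i \to \mathbf 0$, the flows $\cl{D}_{\lambda^{-1}}(\cl{M}_i - X_i)$ converge to $\cl{M}_{\C_0}$ uniformly for $\lambda \in [1, L]$, and by White's local regularity theorem \cite{White05} this convergence is smooth on the relevant annular region, so these flows are $\epsilon$-close to $\C_0$ for $i \gg 1$; hence $\lambda^*(X_i; \cl{M}_i) \ge L$ for $i \gg 1$, i.e. $\lambda^*(X_i; \cl{M}_i) \to \infty$, while $s_i = \lambda_*(X_i; \cl{M}_i) \to 0$. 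By the definition of $\lambda_*, \lambda^*$ and the dilation-composition rule, $\cl{D}_{\mu^{-1}} \cl{M}'_i = \cl{D}_{1/(\mu s_i)}(\cl{M}_i - X_i)$ is $\epsilon$-close to some $\C'_{i, \mu} \in \cl{C}$ exactly for $\mu \in [1,\, \lambda^*(X_i; \cl{M}_i)/s_i]$, an interval which exhausts $[1, \infty)$. Fixing $\lambda \in [1, \infty)$, for $i \gg 1$ the flow $\cl{D}_{\lambda^{-1}} \cl{M}'_i$ is $\epsilon$-close to $\C'_{i, \lambda} \in \cl{C}$, and after a further subsequence with $\C'_{i, \lambda} \to \hat{\C}_\lambda \in \cl{C}$ (compactness of $\cl{C}$), using that $\epsilon$-closeness survives the smooth convergence on the relevant annulus, $\cl{D}_{\lambda^{-1}} \hat{\cl{M}}$ is $\epsilon$-close to $\hat{\C}_\lambda$. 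A diagonal argument over a countable dense subset of $[1, \infty)$, with continuity in $\lambda$ and closedness of $\cl{C}$, then handles all $\lambda$.

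The step I expect to be the main obstacle is the entropy bound and its role in the compactness argument: deriving the Gaussian density transformation, correctly applying upper semicontinuity along base points that move and scales that shrink, and confirming that the resulting area bounds are genuinely uniform in $i$ so that Brakke compactness applies. The bookkeeping for \ref{item ancient close to cone} --- that $\lambda^*(X_i)/s_i \to \infty$ and that $\epsilon$-closeness passes to the limit --- is the other delicate point, though of a more routine character.
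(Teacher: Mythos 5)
Your overall strategy is the same as the paper's: the scaling identity $\Theta((x,t),\cl{M}'_i,r)=\Theta(X_i+(s_ix,s_i^2t),\cl{M}_i,s_ir)$ plus upper semicontinuity of $\Theta$ gives the uniform entropy bound, Huisken's monotonicity then gives uniform local area bounds, Brakke compactness produces the eternal limit (eternality from $s_i=\lambda_*(X_i;\cl{M}_i)\to 0$), the scaling $|H_{\cl{M}'_i}|=s_i\lambda_i|H_{\cl{M}}|\to 0$ gives item \eqref{item ancient H bound}, and the observation that $\lambda s_i\in[\lambda_*(X_i),\lambda^*(X_i)]$ for $i\gg1$ (the paper only needs $\lambda^*\ge 1$ by definition together with $s_i\to0$, so your separate argument that $\lambda^*(X_i)\to\infty$ and the diagonalization over $\lambda$ are harmless but unnecessary) gives item \eqref{item ancient close to cone}, with $\epsilon$-closeness passed to the limit exactly as in the paper.

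The one place you genuinely diverge is item \eqref{item ancient timeslices}. The paper gets it directly: for \emph{each fixed} $t_0$, the slices of $\cl{M}'_i$ at time $t_0$ are smooth hypersurfaces, hence integral varifolds with $\|H_i\|_{L^\infty}\to0$ and uniformly bounded local areas, so Lemma \ref{Lem Compactness} gives a subsequential varifold limit $\hat V_{t_0}$ which is stationary and integral, and the Brakke convergence identifies $\mu_{\hat V_{t_0}}=\hat\mu_{t_0}$; since the slices are smooth at \emph{every} time (not just a.e.\ time), this works for every $t_0$ with no further input. You instead invoke Lemma \ref{lem nice representative} (which only gives $\hat\mu_t=\mu_{\hat V_t}$ off a countable set) and then argue that $\hat{\cl{M}}$ is static because the approximating flows are smooth with $H_i\to0$. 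That staticity claim is in fact true (and is ultimately the content of the much harder Theorem \ref{thm Hardt-Simon limit}), but as written your step has a soft spot: you only establish $\hat\mu_{t_0}(f)=\hat\mu_{t_1}(f)$ for ``a.e.\ $t_0<t_1$'', and constancy at \emph{all} times does not follow from that alone --- you need either that the compactness theorem gives time-slice convergence at every time (which it does, but you should say so and use it), or an extra application of Brakke's inequality with $H\equiv0$ at a.e.\ time to squeeze the exceptional times. Since the slice-wise compactness you already use for item \eqref{item ancient H bound} yields item \eqref{item ancient timeslices} at every $t$ directly, the detour is avoidable; with that repair (or replacement) your proof is correct.
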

\begin{proof}
	For any $r > 0$ and $(x,t) \in \R^{n+1} \times \R$,
	\begin{equation} \label{eqn entropy bound}
	\begin{aligned}
		\limsup_{i \to \infty} \Theta( (x, t), \cl{M}'_{i}, r )
		&=\limsup_{i \to \infty} \Theta( (x,t), \cl{D}_{\lambda_*(X_i)^{-1}} ( \cl{M}_i - X_i ), r ) \\
		&= \limsup_{i \to \infty} \Theta( X_i + (x \lambda_*(X_i), t \lambda_*(X_i)^2), \cl{M}_i , r \lambda_*(X_i)  )  \\
		&\le \Theta ( (\mathbf 0, -1), \cl{M}_{\C_0} , 0)
	\end{aligned}
	\end{equation}
	by the limiting behavior of $X_i, \cl{M}_i, \lambda_*(X_i)$ and the upper semi-continuity of $\Theta$.
	Huisken's monotonicity formula therefore implies that the $\cl{M}'_{i}$ satisfy local uniform area bounds. 
	Compactness of Brakke flows with local uniform area bounds then allows us to extract a subsequential limit $\hat{\cl{M}}$ of the $\cl{M}'_{i}$.
	Observe that, since $\lim_{i \to \infty} \lambda_*(X_i) = 0$, the limiting integral $n$-Brakke flow $\hat{\cl{M}}$ is defined for all $t \in \R$.
	Additionally, \eqref{eqn entropy bound} shows that for any $r > 0$ and $(x,t) \in \R^{n+1} \times \R$
		$$\Theta( (x,t), \hat{\cl{M}}, r) = \lim_{i \to \infty} \Theta( (x,t), \cl{M}_{i}', r) \le \Theta( (\mathbf 0, -1) , \cl{M}_{\C_0}, 0) = \Theta ( (\mathbf 0, 0) , \cl{M}_{\C_0}, 0 ).$$

	The fact that $\cl{M}$ has mean curvature $H \in L^\infty L^\infty_{loc}( U \times [-T, 0) )$ implies that the mean curvature $H_i$ of the rescalings $\cl{M}'_{i}$ has $\lim_{i \to \infty} \| H_i \|_{L^\infty L^\infty ( K \times [a,b] )} = 0$ for any $K \times [a,b] \Subset \R^{n+1} \times \R$.
	Thus, the limiting Brakke flow $\hat{\cl{M}}$ has $\| H \|_{L^\infty L^\infty( \R^{n+1} \times \R )} =0$.
	
	Let $t_0 \in \R$ and consider the $t_0$-timeslice $\cl{M}_i'(t_0)$ of $\cl{M}_i'$.
	Lemma \ref{Lem Compactness} applies to give that some subsequence $\cl{M}_i'(t_0)$ converges in the weak varifold sense to a stationary integral $n$-varifold $\hat V_{t_0}$.
	In particular, the underlying measures converge $\mu_{\cl{M}_i'(t_0)} \rightharpoonup \mu_{\hat V_{t_0}}$.
	On the other hand, the Brakke flow convergence $\cl{M}_i' \rightharpoonup \hat{\cl{M}}$ implies $\mu_{ \cl{M}'_i(t_0)} \rightharpoonup \hat \mu_{t_0}$ and so $\hat \mu_{t_0} = \mu_{\hat V_{t_0}}$.
	
	Let $\lambda \in [1, \infty)$.
	Observe that $ \lim_{i\to \infty} \lambda_*( X_i; \cl{M}_i) = 0$ and $\lambda^* ( X_*; \cl{M}_i ) \ge 1$ for all $i \gg 1$.
	Thus, $\lambda \lambda_* ( X_i; \cl{M}_i ) \in \left[ \lambda_*( X_i ; \cl{M}_i) , \ \lambda^* ( X_i ; \cl{M}_i ) \right]$ for all $i \gg 1$.
	It follows that $\cl{D}_{\lambda^{-1} } \cl{M}_{i}' = \cl{D}_{\frac{1}{ \lambda \lambda_* (X_i ; \cl{M}_i) } } ( \cl{M}_i - X_i )$ is $\epsilon$-close to some $\C_{\lambda, i}' \in \cl{C}$ for all $i \gg 1$.
	After passing to a subsequence and using that $\cl{D}_{\lambda^{-1}} \cl{M}_{i}' \rightharpoonup \cl{D}_{\lambda^{-1}} \hat{\cl{M}}$, it follows that there exists a limiting cone $\hat \C_\lambda \in \cl{C}$ such that $\cl{D}_{\lambda^{-1}} \hat{\cl{M}}$ is $\epsilon$-close to $\hat \C_\lambda$.
	Since $\lambda \in [1, \infty)$ was arbitrary, this completes the proof.
\end{proof}

\begin{definition}
	For $p, q \in \N$ with $p+q = n-1$, define the $n$-dimensional \emph{quadratic minimal cone} or \emph{generalized Simons cone} $\C^{p,q}$ to be the hypersurface
		$$\C^{p,q} \doteqdot \{ (x,y) \in \R^{p+1} \times \R^{q+1} : q|x|^2 = p |y|^2 \} \subset \R^{n+1}.$$
\end{definition}

\begin{remark}
	$\C^{p,q}$ is minimal for any $p,q$.
	Moreover, $\C^{3,3}, \C^{2,4} \text{ (equivalently } \C^{4,2}),$ and $\C^{p,q}$ for any $p+q >6$ are all area minimizing.
	In fact, these are the only quadratic minimal cones which are area minimizing (see e.g. \cite{Simoes74} and references therein).
\end{remark}

\begin{definition}[Hardt-Simon foliation] \label{defn Hardt-Simon foliation}
	Let $\C^{p,q}$ be a quadratic minimal cone which is area minimizing.
	Denote the two connected components of $\R^{n+1} \setminus \C^{p,q}$ by $E_{\pm}$. 
	\cite{HS85} showed that there exist smooth, minimal surfaces $S_{\pm} \subset E_{\pm}$ which are both asymptotic to $\C^{p,q}$ at infinity and whose dilations $\lambda S_{\pm}$ $(\lambda > 0)$ foliate $E_{\pm}$, respectively.
	By dilating, we can assume without loss of generality that both $S_\pm$ are normalized to have $\dist (S_{\pm}, \mathbf 0 ) = 1$.
	
	For any $\lambda \in \R$, define
	\begin{equation} \label{eqn Hardt-Simon foliation}
		S_\lambda \doteqdot  
		\left \{ \begin{aligned}
 			&\lambda S_+, \quad & \text{if } \lambda > 0, \\
 			& \C^{p,q}, & \text{if } \lambda = 0, \\
 			&-\lambda S_-, \quad & \text{if } \lambda < 0. \\
		 \end{aligned} \right.
	\end{equation}
	We refer to this family of minimal hypersurfaces as the \emph{Hardt-Simon foliation}.
\end{definition}

\begin{thm} \label{thm Hardt-Simon limit}
	Let $\C_0 = \C^{p,q} \subset \R^{n+1}$ ($p+q = n-1$) be a generalized Simons cone, and let $\cl{C} = \{ A \cdot \C_0 : A \in O(n+1) \}$ denote all rotations of the cone.
	Let $0 < \epsilon_0 = \epsilon_0(n, \C_0) \ll 1$ be sufficiently small so that Lemmas \ref{lem curv bounds}--\ref{lem almost ! minimizer} hold, and let $0 < \epsilon \le \epsilon_0$.
	
	If $\C_0 = \C^{p,q}$ is area-minimizing,
	then there exist $\lambda_0 \ne 0, A_0 \in O(n+1),$ and $a_0 \in \R^{n+1}$ such that 
	the eternal Brakke flow $\hat{\cl{M}}$ obtained in Lemma \ref{lem ancient limit} is the static flow of the smooth Hardt-Simon minimal surface $M =  A_0 \cdot S_{\lambda_0} + a_0$ for all $t \in \R$.
	
	Additionally, the scale $\lambda_0 \ne 0$ and center $a_0 \in \R^{n+1}$ are such that, for all $0 < \epsilon' < \epsilon$ and all $\C \in \cl{C}$, $\hat{\cl{M}}$ is not $\epsilon'$-close to $\C$.
\end{thm}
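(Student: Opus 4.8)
The plan is to (i) show that the normalization built into the $\cl{M}_i'$ — namely $\lambda_*(\mathbf 0,0;\cl{M}_i',\epsilon)=1$, $\lambda^*(\mathbf 0,0;\cl{M}_i',\epsilon)\to\infty$, and the minimality of $x_i$ — survives the weak limit; (ii) read off from Lemma \ref{lem ancient limit} and standard regularity that the time-$0$ varifold $\hat V_0$ (stationary and integral by item (2), with $\hat\mu_0=\mu_{\hat V_0}$) is a smooth stationary minimal hypersurface on $A_{\epsilon,\infty}(\mathbf 0)$ asymptotic to a rotated copy of $\C_0$ at infinity; (iii) classify $\hat V_0$ via the Hardt--Simon foliation together with the strong maximum principle for stationary varifolds; and (iv) deduce the borderline-closeness statement from (i).

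\smallskip\noindent\emph{Step 1 (the normalization passes to the limit).} Since $\lambda_*$ and $\lambda^*$ scale correctly under parabolic dilation, $\lambda_*(\mathbf 0,0;\cl{M}_i',\epsilon)=1$ and $\lambda^*(\mathbf 0,0;\cl{M}_i',\epsilon)\to\infty$ by construction, while the fact that $x_i$ realizes $\min_{\ol B_{2R_0}(\mathbf 0)}\lambda_*(\cdot,-1;\cl{M}_i,\epsilon)$ becomes, after rescaling, the statement that $\mathbf 0$ minimizes $y\mapsto\lambda_*(y,0;\cl{M}_i',\epsilon)$ over an exhausting family of balls. Using Lemma \ref{lem curv bounds} and interior estimates for mean curvature flow, the convergence $\cl{M}_i'\to\hat{\cl{M}}$ is smooth on the space-time annular regions where the $\cl{M}_i'$ are uniformly close to cones, and combined with White's local regularity theorem \cite{White05,ST22} this lets each of these properties pass to $\hat{\cl{M}}$:
$$\lambda_*(\mathbf 0,0;\hat{\cl{M}},\epsilon)=1,\qquad \lambda^*(\mathbf 0,0;\hat{\cl{M}},\epsilon)=\infty,$$
and $\lambda_*(\mathbf 0,0;\hat{\cl{M}},\epsilon)\le\lambda_*(y,0;\hat{\cl{M}},\epsilon)$ for every $y\in\R^{n+1}$. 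I expect this to be the main obstacle: ``$\cl{M}$ is \emph{not} $\epsilon$-close to $\cl{C}$'' is an open condition, so its persistence in the limit genuinely requires the uniform curvature bounds of Lemma \ref{lem curv bounds}.

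\smallskip\noindent\emph{Step 2 (regularity and asymptotics).} By item (4) of Lemma \ref{lem ancient limit} and Allard's theorem \cite{Allard72}, $\hat V_0$ is a smooth embedded stationary minimal hypersurface on $A_{\epsilon,\infty}(\mathbf 0)$, and by items (1)--(3) and Huisken's monotonicity formula $\Theta(\hat V_0,\cdot)\le\Theta(\C_0)$ everywhere. The blow-down $\lim_{\lambda\to\infty}\cl{D}_{\lambda^{-1}}\hat{\cl{M}}$ is the static flow of a stationary cone (monotonicity plus item (3)) which is $\epsilon$-close to $\cl{C}$; since $\epsilon\le\epsilon_0$ and $L(\C_0)$ is rigid among minimal submanifolds of $\Sph^n$, it equals some $A_0\C_0\in\cl{C}$. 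As quadratic cones are integrable, \cite[\S7]{Simon83} (applied after inversion) shows $\hat V_0$ has a unique tangent cone at infinity and is asymptotic there, with polynomial decay, to a single leaf $A_0 S_{\lambda_\infty}+a_0$ of the Hardt--Simon foliation of $A_0\C_0$ (for some $a_0\in\R^{n+1}$).

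\smallskip\noindent\emph{Step 3 (classification).} Since $\C_0=\C^{p,q}$ is area minimizing, \cite{HS85} gives the Hardt--Simon foliation $\{\Sigma_\lambda\doteqdot A_0 S_\lambda+a_0\}_{\lambda\in\R}$ of $\R^{n+1}$, with central leaf $\Sigma_0=A_0\C_0+a_0$ and $\Sigma_\lambda$ smooth for $\lambda\ne 0$. Let $f\colon\hat V_0\to\R$ assign to $x$ the parameter of the leaf through $x$; $f$ is continuous and, by Step 2, $f\to\lambda_\infty$ along $\hat V_0$ at infinity. If $f\not\equiv\lambda_\infty$, then $f$ attains a value $\mu\ne\lambda_\infty$, hence (as $f\to\lambda_\infty$ at infinity) attains an extremum $f(p)\ne\lambda_\infty$ at an interior point $p$ where $\hat V_0$ touches the smooth minimal leaf $\Sigma_{f(p)}$ from one side. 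The strong maximum principle for stationary integral varifolds against smooth minimal hypersurfaces (Solomon--White, Ilmanen, Wickramasekera) forces $\hat V_0$ to contain $\Sigma_{f(p)}$ near $p$, and then unique continuation together with $\Theta(\hat V_0,\cdot)\le\Theta(\C_0)$ (which rules out multiplicity $\ge 2$ and extra components, using that $\hat V_0-\Sigma_{f(p)}$ is then stationary with vanishing density ratio at infinity) gives $\hat V_0=\Sigma_{f(p)}$; but $\Sigma_{f(p)}$ is asymptotic to $\Sigma_0$, i.e.\ $\lambda_\infty=0\ne f(p)$, a contradiction. Hence $f\equiv\lambda_\infty$ and $\hat V_0=\Sigma_{\lambda_\infty}$. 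If $\lambda_\infty=0$ then $\hat V_0=\Sigma_0$ is a cone with vertex $a_0$, so $\cl{D}_{\rho^{-1}}(\hat{\cl{M}}-(a_0,0))$ is $\epsilon$-close to $\cl{C}$ for all $\rho>0$ and thus $\lambda_*(a_0,0;\hat{\cl{M}},\epsilon)=0$, contradicting $\lambda_*(\mathbf 0,0;\hat{\cl{M}},\epsilon)=1\le\lambda_*(a_0,0;\hat{\cl{M}},\epsilon)$ from Step 1. Therefore $\lambda_0\doteqdot\lambda_\infty\ne 0$, $M\doteqdot A_0 S_{\lambda_0}+a_0$ is a smooth Hardt--Simon minimal surface, and $\hat\mu_0=\cl{H}^n\mres M$. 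Since $M$ is smooth with unit density, White's theorem makes $\hat{\cl{M}}$ a smooth flow near the $t=0$ slice; as $H\equiv 0$ it is static there, and a standard open--closed argument (nearby time-slices are stationary integral varifolds sandwiched by, hence equal to, $M$, extended both ways in time by White's theorem) gives $\hat\mu_t=\cl{H}^n\mres M$ for all $t\in\R$.

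\smallskip\noindent\emph{Step 4 (borderline closeness).} If $\hat{\cl{M}}$ were $\epsilon'$-close to some $\C\in\cl{C}$ with $0<\epsilon'<\epsilon$, then $M$ would be a $C^{1,\alpha}$-graph over $\C$ on $A_{\epsilon',(\epsilon')^{-1}}\times[-(\epsilon')^{-2},-(\epsilon')^2]$ with norm $\le\epsilon'$; by the parabolic scale-invariance of the $C^{1,\alpha}$-norm, $\cl{D}_{\rho^{-1}}\hat{\cl{M}}$ would then be $\epsilon$-close to $\C$ for all $\rho\in[\epsilon'/\epsilon,\epsilon/\epsilon']$, a range meeting $(0,1)$, forcing $\lambda_*(\mathbf 0,0;\hat{\cl{M}},\epsilon)<1$ and contradicting Step 1. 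Hence no such $\epsilon'$ exists.
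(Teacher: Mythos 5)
There is a genuine gap, and it sits exactly where you flagged it: Step 1. From $\lambda_*(\mathbf 0,0;\cl{M}_i',\epsilon)=1$ and the minimality of $x_i$ you cannot conclude $\lambda_*(\mathbf 0,0;\hat{\cl{M}},\epsilon)=1$ or $\lambda_*(y,0;\hat{\cl{M}},\epsilon)\ge 1$ for all $y$. Under the (locally smooth, by White's theorem) convergence $\cl{M}_i'\rightharpoonup\hat{\cl{M}}$, $\epsilon$-closeness at a fixed scale \emph{passes to} the limit, but non-closeness does not: the approximators may fail to be $\epsilon$-close at scales just below $1$ with $C^{1,\alpha}$-norms $\epsilon+\delta_i$, $\delta_i\to 0$, and the limit is then $\epsilon$-close there. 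So $\lambda_*(\,\cdot\,;\hat{\cl{M}},\epsilon)$ can only be estimated from above, never from below, by the approximators; no amount of curvature bounds or interior estimates repairs this, since the failure is a borderline-norm phenomenon, not a compactness one. This is precisely why the theorem's last assertion is stated for $\epsilon'<\epsilon$ strictly rather than for $\epsilon$ itself. Since your exclusion of the degenerate case $\lambda_\infty=0$ in Step 3 (via $\lambda_*(a_0,0;\hat{\cl{M}},\epsilon)=0$ contradicting limiting minimality) and your Step 4 both rest on the unproved Step 1 identities, the two conclusions of the theorem that actually require work — $\lambda_0\ne 0$ and the non-$\epsilon'$-closeness — are not established.

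The correct mechanism, and the one the paper uses, is to run the contradiction at the level of the smooth flows $\cl{M}_i'$, where the definition of $\lambda_*(X_i;\cl{M}_i)$ is available: if the limit were (after a small translation) exactly a cone flow, or merely $\epsilon'$-close with $\epsilon'<\epsilon$, then White's local regularity gives $C^\infty_{loc}$ convergence away from the vertex, so $\cl{M}_i'$ is $\epsilon''$-close ($\epsilon'<\epsilon''<\epsilon$) for large $i$, and by smoothness the dilations $\cl{D}_{\lambda^{-1}}\cl{M}_i'$ remain $\epsilon$-close for all $\lambda$ in a neighborhood of $1$ — including $\lambda<1$ — contradicting the definition of $\lambda_*(X_i;\cl{M}_i)$. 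Note this argument needs the flow to agree with (or be close to) the cone on a full time interval of length comparable to $\epsilon^{-2}$ before the definition of $\epsilon$-closeness can be invoked, which is why the paper's Claim about $\lambda_i'\ne 0$ is argued for arbitrarily large indices so that $\lambda_{i_0}^2 t_0\le -\epsilon^{-2}-1$; your single-time-slice treatment at $t=0$ does not engage with this, and in fact Lemma \ref{lem ancient limit}(4) only gives graphicality on time intervals $[-\lambda^2\epsilon^{-2},-\lambda^2\epsilon^2]$, never at $t=0$, so your Step 2 regularity and asymptotics claims should also be run on negative time slices as in the paper. The remaining skeleton (blow-down to a cone in $\cl{C}$, classification via the Hardt--Simon foliation and the Solomon--White maximum principle, propagation of the static slice in time) is in the same spirit as the paper, but without a correct replacement for Step 1 the proof does not go through.
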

\begin{proof}	
	It follows from the entropy bound Lemma \ref{lem ancient limit} \eqref{item ancient entropy bound}, compactness of Brakke flows with uniform local area bounds, and Huisken's monotonicity formula that there exists a limiting shrinker $\cl{D}_{\lambda_i^{-1}} \hat{\cl{M}} \rightharpoonup \hat{\cl{M}}_{-\infty}$ along some sequence $\lambda_i \to \infty$.
	Since $\hat{\cl{M}}$ has $H \equiv 0$ (Lemma \ref{lem ancient limit} \eqref{item ancient H bound}),
	the same argument as in the proof of Lemma \ref{lem tangent flow} shows that, for $t < 0$, $\hat{\cl{M}}_{-\infty}$ must be the flow of a stationary cone $\hat{\C}$ (which is dilation invariant with respect to $\mathbf 0 \in \R^{n+1}$).
	Because $\cl{D}_{\lambda_i^{-1}} \hat{\cl{M}}$ is $\epsilon$-close to some $\hat{\C}_{\lambda_i} \in \cl{C}$ for all $i$ (Lemma \ref{lem ancient limit} \eqref{item ancient close to cone}), we can pass to a further subsequence (still denoted by $i$) so that the $\hat{\C}_{\lambda_i}$ converge to $\hat{\C}_{-\infty} \in \cl{C}$ and $\hat{\cl{M}}_{-\infty} = \cl{M}_{\hat \C}$ is $\epsilon$-close to $\hat{\C}_{-\infty} \in \cl{C}$.
	By the dilation-invariance of the cone $\hat{\C}$, it follows that $\hat \C$ can be written \emph{globally} as a $C^{1, \alpha}$-graph over $\hat{\C}_{-\infty}$ with $C^{1,\alpha}$-norm less than or equal to $\epsilon$.

	Consider the Hardt-Simon foliation \eqref{eqn Hardt-Simon foliation} rotated so that $S_0 = \hat{\C}_{-\infty}$.
	If $\epsilon \ll 1$ is sufficiently small depending on $n, \C_0$, then \cite[Theorem 3.1]{ES19} implies that there exists $a \in \R^{n+1}, q \in SO(n+1), \lambda' \in \R$ such that $\hat{\C}$ is a $C^{1, \beta}$-graph over $a + q ( S_{\lambda'} )$ and the graphing function $u$ satisfies an improved decay estimate
		$$\sup_{x \in B_r(a) \cap (a + q( S_{\lambda'} )) } |u(x)| \lesssim r^{1 + \beta} \qquad \forall r \le 1/2.$$
	Because $\hat{\C}$ is dilation-invariant (with respect to $\mathbf 0$), this estimate is only possible if $a = \mathbf 0, \lambda' = 0$, and $u \equiv 0$.
	In other words, $\hat{\C} = q ( S_0) = q ( \hat{\C}_{-\infty} ) \in \cl{C}$.
	Thus, for $t < 0$, $\hat{\cl{M}}_{-\infty} = \cl{M}_{\hat{\C}}$ and $\hat{\C} \in \cl{C}$.

	Because $\cl{D}_{\lambda_i^{-1}} \hat{\cl{M}} \rightharpoonup \hat{\cl{M}}_{-\infty}$ converges as Brakke flows, we can find $t_0 < 0$ and pass to a further subsequence so that the time slices $\hat{V}_i \doteqdot \lambda_i^{-1} \hat{V}_{t_0 \lambda_i^2} \rightharpoonup \hat{\C}$ converge as integral $n$-varifolds.
	For $i \gg 1$, \cite[Theorem 3.1]{ES19} applies and gives that, for all $i \gg 1$, there exist $a_i\in \R^{n+1}$, $q_i \in SO(n+1)$, $\lambda'_i \in \R$ such that $\spt \hat V_i \cap B_{1/2}$ is a $C^{1,\beta}$-graph over $a_i +q_i(S_{\lambda'_i})$.
	Moreover, after renormalizing the Hardt-Simon foliation \eqref{eqn Hardt-Simon foliation} so that $S_0 = \hat{\C}$, 
		$$\lim_{i \to \infty} \left( |a_i| + |q_i - Id | + |\lambda_i'| \right) = 0.$$
	
	\begin{claim} \label{claim lambda_i' not 0}
		$\lambda_i' \ne 0$ for all $i \gg 1$.	
	\end{claim}
	\begin{proof}[Proof of Claim.]
	Consider some index $i$ with $\lambda_i' = 0$.
	Then the monotonicity formula \eqref{simple mono forla eqn} for stationary varifolds implies that
		$$\theta_{\hat V_i - a_i} ( \mathbf 0) = \lim_{r \searrow 0} \frac{ \mu_{\hat V_i - a_i} ( B_r) }{\omega_n r^n}  \ge \theta_{q_i ( S_0) } ( \mathbf 0) = \theta_{\C_0} ( \mathbf 0 ) = \Theta( \cl{M}_{\C_0}, ( \mathbf 0, 0) ).$$
	Theorem \ref{thm blow-ups} then gives that
	\begin{align*}	
		\theta_{\C_0} ( \mathbf 0 )
		&\le \theta_{\hat V_i - a_i} ( \mathbf 0) \\
		&= \theta_{\hat V_i} ( a_i ) \\
		&\le \Theta_{\cl{D}_{\lambda_i^{-1}} \hat{\cl{M}}} ( a_i , t_0) 
		&& ( \text{Theorem \ref{thm blow-ups}} ) \\ %note that in general the time slices lie below the ``good representative" varifold 
		&\le \Theta_{\cl{D}_{\lambda_i^{-1}} \hat{\cl{M}}} (( a_i , t_0), r)
			&& ( \text{Huisken's monotonicity formula \eqref{Huisken's mono forla}, } r >0 ) \\
		&= \Theta_{\hat{\cl{M}}} ( ( \lambda_i a_i, \lambda_i^2 t_0) , \lambda_i r ) \\
		&\le \theta_{\C_0} ( \mathbf 0) 		
			&& ( \text{Lemma \ref{lem ancient limit} \eqref{item ancient entropy bound}}).
	\end{align*}
	Thus, we have equality throughout and 
		$$\Theta_{\hat{\cl{M}}} ( ( \lambda_i a_i, \lambda_i^2 t_0) , r) = \theta_{\C_0} ( \mathbf 0) \qquad \forall r > 0.$$
	It follows from Huisken's monotonicity formula \eqref{Huisken's mono forla} that $\hat{\cl{M}} - ( \lambda_i a_i, \lambda_i^2 t_0 )$ is a shrinker (for $t<0$) and must therefore be equal to the limiting shrinker $\hat{\cl{M}}_{-\infty}$, that is
	\begin{equation} \label{eqn ancient = shrinker}
		\hat{\cl{M}}-( \lambda_i a_i, \lambda_i^2 t_0 ) = \hat{\cl{M}}_{-\infty} = \cl{M}_{\hat{\C}}
		\qquad \text{for } t < 0.
	\end{equation}
	
	Write $\hat{\cl{M}} = ( \hat \mu_t )_{t \in \R}$, and note $\hat{\mu}_t = \mu_{\hat{\C}+\lambda_i a_i}$ for all $t < \lambda_i^2 t_0$ by \eqref{eqn ancient = shrinker}.
	Moreover, Brakke's inequality and the fact that $\hat{\cl{M}}$ has $H \equiv 0$ (Lemma \ref{lem ancient limit} \eqref{item ancient H bound}) implies $\hat{\mu}_{t_1} \ge \hat{\mu}_{t_2}$ for all $t_1 \le t_2$.
	In particular, $\spt \hat{\mu}_{t_2} \subset \spt \hat{\mu}_{t_1}$ for $t_1 \le t_2$.
	
	Let $t \in [\lambda_i^2 t_0 , -\epsilon^2]$ and recall $\hat{\mu}_t$ is represented by a stationary integral $n$-varifold $\hat V_t$ with $H = H_{\hat V_t} = 0$ (Lemma \ref{lem ancient limit} \eqref{item ancient timeslices}).
	Then
		$$\spt \hat V_t = \spt \hat{\mu}_t \subset \spt \hat \mu_{\lambda_i^2 t_0} = \hat{\C} + \lambda_i a_i.$$
	On the other hand, Solomon-White's strong maximum principle \cite{SW89} %(see also the statement by Ilmanen and by Wickramasekera)
	applied to the smooth manifold $(\hat{\C} + \lambda_i a_i ) \setminus \{ \lambda_i a_i \} \subset \R^{n+1} \setminus \{ \lambda_i a_i \}$
	implies that either
		$$(\hat{\C} + \lambda_i a_i) \setminus \{ \lambda_i a_i \} \subset \spt \hat V_t
		\qquad \text{or} \qquad 
		(\hat{\C} + \lambda_i a_i ) \cap  \spt \hat V_t \setminus \{ \lambda_i a_i \} = \emptyset.$$
	In particular, either
		$$\hat \C + \lambda_i a_i = \spt \hat V_t
		\qquad \text{or} \qquad 
		\spt \hat V_t \subset \{ \lambda_i a_i \}$$
	since $\spt \hat V_t \subset \hat \C + \lambda_i a_i$ and $\spt \hat V_t$ is a closed set.
	However, the second case is impossible since $\hat{\cl{M}}$ is $\epsilon$-close to some $\hat \C_{\lambda=1}  \in \cl{C}$ by Lemma \ref{lem ancient limit} \eqref{item ancient close to cone}.
	Thus, $\spt \hat V_t = \hat \C + \lambda_i a_i$ for all $t \in [\lambda_i^2 t_0, -\epsilon^2 ]$.
	By the entropy bounds Lemma \ref{lem ancient limit} \eqref{item ancient entropy bound} and the constancy theorem \cite[Ch. 8, \S 4]{SimonGMT}, it follows that in fact $\hat V_t = \hat{\C} + \lambda_i a_i$ for all $t \in [\lambda_i^2 t_0, -\epsilon^2 ]$.
	Thus, in combination with \eqref{eqn ancient = shrinker}, we have shown
	\begin{equation} \label{eqn ancient = shrinker +}
		\hat{\cl{M}} - ( \lambda_i a_i , 0) = \cl{M}_{\hat \C} \qquad \text{for } t \in [\lambda_i^2 t_0, -\epsilon^2]
	\end{equation}
	for any index $i$ such that $\lambda_i' = 0$.
		
	Suppose for the sake of contradiction that there exist arbitrarily large indices $i$ with $\lambda_i' = 0.$
	Then there exists $i_0$ such that $\lambda_{i_0}' = 0$ and $\lambda_{i_0}^2 t_0 \le - \epsilon^{-2}-1$.	
	Hence, equality \eqref{eqn ancient = shrinker +} holds for $t \in [-\epsilon^{-2}-1, - \epsilon^2]$ and 
	\begin{align*}
		&\cl{D}_{\lambda_*( X_j; \cl{M}_j)^{-1}} ( \cl{M}_j - X_j - ( \lambda_*( X_j) \lambda_{i_0} a_{i_0} , 0 ) ) \\
		={}&  \cl{M}_{j}' - ( \lambda_{i_0} a_{i_0} , 0) \\
		\rightharpoonup{}& \hat{\cl{M}} - ( \lambda_{i_0} a_{i_0} , 0)  && (\text{as } j \to \infty) \\
		={}& \cl{M}_{\hat \C} && ( \text{for } t \in [-\epsilon^{-2}-1, -\epsilon^2] ).
	\end{align*}
	In particular, White's local regularity theorem \cite{White05}
	implies $\cl{M}_j' - ( \lambda_{i_0} a_{i_0} , 0)$ converges to $\cl{M}_{\hat \C}$ in $C^\infty_{loc}( \R^{n+1} \setminus \{ \mathbf 0 \} \times [- \epsilon^{-2}-1, -\epsilon^2])$ as $j \to \infty$.
	Using smoothness of the flows $\cl{M}_j'$, it then follows that, for some large enough $j$, $\cl{D}_{\lambda^{-1}} ( \cl{M}_j' - ( \lambda_{i_0} a_{i_0} , 0) )$ is $\epsilon$-close to $\hat \C$ at $(\mathbf 0, 0)$ for all $\lambda$ in a neighborhood of $1$.
	Equivalently, $\cl{D}_{\lambda^{-1}} ( \cl{M}_j - X_j - ( \lambda_*(X_j) \lambda_{i_0} a_{i_0}, 0))$ is $\epsilon$-close to $\hat \C \in \cl{C}$ at $(\mathbf 0, 0)$ for all $\lambda$ in a neighborhood of $\lambda_*(X_j)$.
	This however contradicts the definitions of $X_j$ and $\lambda_*$, and this contradiction completes the proof of the claim. 
	\end{proof}
	
	With Claim \ref{claim lambda_i' not 0} in hand, $\hat V_i \cap B_{1/2}$ is a $C^{1,\beta}$-graph over a \emph{smooth} minimal surface $a_i + q_i (S_{\lambda'_i})$ ($\lambda_i' \ne 0$) for all $i \gg 1$.
	Interior regularity for stationary varifolds then implies that $\hat V_i \cap B_{1/4}$ is smooth for all $i \gg 1$, and thus so is $\spt \hat \mu_{t_0 \lambda_i^2} \cap B_{\lambda_i /4}$.
	
	Let $t \le - \epsilon^2$ and $R > \epsilon^2$.
	Let $i \gg 1$ be sufficiently large such that $\spt \hat \mu_{t_0 \lambda_i^2} \cap B_{\lambda_i/4}$ is smooth, $t_0 \lambda_i^2 < t$, and $\lambda_i/4 > R$.
	Since $\hat \mu_t \le \hat \mu_{t_0 \lambda_i^2}$, it follows that 
		$$\spt \hat V_t \cap B_R = \spt \hat \mu_t \cap B_R \subset \spt \hat \mu_{t_0 \lambda_i^2} \cap B_R.$$
	Because $\spt \hat \mu_{t_0 \lambda_i^2} \cap B_R$ is smooth, the strong maximum principle \cite{SW89} applies and implies that either
		$$\spt \hat V_t \cap B_R = \spt \hat \mu_{t_0 \lambda_i^2} \cap B_R \qquad \text{or} \qquad \spt \hat V_t \cap B_R = \emptyset.$$
	However, the second case is impossible, since it would imply (together with $\hat \mu_{-\epsilon^2} \le \hat \mu_t$) that $\spt \hat \mu_{-\epsilon^2}\cap B_R = \emptyset$, which contradicts that $\hat{\cl{M}}$ is $\epsilon$-close to some $\hat{\C}_{\lambda = 1} \in \cl{C}$ (Lemma \ref{lem ancient limit} \eqref{item ancient close to cone}).
	Hence, $\spt \hat V_t \cap B_R = \spt \hat \mu_{t_0 \lambda_i^2} \cap B_R$.
	By letting $i,R$, and $t$ vary, it follows that there exists a smooth manifold $M$ such that 
		$$M = \spt \hat \mu_t = \spt \hat V_t \qquad \text{for all } t \le -\epsilon^2.$$
	
	Additionally, $M$ is minimal ($H_M \equiv 0$) and 
		$$\lambda_i^{-1} M \rightharpoonup \hat \C \in \cl{C} \qquad ( \text{as varifolds})$$
	since $\cl{D}_{\lambda_i^{-1}} \hat{\cl{M}} \rightharpoonup \cl{M}_{\hat \C}$ as Brakke flows.
	The proof of \cite[Corollary 3.7]{ES19} holds in this setting and implies that $M$ is necessarily a smooth Hardt-Simon minimal surface, that is $M = q(S_\lambda) + a$ for some $q \in SO(n+1)$, $\lambda \ne 0$ and $a \in \R^{n+1}$.
	It then follows from the constancy theorem \cite[Ch. 8, \S 4]{SimonGMT}
	and the entropy bounds for $\hat{\cl{M}}$ (Lemma \ref{lem ancient limit} \eqref{item ancient entropy bound}) that 
	$\hat V_t = M = q( S_\lambda) + a$ for all $t \le - \epsilon^2$.
	In summary, $\hat{\cl{M}}$ is the stationary flow of a smooth Hardt-Simon minimal surface $M = q( S_\lambda) + a$ for all $t \le -\epsilon^2$.
		
	Using the strong maximum principle \cite{SW89} and the fact that $\hat \mu_{t_1} \ge \hat \mu_{t_2}$ for $t_1 \le t_2$, it can be shown that there exists $T \in [-\epsilon^2, +\infty]$ such that the flow $\hat{\cl{M}} = ( \mu_{\hat V_t} )_{t \in \R}$ has
		$$\hat V_t = \left\{ \begin{aligned}
 			&M && \text{for } t < T, \\
 			&\emptyset && \text{for } t > T.
		 \end{aligned}\right.$$
	Since $\hat{\cl{M}}$ is a limit of smooth flows $\cl{M}_i'$, $\hat{\cl{M}}$ is unit-regular \cite[Theorem 4.2]{SW20} and therefore $T= +\infty$.
	Thus, $\hat{\cl{M}} = ( \mu_{M} )_{t \in \R}$ is the stationary flow of the smooth Hardt-Simon minimal surface $M = q(S_\lambda) + a$.
	In particular, White's local regularity theorem \cite{White05} implies $\cl{M}'_{i}$ converges to $\hat{\cl{M}}$ in $C^\infty_{loc}( \R^{n+1} \times \R )$ as $i \to \infty$.
	
	Finally, suppose for the sake of contradiction that there exists $0 < \epsilon' < \epsilon$ and $\C \in \cl{C}$ such that $\hat{\cl{M}}$ is $\epsilon'$-close to $\C$.
	Since $\cl{M}_i'$ converges to $\hat{\cl{M}}$ in $C^\infty_{loc}( \R^{n+1} \times \R)$, 
	one can take $\epsilon'' \in ( \epsilon', \epsilon)$ and deduce that $\cl{M}_i'$ is $\epsilon''$-close to $\C$ for $i \gg 1$ sufficiently large.
	Since $\cl{M}_i'$ is smooth and $\epsilon'' < \epsilon$, $\cl{D}_{\lambda^{-1}} \cl{M}_i' = \cl{D}_{\lambda^{-1} \lambda_*(X_i; \cl{M}_i)^{-1}} ( \cl{M}_i - X_i) $ must then be $\epsilon$-close to $\C$ for all $\lambda$ in a neighborhood of $1$.
	This, however, contradicts the definition of $\lambda_*(X_i; \cl{M}_i)$.
\end{proof}

Theorem \ref{thm Hardt-Simon limit} and the results of this section complete the proof of Theorem \ref{meta thm 3}.

\appendix

%%%%%%%%%%%%%%%%%%%%%%%%%%%%%%%%%%%%%%%%%%%%%%%%%%%%%%%%%
\section{Varifolds with Generalized Mean Curvature $H \in L^p_{loc}$} \label{appendix Varifolds with H in L^p}
%%%%%%%%%%%%%%%%%%%%%%%%%%%%%%%%%%%%%%%%%%%%%%%%%%%%%%%%%

In this appendix, we collect some standard results for varifolds with generalized mean curvature $H \in L^p_{loc}$ that are cited throughout the article.
For example, the compactness statement Lemma \ref{Lem Compactness} and monotonicity formula Proposition \ref{prop simple mono forla} are given here.

\begin{lem}[Compactness] \label{Lem Compactness}
	Let $2 \le n < N$, $U \subset \R^N$ be open, and $p \in (1, \infty]$.
	The collection of integer rectifiable $n$-varifolds in $U$ with locally uniformly bounded area and locally uniformly bounded generalized mean curvature $H \in L^p_{loc}(U)$ is weakly compact.
	
	In other words, if $V_i$ is a sequence of integral $n$-varifolds in $U$ such that
	\begin{enumerate}
		\item the $V_i$ have locally uniformly bounded area, i.e. %for every compact $K \subset U$
			$$\sup_i \mu_{V_i} ( K ) < \infty \qquad (\forall K \Subset U),$$
			and
		\item the $V_i$ have generalized mean curvature $H_i \in L^p_{loc}(U)$
		with uniform $L^p_{loc}(U)$ bounds, i.e.
		%for every compact $K \subset U$
			$$\sup_i \|  H_i \|_{L^p(K, d \mu_{V_i} ) } < \infty \qquad (\forall K \Subset U),$$
	\end{enumerate}
	then there exists a subsequence $V_{i_j}$ and an integral $n$-varifold $V_\infty$ in $U$ such that 
	$V_{i_j} \rightharpoonup V_\infty$ weakly as varifolds.
	
	Moreover, $V_\infty$ has 
		$$\mu_{V_\infty} (K ) \le \limsup_i \mu_{V_i} (K)
		\qquad (\forall K \Subset U )$$
	and generalized mean curvature $H_\infty \in L^p_{loc}(U)$
	with
		$$\|  H_\infty \|_{L^p ( K , d \mu_{V_\infty} ) } 
		\le \limsup_i \|  H_i \|_{L^p ( K , d \mu_{V_i} ) } \qquad (\forall K \Subset U ).$$
\end{lem}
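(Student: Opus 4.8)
The plan is to deduce this from the classical compactness theorem for integral varifolds with locally bounded first variation, and then to upgrade the conclusion so that the $L^p$ control on the mean curvature is retained. As a first step I would bound the first variation: fix $K \Subset U$, and for $X \in C^1_c(U, \R^N)$ with $\spt X \subset K$ and $p \in (1, \infty)$, H\"older's inequality gives
\[
	|\delta V_i(X)| = \left| \int H_i \cdot X \, d\mu_{V_i} \right| \le \|H_i\|_{L^p(K, d\mu_{V_i})} \, \mu_{V_i}(K)^{1 - 1/p} \, \|X\|_{C^0},
\]
and analogously with exponents $1, \infty$ when $p = \infty$; so hypotheses (1)--(2) furnish a uniform local bound on $\|\delta V_i\|$. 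Combined with the area bound (1), this places the sequence in the scope of Allard's compactness theorem for integral varifolds (\cite{Allard72}; see also \cite[Ch.~8]{SimonGMT}), yielding a subsequence $V_{i_j}$ and an integral $n$-varifold $V_\infty$ in $U$ with $V_{i_j} \rightharpoonup V_\infty$, with $V_\infty$ of locally bounded first variation, with $\mu_{V_{i_j}} \rightharpoonup \mu_{V_\infty}$ as Radon measures (whence the asserted area bound, by lower semicontinuity of mass), and with $\|\delta V_\infty\| \le \liminf_j \|\delta V_{i_j}\|$ in the sense of weak-$*$ lower semicontinuity of total variation.

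The essential step is then to identify $\delta V_\infty$ with the first variation coming from an $L^p_{loc}$ function carrying the stated bound. Since $(x, S) \mapsto \div_S X(x)$ lies in $C^0_c(G(n, U))$ for each $X \in C^1_c(U, \R^N)$, weak varifold convergence gives $\delta V_{i_j}(X) \to \delta V_\infty(X)$, and the uniform local first-variation bound extends this convergence to all $X \in C^0_c(U, \R^N)$; hence $\delta V_\infty$ is the weak-$*$ limit of the $\R^N$-valued Radon measures $-H_{i_j} \mu_{V_{i_j}}$. Next, for $K \Subset U$ and $X \in C^0_c(U, \R^N)$ with $\spt X \subset K$, writing $p' = p/(p-1) < \infty$, I would estimate
\begin{align*}
	|\delta V_\infty(X)|
	&= \lim_j \left| \int X \cdot H_{i_j} \, d\mu_{V_{i_j}} \right|
	\le \limsup_j \left( \int |X|^{p'} \, d\mu_{V_{i_j}} \right)^{1/p'} \|H_{i_j}\|_{L^p(K, d\mu_{V_{i_j}})} \\
	&\le \left( \int |X|^{p'} \, d\mu_{V_\infty} \right)^{1/p'} \limsup_j \|H_{i_j}\|_{L^p(K, d\mu_{V_{i_j}})},
\end{align*}
the last step using $|X|^{p'} \in C^0_c(U)$ and the weak-$*$ convergence $\mu_{V_{i_j}} \rightharpoonup \mu_{V_\infty}$. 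Thus $X \mapsto \delta V_\infty(X)$ is bounded in the $L^{p'}(d\mu_{V_\infty})$-norm on test fields supported in the interior of $K$, so --- using that $C^0_c$ is dense in $L^{p'}$, which is where the hypothesis $p > 1$ (i.e. $p' < \infty$) enters --- the Riesz representation theorem produces $H_\infty \in L^p(d\mu_{V_\infty}; \R^N)$ on the interior of $K$ with $\delta V_\infty(X) = -\int H_\infty \cdot X \, d\mu_{V_\infty}$ and $\|H_\infty\|_{L^p} \le \limsup_j \|H_{i_j}\|_{L^p(K, d\mu_{V_{i_j}})} \le \limsup_i \|H_i\|_{L^p(K, d\mu_{V_i})}$. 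Running this over an exhaustion of $U$ by such compacta and invoking uniqueness of Radon--Nikodym derivatives patches the local functions into a single $H_\infty \in L^p_{loc}(U)$ representing $\delta V_\infty$, satisfying the asserted bound (modulo the standard nicety that one obtains it first with $K$ replaced by a slightly larger compact set, which a limiting argument removes). The case $p = \infty$ is handled identically with $(p', p)$ replaced by $(1, \infty)$.

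I expect the main obstacle to be precisely this second step. Allard's compactness theorem only delivers a limit varifold with \emph{bounded} first variation, and one must verify that the finer structure --- that $\delta V_i$ admits an $L^p$ density with a uniform norm bound --- survives in the limit, along with the sharp constant. The delicate point is that the natural duality estimate pairs $H_{i_j}$ against test fields measured in $L^{p'}$ of the \emph{varying} measures $\mu_{V_{i_j}}$, so the passage to the fixed limit measure $\mu_{V_\infty}$ hinges on the weak-$*$ convergence $\mu_{V_{i_j}} \rightharpoonup \mu_{V_\infty}$ together with density of continuous functions. Everything else reduces to H\"older's inequality and a direct appeal to Allard's integral-varifold compactness theorem.
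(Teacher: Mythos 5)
Your proposal is correct and follows essentially the same route as the paper: reduce to Allard's compactness theorem via the H\"older bound on $\delta V_i$, pass the duality estimate $|\delta V_\infty(X)| \le \bigl(\limsup_i \|H_i\|_{L^p(K,d\mu_{V_i})}\bigr)\bigl(\int |X|^{p'}\,d\mu_{V_\infty}\bigr)^{1/p'}$ to the limit using weak-$*$ convergence of the measures, and then represent $\delta V_\infty$ by an $L^p_{loc}$ density with the stated norm bound. The paper phrases the final representation step via the Radon--Nikodym theorem rather than Riesz representation on $L^{p'}(d\mu_{V_\infty})$, but this is only a cosmetic difference.
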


\begin{proof}
	Allard's compactness theorem \cite{Allard72} gives compactness under the weaker assumption where (2) is replaced by the bound
		$$ \sup_i | \delta V_i (X) | \le C_K \| X \|_{C^0(K)} 
		\qquad \forall X \in C^1_c ( K , \R^N ), \, K \Subset U. $$
	Therefore, there exists a subsequential limit $V_{i_j} \rightharpoonup V_\infty$ 
	such that $V_\infty$ is an integral $n$-varifold 
	with locally finite area and  the weaker property that 
		$$| \delta V_\infty ( X) | \le C_K \| X \|_{C^0} 
		\qquad \forall X \in C^1_c ( K  , \R^N ), \, K \Subset U.$$
	
	However, convergence as varifolds $V_{i_j} \rightharpoonup V_\infty$ implies  
	that for any $K \subset U$ compact and $X \in C^1_c(K, \R^N)$
	\begin{gather} \label{limiting H bound eqn}
	\begin{aligned}
		&| \delta V_\infty ( X) | \\
		={}& \lim_{j \to \infty} 	| \delta V_{i_j} ( X) |		\\
		={}& \lim_{j \to \infty} \left| \int H_{i_j} \cdot X d \mu_{V_{i_j} } \right|	\\
		\le{}& \left \{ 
		\begin{aligned} 
			&\left(  \limsup_{i \to \infty} \| H_i \|_{L^p_{loc} ( K, d \mu_{V_i} ) } \right)  \left( \int |X|^{ \frac{p}{p-1} } d \mu_{V_\infty} \right)^{\frac{p-1}{p}} 
			&& \text{if } p \in (1, \infty),\\
			& \left(  \limsup_{i \to \infty} \| H_i \|_{L^\infty_{loc} ( K, d \mu_{V_i} ) } \right)  \int |X| d \mu_{V_\infty} 
			&& \text{if } p = \infty.
		\end{aligned} \right.
	\end{aligned}
	\end{gather}
%	cf Brian White's mean curvature flow lecture notes
	The Radon-Nikodym theorem then implies that $\delta V_\infty = - H_\infty \in L^p_{loc} ( U, d\mu_{V_\infty} )$.
%	In particular that there's no generalized boundary.
	\eqref{limiting H bound eqn} then also implies
		$$\|  H_\infty \|_{L^p(K, d \mu_{V_\infty} )} 
		\le \limsup_i \|  H_i \|_{L^p ( K , d \mu_{V_i} ) }.$$
\end{proof}

\begin{lem} \label{Lem Checking Convergence}
	Let $2 \le n < N$, $U \subset \R^N$ be open, and $p \in (1, \infty]$.
	Let $(V_i)_{i \in \mathbb{N} \cup \{ \infty \} }$ be a collection of integer rectifiable $n$-varifolds with generalized mean curvature $H_i \in L^p_{loc}(U)$.
	Assume
		$$\sup_{i} \mu_{V_i} (K) + \sup_i \| H_i \|_{L^p(K, d\mu_{V_i} ) } < \infty \qquad (\forall K \Subset U).$$
	
	If 
	\begin{equation} \label{super weak convergence}
		\int f d \mu_{V_i} \xrightarrow{i \to \infty} \int f d \mu_{V_\infty} 
		\qquad \forall f \in C^\infty_c ( \R^N) \text{ with }  f \ge 0,
	\end{equation}
	then $V_i \rightharpoonup V_\infty$ as varifolds as $i \to \infty$.
\end{lem}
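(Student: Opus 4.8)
The plan is to deduce varifold convergence from the convergence of the weight measures by combining the compactness Lemma~\ref{Lem Compactness} with the standard fact that an integer rectifiable varifold is uniquely determined by its weight measure.

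First I would record that the sequence $(V_i)$ has locally uniformly bounded first variations. Indeed, for $K \Subset U$ and $X \in C^1_c(K, \R^N)$, H\"older's inequality gives
\[
|\delta V_i(X)| = \left| \int H_i \cdot X \, d\mu_{V_i} \right| \le \|H_i\|_{L^p(K, d\mu_{V_i})}\, \mu_{V_i}(K)^{(p-1)/p}\, \|X\|_{C^0(K)}
\]
(with the evident modification when $p = \infty$), so the hypotheses bound $\sup_i |\delta V_i(X)|$ by $C_K \|X\|_{C^0(K)}$. Together with the local mass bounds, Lemma~\ref{Lem Compactness} then applies to \emph{every} subsequence of $(V_i)$, producing a further subsequence that converges as varifolds to some integer rectifiable $n$-varifold.

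Next I would argue that it suffices to identify every such subsequential varifold limit with $V_\infty$. If $V_i \not\rightharpoonup V_\infty$ as varifolds, there would be $\phi \in C^0_c(G(n,U))$, a constant $c > 0$, and a subsequence along which $|V_i(\phi) - V_\infty(\phi)| \ge c$; passing to a further subsequence $V_{i_j} \rightharpoonup W$ with $W$ integer rectifiable, the identity $W = V_\infty$ would yield a contradiction. To see $W = V_\infty$: varifold convergence forces weak convergence of the weight measures $\mu_{V_{i_j}} \rightharpoonup \mu_W$, while \eqref{super weak convergence} forces $\int f\, d\mu_{V_{i_j}} \to \int f\, d\mu_{V_\infty}$ for nonnegative $f \in C^\infty_c(\R^N)$. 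Writing a general $f \in C^\infty_c$ as a difference of nonnegative $C^\infty_c$ functions (for instance $f = (g + f) - g$ where $g \in C^\infty_c$, $g \ge 0$, and $g \ge \|f\|_{C^0}$ on $\spt f$) and using linearity, this convergence extends to all $f \in C^\infty_c(\R^N)$, whence $\mu_W = \mu_{V_\infty}$ as Radon measures. Since $W$ and $V_\infty$ are both integer rectifiable $n$-varifolds, each is of the form $(\Gamma, \theta)$ where $\Gamma$ is, up to $\cl{H}^n$-null sets, the set of points at which its weight measure has positive (finite) $n$-density and $\theta$ is that density (see e.g.\ \cite[Ch.~3--4]{SimonGMT}); as the two weight measures coincide, $W = V_\infty$.

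The only substantive input is the last step — uniqueness of an integer rectifiable varifold given its weight measure — which I expect to be the main point to cite carefully. The rest is bookkeeping: the first-variation bound, an invocation of Lemma~\ref{Lem Compactness}, the passage from nonnegative to general test functions, and the standard subsequence argument.
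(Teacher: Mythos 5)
Your proposal is correct and follows essentially the same route as the paper: a contradiction argument using the compactness Lemma~\ref{Lem Checking Convergence}'s hypotheses via Lemma~\ref{Lem Compactness}, identification of the subsequential varifold limit through equality of weight measures, and the fact that an integer rectifiable $n$-varifold is determined by its weight measure. The only cosmetic difference is that the paper first upgrades \eqref{super weak convergence} to weak-$*$ convergence of the measures against all of $C^0_c(U)$ by an explicit splitting-and-mollification argument, whereas you fold this into the standard fact that Radon measures are determined by (nonnegative) smooth compactly supported test functions.
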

\begin{proof}
	We first show $d \mu_{V_i} \rightharpoonup d\mu_{V_\infty}$.
	
	Let $f \in C^0_c ( U )$.
	There exists $\delta > 0$ and a compact set $K \subset U$ such that 
		$$\supp f \subset B_\delta (\supp f) \subset K \subset U$$
	where $B_\delta ( \supp f)$ denotes the radius $\delta$ neighborhood of $\supp f$.
	Denote
		$$ C_K \doteqdot \sup_i \mu_{V_i} ( K ) 	< \infty.$$

	Let $\epsilon > 0$.
	Split $f = f_+ - f_-$ into positive and negative parts.
	Convolving $f_+, f_-$ with a suitable mollifiers, we can find $\tilde f_+, \tilde f_- \in C^\infty_c ( U )$ such that
		$$\tilde f_{\pm} \ge 0, \qquad
		\supp \tilde f_{\pm} \subset K, \qquad \text{ and } \qquad
		\| \tilde f_{\pm}  - f_{\pm} \|_{C^0} < \frac{1}{6C_K} \epsilon .$$
	Then
	\begin{align*}
		& \left| \int f d\mu_{V_i} - \int f d\mu_{V_\infty} \right|	\\
		\le{}& \left| \int f_+ d\mu_{V_i} - \int f_+ d\mu_{V_\infty} \right|	
		+ \left| \int f_- d\mu_{V_i} - \int f_- d\mu_{V_\infty} \right|	\\
		\le{}& \left| \int f_+ - \tilde f_+  d \mu_{V_i} \right|
		+ \left| \int \tilde f_+ d\mu_{V_i} - \int \tilde f_+ d\mu_{V_\infty} \right|
		+ \left| \int f_+ - \tilde f_+ d \mu_{V_\infty} \right|	\\
		&+ \left| \int f_- - \tilde f_-  d \mu_{V_i} \right|
		+ \left| \int \tilde f_- d\mu_{V_i} - \int \tilde f_- d\mu_{V_\infty} \right|
		+ \left| \int f_- - \tilde f_- d \mu_{V_\infty} \right|	\\
		\le{}& 4 \| f - \tilde f \|_{C^0} C_K 
		+ \left| \int \tilde f_+ d\mu_{V_i} - \int \tilde f_+ d\mu_{V_\infty} \right| 	
		+ \left| \int \tilde f_- d\mu_{V_i} - \int \tilde f_- d\mu_{V_\infty} \right| 	\\
		<{}& \frac{2}{3} \epsilon + \left| \int \tilde f_+ d\mu_{V_i} - \int \tilde f_+ d\mu_{V_\infty} \right| 	
		+ \left| \int \tilde f_- d\mu_{V_i} - \int \tilde f_- d\mu_{V_\infty} \right| 	\\
		<{}& \epsilon 		&& ( \forall i \gg 1)
	\end{align*}
	where the last inequality follows by assumption \eqref{super weak convergence}.
	This completes the proof that $d \mu_{V_i} \rightharpoonup d \mu_{V_\infty}.$
	
	Next, we prove the varifold convergence $V_i \rightharpoonup V_\infty$.
	Suppose for the sake of contradiction that $V_i \not \rightharpoonup V_\infty$.
	Then there exists $f \in C^0_c ( G(n, U ) )$, $\epsilon > 0$, 
	and a subsequence $V_{i_j}$ such that 
		$$\left | \int f dV_{i_j} - \int f d V_\infty \right | > \epsilon \qquad \forall j.$$
	Since the $V_{i_j}$ have locally uniformly bounded areas and locally uniformly bounded generalized mean curvatures $H_{i_j} \in L^p_{loc}(U)$,
	Lemma \ref{Lem Compactness} implies there exists an integer rectifiable $n$-varifold $V_\infty'$
	and a subsequence still denoted $V_{i_j}$ such that $V_{i_j} \rightharpoonup V_\infty'$.
	In particular,
		$$d\mu_{V_\infty} = \lim_{j \to \infty} d\mu_{V_{i_j} } = d\mu_{V_\infty'}.$$
	Because $V_{\infty}, V_\infty'$ are integer rectifiable $n$-varifolds with $d\mu_{V_\infty} = d \mu_{V_\infty' }$,
	$V_\infty = V_\infty'$. 
	We then have a contradiction that $V_{i_j} \not \rightharpoonup V_\infty = V_\infty'$ and $V_{i_j} \rightharpoonup V_\infty' = V_\infty.$
	This contradiction proves that in fact $V_i \rightharpoonup V_\infty$.
\end{proof}

\begin{prop}[Monotonicity formula] \label{prop simple mono forla}
	Let $2 \le n < N$, $U \subset \R^N$ open, and $p \in ( n, \infty]$.
	Let $V$ be an integer rectifiable $n$-varifold in $U$ with generalized mean curvature $H \in L^p_{loc}(U)$.
	Let $x_0 \in U$ and $\ol{B_R(x_0)} \subset U$.
	Then for any $0 < \sigma < \rho < R$
	\begin{multline} \label{simple mono forla eqn}
		\left(  e^{ \frac{ \| H \| }{ 1 - \frac{n}{p} } \rho^{1 - \frac{n}{p} } }
		 \frac{ \mu_V ( B_\rho( x_0) ) }{ \rho^n }
		 + e^{ \frac{ \| H \| }{ 1 - \frac{n}{p} } \rho^{1 - \frac{n}{p} } } - 1 \right) \\
		- 
		\left(  e^{ \frac{ \| H \| }{ 1 - \frac{n}{p} } \sigma^{1 - \frac{n}{p} } }
		 \frac{ \mu_V ( B_\sigma( x_0) ) }{ \sigma^n }
		 + e^{ \frac{ \| H \| }{ 1 - \frac{n}{p} } \sigma^{1 - \frac{n}{p} } } - 1 \right) 	\\
		 \ge 
		\int_{B_\rho (x_0) \setminus B_\sigma (x_0) } \frac{ | ( x - x_0)^\perp |^2 }{ |x-x_0|^{n+2} } d \mu_V
		\ge 0 
	\end{multline}
	where $\| H \| = \| H \|_{L^p( B_R(x_0) , d\mu_V ) }$.
	
	In particular,
		$$e^{ \frac{ \| H \| }{ 1 - \frac{n}{p} } \rho^{1 - \frac{n}{p} } }
		 \frac{ \mu_V ( B_\rho( x_0) ) }{ \rho^n }
		 + e^{ \frac{ \| H \| }{ 1 - \frac{n}{p} } \rho^{1 - \frac{n}{p} } } - 1
		 \text{ is non-decreasing in } \rho$$
	and
		$$\theta_V (x_0 ) \doteqdot \lim_{\rho \searrow 0 } \frac{ \mu_V ( B_\rho ( x_0) ) }{ \rho^n} 
		\text{ exists.}$$
\end{prop}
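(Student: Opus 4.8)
The plan is to derive the monotonicity formula in the standard way from the first variation identity, carefully tracking the $L^p$ error term coming from the generalized mean curvature. First I would fix $x_0 \in U$ and, after translating, assume $x_0 = \mathbf 0$. For $0 < \rho < R$, I apply the first variation identity $\delta V(X) = -\int H \cdot X\, d\mu_V$ to the radial vector field $X(x) = \gamma(|x|/\rho)\, x$ where $\gamma$ is a smooth cutoff approximating $\mathbf 1_{[0,1]}$; passing to the limit $\gamma \to \mathbf 1_{[0,1]}$ produces the identity
\begin{equation*}
	\frac{d}{d\rho}\left( \frac{\mu_V(B_\rho)}{\rho^n} \right) = \frac{d}{d\rho} \int_{B_\rho} \frac{|x^\perp|^2}{|x|^{n+2}}\, d\mu_V \ - \ \rho^{-n-1}\int_{B_\rho} x \cdot H \, d\mu_V
\end{equation*}
in the distributional sense (this is the classical computation, e.g. \cite[Ch. 4, \S 4]{SimonGMT}). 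The key point is to estimate the last term: by H\"older's inequality with exponents $p$ and $p/(p-1)$, together with $|x| \le \rho$ on $B_\rho$,
\begin{equation*}
	\left| \rho^{-n-1} \int_{B_\rho} x \cdot H\, d\mu_V \right| \le \rho^{-n} \| H\|_{L^p(B_R, d\mu_V)} \, \mu_V(B_\rho)^{\frac{p-1}{p}} = \| H\| \, \rho^{-n/p} \left( \frac{\mu_V(B_\rho)}{\rho^n} \right)^{\frac{p-1}{p}},
\end{equation*}
and since $p > n$ the factor $\rho^{-n/p}$ is integrable near $0$.

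Next I would convert this differential inequality into the stated integrated form. Write $f(\rho) = \mu_V(B_\rho)/\rho^n$ and $g(\rho) = e^{\frac{\|H\|}{1-n/p}\rho^{1-n/p}}$, noting $g'(\rho) = \|H\| \rho^{-n/p} g(\rho)$ and $g \ge 1$. The differential inequality above gives, for a.e.\ $\rho$,
\begin{equation*}
	f'(\rho) \ge -\|H\|\rho^{-n/p} f(\rho)^{\frac{p-1}{p}} + \frac{d}{d\rho}\int_{B_\rho} \frac{|x^\perp|^2}{|x|^{n+2}} d\mu_V \ge -\|H\|\rho^{-n/p}(f(\rho)+1) + (\text{monotone term})'
\end{equation*}
using $f^{(p-1)/p} \le f + 1$. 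Then one checks that $\frac{d}{d\rho}\big( g(\rho) f(\rho) + g(\rho) \big) = g'(\rho)(f(\rho)+1) + g(\rho) f'(\rho) \ge g(\rho) \cdot (\text{monotone term})' \ge (\text{monotone term})'$ since $g \ge 1$ and the monotone term's derivative is nonnegative. Integrating from $\sigma$ to $\rho$ yields exactly \eqref{simple mono forla eqn}. The "in particular" statements follow immediately: monotonicity of $\rho \mapsto g(\rho)f(\rho) + g(\rho) - 1$ is the integrated inequality with the nonnegative right-hand side discarded, and since $g(\rho) \to 1$ as $\rho \searrow 0$, the monotone quantity has a limit as $\rho \searrow 0$, hence so does $f(\rho) = \mu_V(B_\rho)/\rho^n$, giving existence of $\theta_V(x_0)$.

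The main obstacle — really the only subtle point — is making the cutoff/limiting argument rigorous: the first variation identity only holds for $C^1_c$ vector fields, so the radial field $x \mapsto \mathbf 1_{B_\rho}(x)\, x$ must be approached through smooth cutoffs, and one must justify that the resulting inequalities pass to the limit and hold for a.e.\ $\rho$ (the set of $\rho$ for which $\mu_V(\partial B_\rho) = 0$ is co-countable, which suffices). This is entirely standard and I would simply cite \cite[Ch. 4, \S 4]{SimonGMT} and \cite[Ch. 8, \S 5]{SimonGMT} for the technical details, emphasizing only the modification needed to accommodate $H \in L^p_{loc}$ rather than bounded $H$, namely the H\"older estimate above and the observation that $p > n$ makes $\rho^{-n/p}$ integrable.
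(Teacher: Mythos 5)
Your proposal is correct and follows essentially the same route as the paper's proof: the first variation identity with radial cutoff fields, the H\"older estimate yielding the $\|H\|\rho^{-n/p}$ factor, the elementary bound $a^{(p-1)/p}\le 1+a$, and the integrating factor $e^{\frac{\|H\|}{1-n/p}\rho^{1-n/p}}$, integrated from $\sigma$ to $\rho$. The only difference is presentational: the paper keeps the smooth cutoff $\phi_\epsilon$ explicit throughout and sends $\epsilon\searrow 0$ at the very end, whereas you pass to the sharp cutoff first and work with the distributional identity, which is the same standard argument.
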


\begin{proof}
	The following proof follows \cite[Ch. 4, \S 4]{SimonGMT}.
	We provide the proof for $p \in (n, \infty)$ and let the reader make the necessary adjustments to the proof in the case of $p = \infty$.
	
	For any $0 < \rho < R/ ( 1+ \epsilon ) $, 
	\begin{equation} \label{simple mono proof 1}
	\frac{ d}{d \rho} ( \rho^{-n} I (\rho ) ) = \rho^{-n}  \frac{d}{d\rho}J(\rho) - \rho^{-n} \int \rho^{-1} ( x - x_0) \cdot H \phi_\epsilon ( | x - x_0| / \rho ) d \mu_V(x)
	\end{equation}
	where $\phi_\epsilon : [0, \infty) \to \R_{\ge 0}$ is a smooth, non-increasing function with $\phi_\epsilon (s) \equiv 1$ for $s \in [0, 1]$ and $\supp \phi_\epsilon \subset [0, 1 + \epsilon]$,
	\begin{gather*}
		x_0 \in U \text{ and } \ol{B_{R  } (x_0)} \subset U,	\\
		I(\rho) \doteqdot \int \phi_\epsilon ( | x - x_0| / \rho ) d \mu_V(x)	\ge 0, \text{ and} \\
		J(\rho) \doteqdot \int \frac{ | ( x- x_0)^\perp |^2 }{ | x - x_0 |^2} \phi_\epsilon ( | x - x_0|/ \rho ) d \mu_V(x) \ge 0 .
	\end{gather*}
	The integral on the right-hand side of \eqref{simple mono proof 1} can be estimated by
	\begin{align*}
		& \left| \rho^{-n} \int \frac{ x - x_0}{\rho } \cdot H \phi_\epsilon \left( \frac{ | x - x_0 |}{\rho} \right) d\mu_V (x) \right|	\\
		\le{}& ( 1 + \epsilon ) \rho^{-n} \| H \|_{L^p (  B_{(1 + \epsilon ) \rho } (x_0 ) , d\mu_V )} ( I (\rho) )^{1 - \frac{1}{p} }	\\
		\le{}&  ( 1 + \epsilon ) \rho^{-n/p} \| H \|_{L^p (  B_{R } (x_0 ) , d\mu_V )} ( \rho^{-n} I (\rho) )^{1 - \frac{1}{p} }	
		&& \left( 0 < \rho < \frac{R}{1 + \epsilon} \right)\\
		\le{}& ( 1 + \epsilon ) \rho^{-n/p} \| H \|_{L^p (  B_{R } (x_0 ) , d\mu_V )} (1 +  \rho^{-n} I (\rho) )	
	\end{align*}
	where in the last step we used $a \ge 0 \implies a^{1 - 1/p} \le 1 + a$.
	Inserting this estimate into \eqref{simple mono proof 1} yields
	\begin{multline}
		\label{simple mono proof 2}
		\frac{ d}{d \rho} ( 1 + \rho^{-n} I (\rho ) ) \ge \rho^{-n}  \frac{d}{d\rho}J(\rho) 
		- ( 1 + \epsilon ) \rho^{-n/p} \| H \|_{L^p (  B_{R } (x_0 ) , d\mu_V )} (1 +  \rho^{-n} I (\rho) )	\\
		  \forall 0 < \rho < R / ( 1 + \epsilon ) .
	\end{multline}
	To simplify the notation, we write $\| H \| = \| H \|_{L^p( B_R(x_0), d \mu_V )}$ for the remainder of the proof.
	Multiplying \eqref{simple mono proof 2} by the integrating factor
		$$F_\epsilon(\rho) \doteqdot e^{ \int_0^\rho ( 1 + \epsilon) \| H \| \tilde \rho^{- n/p}  d \tilde \rho } 
		= e^{\frac{ ( 1 + \epsilon ) \| H \|}{ 1 - n/p} \rho^{1 - n/p}} \ge 1$$
	and using the fact that $\frac{d}{d \rho} J \ge 0$ 
	yields
	$$\frac{ d}{ d \rho} \left( F_\epsilon( \rho ) \rho^{-n} I(\rho) + F_\epsilon( \rho ) - 1 \right) 
	\ge F_\epsilon( \rho ) \rho^{-n} \frac{d}{ d \rho} J
	\ge  \rho^{-n} \frac{d}{ d \rho} J.$$
	Integrating from $\sigma$ to $\rho$ then gives
		$$\left(F_\epsilon ( \rho ) \frac{ I_\epsilon ( \rho ) }{ \rho^n} + F_\epsilon ( \rho ) - 1 \right)
		- \left(F_\epsilon ( \sigma ) \frac{ I_\epsilon ( \sigma ) }{ \sigma^n} + F_\epsilon ( \sigma ) - 1 \right)
		\ge \int_\sigma^\rho \tilde \rho^{-n} \frac{ d}{ d \tilde \rho } J d \tilde \rho .$$
	Using that $\frac{ d}{d \rho} \left(  \phi_\epsilon ( | x - x_0 |/ \rho ) \right)$ is supported on the region $\rho \le | x - x_0 | \le ( 1 + \epsilon ) \rho$,
	it follows that
	\begin{align*}
		\int_\sigma^\rho \tilde \rho^{-n} \frac{ d}{ d \tilde \rho } J d \tilde \rho 
		&= \int_\sigma^\rho \int \frac{  | ( x - x_0)^\perp|^2}{ | x - x_0|^2 } \tilde \rho^{-n} 
		\frac{d}{ d \tilde \rho} \left( \phi_\epsilon ( |x -x_0| / \rho ) \right) \, d\mu_V d \tilde \rho	\\
		&\ge \int_\sigma^\rho \int \frac{  | ( x - x_0)^\perp|^2}{ | x - x_0|^{n+2} } 
		\frac{d}{ d \tilde \rho} \left( \phi_\epsilon ( |x -x_0| / \rho ) \right) \, d\mu_V d \tilde \rho	\\
		&= \int \frac{  | ( x - x_0)^\perp|^2}{ | x - x_0|^{n+2} } 
		\left( \phi_\epsilon ( |x -x_0| / \rho ) - \phi_\epsilon ( |x -x_0| / \sigma )  \right) \, d\mu_V \\
		&\ge \int_{B_\rho(x_0) \setminus B_{(1 + \epsilon) \sigma} ( x_0) }  \frac{  | ( x - x_0)^\perp|^2}{ | x - x_0|^{n+2} }  \, d\mu_V.
	\end{align*}
	Letting $\epsilon \searrow 0$ finally yields
	\begin{multline*}
		\left(  e^{ \frac{ \| H \| }{ 1 - \frac{n}{p} } \rho^{1 - \frac{n}{p} } }
		 \frac{ \mu_V ( B_\rho( x_0) ) }{ \rho^n }
		 + e^{ \frac{ \| H \| }{ 1 - \frac{n}{p} } \rho^{1 - \frac{n}{p} } } - 1 \right) \\
		- 
		\left(  e^{ \frac{ \| H \| }{ 1 - \frac{n}{p} } \sigma^{1 - \frac{n}{p} } }
		 \frac{ \mu_V ( B_\sigma( x_0) ) }{ \sigma^n }
		 + e^{ \frac{ \| H \| }{ 1 - \frac{n}{p} } \sigma^{1 - \frac{n}{p} } } - 1 \right) 	\\
		 \ge 
		\int_{B_\rho (x_0) \setminus B_\sigma (x_0) } \frac{ | ( x - x_0)^\perp |^2 }{ |x-x_0|^{n+2} } d \mu_V
		\ge 0 .
	\end{multline*}
	 
	 In particular, we have monotonicity of 
	 $$ \rho \mapsto e^{ \frac{ \| H \| }{ 1 - \frac{n}{p} } \rho^{1 - \frac{n}{p} } } \frac{ \mu_V ( B_\rho ( x_0) ) }{\rho^{n}} + e^{ \frac{ \| H \| }{ 1 - \frac{n}{p} } \rho^{1 - \frac{n}{p} } } - 1.$$
	 Hence, its limit as $\rho \searrow 0$ exists and thus $\theta_V(x_0)$ is well-defined.
\end{proof}

%BIBLIOGRAPHY
\bibliographystyle{alpha}
\bibliography{bddHFlows}

\end{document}